	\newtheorem{theorem}{Theorem}[section]
	\newtheorem{corollary}[theorem]{Corollary}
	\newtheorem{lemma}[theorem]{Lemma}
	\newtheorem{proposition}[theorem]{Proposition}
	\newtheorem{defn}[theorem]{Definition}
	\numberwithin{equation}{section}
	\newcommand{\Bound}{\partial \Omega}
	\date{}
\begin{document}
		\author{Ivan Pombo}
		\address{ Department of Mathematics\\
			University of Aveiro\\
			Campus Universit\'ario de Santagio\\
			3810-193 Aveiro, Portugal}
		\email{ivanpombo@ua.pt}
		\thanks{Department of Mathematics, Aveiro University, Aveiro 3810, Portugal.  This work was supported by Portuguese funds through CIDMA -
			Center for Research and Development in Mathematics and Applications and the Portuguese Foundation for Science and Technology 
			(``FCT--Funda\c{c}\~{a}o para a Ci\^{e}ncia e a Tecnologia''), within project UID/MAT/04106/2019}
		
		\title{Reconstruction from Boundary Measurements complex conductivities}
		
		\begin{abstract}
			In this paper we show that following Nachman's method we can still reconstruct complex conductivities in $C^{1,1}$ from its Dirichlet-to-Neumann map in three and higher dimensions. For such, we analyze all of the results in \cite{Nachman} and pinpoint what really needs to be shown for complex conductivities. Moreover, we also obtain low frequency estimates for $C^{1,1}$-boundaries following the approach in  \cite{Knudsen_3D}. As far as we aware, this is the first reconstruction procedure for complex conductivities, even though the proof follows trivially by extending some of Nachman's theorems to the complex case.
		\end{abstract}
		
		\maketitle
		
		\textbf{Key words:}
		inverse conductivity problem, ill-posed problem, complex conductivity, direct reconstruction method
		
		\section{Introduction}
		
		In Electrical Impedance Tomography (EIT) we determine the interior impedance inside a bounded domain $\Omega$ by applying  alternating electrical currents and measuring the corresponding voltages at the boundary $\Bound$, or vice-versa. Impedance is the inverse of admittance which is defined through $\gamma = \sigma + i\omega\epsilon$, where $\omega$ is the angular frequency, $\sigma,\, \epsilon$ are the electrical conductivity and permittivity of materials inside $\Omega$, respectively. 
		
		Our working assumptions are:
		\begin{align}
			\label{condition_1}&\gamma \in L^{\infty}(\bar{\Omega}) \; \text{ and isotropic,}\quad \sigma \geq c > 0, \quad \epsilon\geq 0, \quad \omega\in\mathbb{R^+},\\
			\label{condition_2}& \Omega \;\text{ is a }\; C^{\,1,1}\,-\text{domain in } \mathbb{R}^n,\, n\geq 3 			
		\end{align} 
		
		In applications, most data acquisition systems and corresponding algorithms just focus on computing the conductivity $\sigma$. However, in certain applications it is highly valuable to also obtain the permittivity from boundary measurements. It brings extra knowledge to the table and allows the distinction of new clinical conditions that are not possible with just the conductivity. 
		An example, it is the ability to distinguish between pneumothorax and hyperinflation. Both scenarios correspond to regions of low resistivity, which implies high conductivity, but the pneumothorax has zero permittivity while the hyperinflation corresponds to low yet positive permittivity. Other application will be in multi-frequency EIT since the properties $\sigma$ and $\epsilon$ actually vary with the applied angular frequency $\omega$, while in the real case the frequency is somewhat discarded. 
		
		Mathematically, the direct problem is modeled by assuming we have a voltage $f\in H^{1/2}(\Bound)$ set at the boundary and the objective is to find the electrical potential $u$ which is the unique solution in $H^1(\Omega)$ of:
		
		\begin{align}\label{EIT}
			\begin{cases}
			\nabla\cdot(\gamma\nabla u) = 0,\, \text{ in } \Omega \\
			\left.u\right|_{\Bound} = f
			\end{cases}
		\end{align}
		
		 Uniqueness in $H^1(\Omega)$ holds from the fact that $\text{Re }\gamma > 0$, which implies by the weak formulation that $0$ is not a Dirichlet eigenvalue of the operator $\nabla\cdot(\gamma\nabla u)$ in $\Omega$. 
		 
		 Formally, from each voltage $f\in H^{1/2}(\Bound)$ and each corresponding electrical potential $u\in H^1(\Omega)$ we can define the electrical current measured at the boundary by  $\gamma\frac{\partial u}{\partial\nu}$. To be precise, for $\gamma \in L^{\infty}(\Omega)$ by:		
		\begin{align}\label{DtN_map}
			\Lambda_{\gamma}: H^{1/2}(\Bound)&\rightarrow H^{-1/2}(\Bound),\\
			\nonumber f\quad&\mapsto \quad 
			\langle \Lambda_{\gamma}f,g\rangle = \int_{\Omega} \gamma\nabla u\cdot\nabla v\,dx	
		\end{align}
		where $v\in H^1(\Omega)$ has trace $g$ in $\Bound$.
		
		In 1980 A.P. Calderón \cite{Calderon} was the first to pose the mathematical problem of whether the conductivity $\sigma\in L^{\infty}(\Omega)$ can be uniquely determined by boundary measurements, $\Lambda_{\gamma}$, and if so how to reconstruct it. He showed that the linearized problem at constant conductivities has a unique solution. This problem is well-known in the literature as the Calderón problem or inverse conductivity problem. In medical imaging the problem is know by EIT. 

		After the initial work of Calderón there were many extensions to global uniqueness results. In \cite{Sylvester_Uhlmann}, Sylvester and Uhlmann used ideas of scattering theory, namely the exponential growing solutions of Faddeev \cite{Faddeev} to obtain global uniqueness in dimensions $n\geq 3$ for smooth conductivities. Using this foundations the uniqueness for lesser regular conductivities was further generalized for dimensions $n\geq 3$ in the works of (\cite{Nach_Syl_Uhl}, \cite{Alessandrini}, \cite{Nachman}, \cite{Chanillo}, \cite{Brown}, \cite{PPU}). Currently, the best know result is due to Brown and Torres \cite{Brown_Torres} for conductivities $\gamma\in W^{3/2, p}(\Omega)$ with $p>2n$. The reconstruction procedure for $n\geq 3$ was obtained in both \cite{Nachman} and \cite{Novikov} independently. In dimension two the problem seems to be of a different nature and tools of complex analysis were used to established uniqueness. Nachman \cite{Nachman2D} obtained uniqueness and a reconstruction method for conductivities with two-derivatives. The uniqueness result was soon extend for once-differentiable conductivities in \cite{Brown_Uhlmann} and a corresponding reconstruction method was obtained in \cite{Knudsen_Tamasan}. In 2006, Astala and Päivärinta gave a positive answer for $\sigma\in L^{\infty}(\Omega),\, \sigma\geq c>0$ in \cite{Astala_Paivarinta}.
		
		The first extension to admittances, and here forward also designated by complex-conductivities, was made in \cite{Francini}. In this paper, Francini extended the work of Brown and Uhlmann \cite{Brown_Uhlmann} in two-dimensions by proving uniqueness for small angular frequencies and $\gamma \in W^{2,\infty}$. Afterwards, Bukgheim  influential paper \cite{Bukgheim} proved the general result in two-dimensions for conductivities in $W^{2,\infty}$. He reduced the  (\ref{EIT}) to a Schrödinger equation and shows uniqueness through the stationary phase method (based on is work many extensions followed \cite{Imanuvilov_Yamamoto}, \cite{Novikov_Santacesaria}, \cite{Astala_Faraco_Rodgers}). Recently, by mixing techniques of \cite{Brown_Uhlmann} and \cite{Bukgheim}, Lakshtanov, Tejero and Vainberg obtained in \cite{Lakshtanov_Tejero_Vainberg} uniqueness for once-differentiable complex-condutivities. In \cite{Pombo}, the author followed up their work to show that it is possible to reconstruct complex-conductivity with a jump at least in a certain set of points.\\ 
		
		As far as we know, there is not many results in higher-dimensions presented in literature for complex-conductivities. However, as stated in \cite{Borcea}, it is possible to extend the uniqueness results obtained in \cite{Sylvester_Uhlmann} and  \cite{Nach_Syl_Uhl}, thus uniqueness holds for twice-differentiable complex-conductivities. In terms of reconstruction, there is no reference for the extension of Nachman D-bar procedure \cite{Nachman} to the complex case or of any other direct reconstruction methods. However, in \cite{Hamilton_3D} Nachman's reconstruction is used to compute complex conductivities from boundary measurements, which are in fact promising results that led us to formalize the arguments.
		\\
		
		In this paper, we show that Nachman's reconstruction method holds for complex conductivities. We follow closely the proof in \cite{Nachman} and show where the proof requires other results including complex-conductivities. Moreover, we show that the complex-conductivity can be obtained from low-frequency asymptotics through the exponential growing solutions, following the results obtained in \cite{Knudsen_3D}. To keep in Nachman's setting our approach just requires that the boundary is $C^{1,1}$. For completeness sake, we present the most important proofs of Nachman and pinpoint how to generalize them for complex-conductivities.
		
		\section{Uniqueness of Schrödinger Inverse Problem}
		
		We start from scratch and present the uniqueness result in \cite{Nach_Syl_Uhl} for complex-valued potentials in $L^{\infty}(\bar{\Omega})$. In their work, there is no mention and need of $q$ being real, therefore we present their proof in its entire.
		\\
		
		As is well-known from the literature, we reduce our problem to an analogous one with the Schrödinger operator $(-\Delta+q)$. If $0$ is not a Dirichlet eigenvalue of this operator in $\Omega$, then $\Lambda_q$ is well-defined from $H^{1/2}(\Bound)$ to $H^{-1/2}(\Bound)$ and formally is given by
		$$\Lambda_qf = \left.\frac{\partial w}{\partial\nu}\right|_{\Bound},$$ for $w$ the unique solution of $(-\Delta+q)w=0, \text{ in } \Omega$ and $\left. w\right|_{\Bound}=f$. The corresponding inverse problem will be to determine $q$ from the boundary measurements $\Lambda_q$.
		\\
		 
		Let $u\in H^1(\Omega)$ be the unique solution of (\ref{EIT}) with trace $f\in H^{1/2}(\Bound)$. Then the substitution $u=\gamma^{-1/2}w$ yields
		
		\begin{align}\label{Schrodinger_Inside}
			-\Delta w+qw=0, \text{ in } \Omega, \text{ with } q=\frac{\Delta \gamma^{1/2}}{\gamma^{1/2}}.
		\end{align}
		
		Notice that if $\gamma\in C^{1,1}(\Omega)$ and $\sigma\geq c>0$ then $\gamma^{1/2}$ is well-defined and twice-differentiable. Therefore, $q$ is well-defined and in $L^{\infty}(\Omega)$. Furthermore, the above conditions on the complex-conductivity imply that $0$ is not a Dirichlet eigenvalue of $\nabla\cdot(\gamma\nabla )$ and therefore $0$ is also not a Dirichlet eigenvalue of the Schrödinger operator $(-\Delta+q)$ for $q$ given as in (\ref{Schrodinger_Inside}).
		
		Now, we can focus on studying the Schrödinger equation. By extending $q$ to zero outside the domain we can study solutions of
		\begin{align}\label{Sch_eq1}
			-\Delta w+qw=0, \;\text{ in } \mathbb{R}^n
		\end{align}
		
		which behave like $$w=e^{ix\cdot\zeta}\left(1+\psi(x,\zeta)\right), \text{ for } \zeta \in \mathbb{C}^n,\; \zeta\cdot\zeta=0.$$ In Calderón paper \cite{Calderon} he already uses the family of exponential harmonic functions, $e^{ix\cdot\zeta}$ in its proof, but was Sylvester and Uhlmann \cite{Sylvester_Uhlmann} that first used this type of solutions to dispense the requirement of $\sigma$ to be close to a constant. Applying the substitution to (\ref{Sch_eq1}), it follows that $\psi$ must satisfy:
		\begin{align}\label{Faddeev_eq1}
			-\Delta\psi-2i\zeta\cdot\nabla\psi+q\psi = -q
		\end{align} 
		
		It is well-know that for $\zeta\in\mathbb{C}^n,\, \zeta\cdot\zeta=0$ the operator $(-\Delta-2i\zeta\cdot\nabla)$ has the fundamental solution:
		\begin{align}\label{g_zeta}
			g_{\zeta}(x) = \frac{1}{(2\pi)^n}\int_{\mathbb{R}^n} \frac{e^{ix\cdot\xi}}{|\xi|^2+2\zeta\cdot\xi}\,d\xi,
		\end{align}
		
		which leads to $G_{\zeta}(x) = e^{ix\cdot\zeta}g_{\zeta}(x)$ being a fundamental solution of the Laplace operator in $\mathbb{R}^n$: $$-\Delta G_{\zeta}(x)=\delta(x).$$ A quick remark is that this operator differs from the classical fundamental solution by an Harmonic function $H_{\zeta}$.
		
		From this, the appropriate solutions to (\ref{Sch_eq1}) can be obtained by solving the integral equation
		\begin{align}\label{Schr_int_eq1}
			w(x,\zeta)= e^{ix\cdot\zeta} - \int G_{\zeta}(x-y)q(y)w(y,\zeta)\,dy.
		\end{align}
		with $\psi$ solving
		\begin{align}\label{Schr_int_eq2}
			\psi+g_{\zeta}\ast(q\psi) = -g_{\zeta}\ast(q)
		\end{align}
		
		The study of these integral equations follows by a weighted $L^2$ estimate for $g_{\zeta}$ obtained in \cite{Sylvester_Uhlmann}, which guarantees unique solvability of (\ref{Schr_int_eq1}) for $|\zeta|$ large, even for complex conductivities.
		
		Let $\langle x\rangle = (1+|x|^2)^{1/2}$. We define the weighted $L^2$-space for $\delta\in\mathbb{R}$ as
		
		$$L^2_{\delta}(\mathbb{R}^n):=\left\{f: \|f\|_{\delta}:= \|\langle x\rangle^{\delta}f\|_{L^2(\mathbb{R}^n)}<\infty\,\right\}.$$
		
		\begin{proposition}\label{conv_est}
			For all $\zeta\in\mathbb{C}^n$ with $\zeta\cdot\zeta=0$ and $|\zeta|\geq a$ the operator of convolution with $g_\zeta$ satisfies
			\begin{align}
			\left\|g_{\zeta}\ast f\right\|_{\delta-1}\leq \frac{c(\delta, a)}{|\zeta|}\|f\|_{\delta},\quad \text{ for }\; 0<\delta<1
			\end{align}
			Moreover, let $H^2_{\delta}(\Omega):=\left\{f: D^{\alpha}f\in L^2_{-\delta}(\mathbb{R}^n), 0\leq|\alpha|\leq 2\right\}$ be the weighted Sobolev space with norm $$\|f\|_{2,\delta} = \left(\sum_{|\alpha|\leq 2} \|D^{\alpha}f\|_{\delta}^2\right)^{1/2}$$.
			
			Then, for 
			any $\zeta\in \mathbb{C}^n$ with $\zeta\cdot\zeta=0$ it holds for $\delta\in(1/2,1)$ that
			\begin{align*}
			\|g_{\zeta}\ast w\|_{2,-\delta} \leq c(\delta,\zeta)\|w\|_{2,\delta}
			\end{align*}
			Furthermore, under the definition
			$$\textbf{G}_{\zeta}w(x)=\int_{\Omega} G_{\zeta}(x-y)w(y)\,dy$$ it holds that $$\|\textbf{G}_{\zeta}w\|_{H^2(\Omega)} \leq c(\zeta, \Omega) \|w\|_{L^2(\Omega)}.$$			
		\end{proposition}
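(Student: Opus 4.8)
Since the symbol $q$ does not appear anywhere in the statement, this proposition is purely about the free operators $g_\zeta\ast$ and $G_\zeta$: it is the Sylvester--Uhlmann weighted estimate \cite{Sylvester_Uhlmann} together with two soft consequences, and nothing in it is sensitive to $q$ being complex. I would prove the first (weighted $L^2$) bound and then deduce the other two. For the first bound I would rotate $\mathbb{R}^n$ and apply the modulation $e^{isx_1}$ -- both isometries of every $L^2_\delta$ -- to reduce to $\zeta=s(e_1+ie_2)$ with $s=|\zeta|/\sqrt2$, so that $g_\zeta\ast$ becomes convolution with the Fourier multiplier $\mu_\zeta(\xi)^{-1}$, $\mu_\zeta(\xi)=|\xi|^2+2\zeta\cdot\xi=\xi_1^2+(\xi_2+is)^2+|\xi''|^2$, whose zero set $\Sigma_\zeta$ is the codimension-two sphere $\{|\xi|=s,\ \xi_2=0\}$. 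I would then fix a smooth cutoff $\chi$ equal to $1$ on a tube of width $\sim s$ about $\Sigma_\zeta$ and supported in a slightly wider tube, and split $\mu_\zeta^{-1}=\chi\,\mu_\zeta^{-1}+(1-\chi)\,\mu_\zeta^{-1}$. On $\mathrm{supp}(1-\chi)$ one checks $|\mu_\zeta(\xi)|\gtrsim\langle\xi\rangle^2$ with a constant depending only on the tube width and on the lower bound $a\le|\zeta|$; hence $(1-\chi)\mu_\zeta^{-1}$ is a bounded (indeed $O(\langle\xi\rangle^{-2})$) multiplier, and since $\delta-1<0$ this contributes a term $\le c\|f\|_{L^2}\le c\|f\|_\delta$.

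The remaining piece, near $\Sigma_\zeta$, is the whole content of the proposition and the step I expect to be the obstacle. In coordinates $(\rho,\xi_2)$ normal to the sphere ($\rho$ the signed distance to $\{|\xi|=s\}$ within $\{\xi_2\approx0\}$) one has $\mu_\zeta=2s(\rho+i\xi_2)+O(|(\rho,\xi_2)|^2)$, so up to lower-order errors $\chi\,\mu_\zeta^{-1}$ is $\tfrac1{2s}$ times the Cauchy kernel $(\rho+i\xi_2)^{-1}$ in the two normal variables, tensored with the identity in the $n-2$ tangential ones; the prefactor $\tfrac1{2s}\sim|\zeta|^{-1}$ is the source of the claimed gain. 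The two-dimensional transverse operator is then controlled by the weighted Riesz-potential (Stein--Weiss) inequality $\big\||x|^{\delta-1}(\tfrac1z\ast g)\big\|_{L^2(\mathbb{R}^2)}\lesssim\big\||x|^{\delta}g\big\|_{L^2(\mathbb{R}^2)}$, which holds precisely for $0<\delta<1$; this is where the codimension-two geometry is decisive, the corresponding one-dimensional inequality being false at $\delta=\tfrac12$. Replacing $|x|^{\pm}$ by $\langle x\rangle^{\pm}$ and adding the two pieces gives the first estimate, with a constant uniform over $|\zeta|\ge a$. (Alternatively one may prove the case $|\zeta|=\sqrt2$ and then invoke the scaling identity $g_{t\zeta}(x)=t^{n-2}g_\zeta(tx)$, which converts the weight bookkeeping into exactly the factor $1/|\zeta|$.)

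For the $H^2_{-\delta}$ bound, fix $\delta\in(\tfrac12,1)$ and put $v=g_\zeta\ast w$. Because $\mu_\zeta$ is the full symbol of the constant-coefficient operator $L_\zeta:=-\Delta-2i\zeta\cdot\nabla$ and $g_\zeta$ is its fundamental solution, $L_\zeta v=w$; and by the first estimate $v\in L^2_{\delta-1}$ with $\|v\|_{\delta-1}\le c|\zeta|^{-1}\|w\|_\delta$. I would then use interior elliptic regularity for $L_\zeta$, which on unit balls gives $\|v\|_{H^2(B_1(x_0))}\le C(\zeta)\big(\|v\|_{L^2(B_2(x_0))}+\|w\|_{L^2(B_2(x_0))}\big)$ with $C(\zeta)$ independent of the centre $x_0$, multiply by $\langle x_0\rangle^{-2\delta}$, and sum over a lattice of centres. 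Using $-\delta\le\delta-1<0\le\delta$ (the embeddings $L^2_0\hookrightarrow L^2_{\delta-1}\hookrightarrow L^2_{-\delta}$ and $L^2_\delta\hookrightarrow L^2_{-\delta}$) this yields $\|v\|_{2,-\delta}\le C(\zeta)\big(\|v\|_{-\delta}+\|w\|_{-\delta}\big)\le C(\zeta)\big(\|v\|_{\delta-1}+\|w\|_\delta\big)\le c(\delta,\zeta)\|w\|_\delta\le c(\delta,\zeta)\|w\|_{2,\delta}$.

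Finally, for the operator $\mathbf{G}_\zeta$ on $\Omega$ I would write $\mathbf{G}_\zeta w(x)=e^{ix\cdot\zeta}\,(g_\zeta\ast\widetilde w)(x)$ with $\widetilde w=e^{-i(\cdot)\cdot\zeta}\,w\,\mathbf 1_\Omega$. Since $\widetilde w$ is supported in $\overline\Omega$ it lies in $L^2_\delta(\mathbb{R}^n)$ (take $\delta=\tfrac34$) with $\|\widetilde w\|_\delta\le c(\zeta,\Omega)\|w\|_{L^2(\Omega)}$, so by the previous step $g_\zeta\ast\widetilde w\in H^2_{-\delta}(\mathbb{R}^n)$ with norm $\le c(\delta,\zeta)\|\widetilde w\|_\delta$. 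On the bounded set $\Omega$ the weight $\langle x\rangle^{-\delta}$ is bounded above and below and $e^{ix\cdot\zeta}$ is smooth with derivatives bounded by $c(\zeta,\Omega)$, so restricting to $\Omega$ and multiplying back by $e^{ix\cdot\zeta}$ costs only a factor $c(\zeta,\Omega)$, giving $\|\mathbf{G}_\zeta w\|_{H^2(\Omega)}\le c(\zeta,\Omega)\|w\|_{L^2(\Omega)}$; since $-\Delta G_\zeta=\delta$, this is the stated bound for the Faddeev Green's operator.
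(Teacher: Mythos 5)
The paper does not actually prove this proposition: its ``proof'' is a pointer to \cite{Sylvester_Uhlmann} and \cite{Nachman}, so your attempt is by construction a different route --- you are reconstructing the quoted estimates rather than matching an argument in the text. Your second and third steps are correct and are essentially how these mapping properties are deduced in the literature: the interior elliptic estimate for $-\Delta-2i\zeta\cdot\nabla$ on unit balls, summed against the slowly varying weight $\langle x\rangle^{-2\delta}$ and combined with the embedding $L^2_{\delta-1}\hookrightarrow L^2_{-\delta}$ (exactly where $\delta>1/2$ enters), gives the $H^2_{-\delta}$ bound, indeed in the stronger form with $\|w\|_{\delta}$ on the right; and the identity $\mathbf{G}_\zeta w=e^{ix\cdot\zeta}\,\bigl(g_\zeta\ast(e^{-i(\cdot)\cdot\zeta}w\,\mathbf{1}_\Omega)\bigr)$, together with boundedness of the weights and of $e^{\pm ix\cdot\zeta}$ on the bounded set $\Omega$, cleanly yields the $H^2(\Omega)$ bound for the Faddeev Green operator.

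The gap is in the first (and central) estimate. The piece away from the characteristic variety is fine except that, as written (``contributes a term $\le c\|f\|_{L^2}$''), you have dropped the $1/|\zeta|$ factor; it is recoverable because on that region the symbol is in fact $O_a\bigl(1/(|\zeta|\langle\xi\rangle)\bigr)$, or via your scaling remark, but you should say so. The near piece, however, is precisely the content of the Sylvester--Uhlmann lemma, and you leave it at the level of ``up to lower-order errors this is the Cauchy kernel tensored with the identity.'' Making that rigorous is the whole work: the characteristic sphere is curved, so the operator is not a tensor product and must be flattened with the errors controlled; and, most delicately, the physical-space weights $\langle x\rangle^{\delta}$, $\langle x\rangle^{\delta-1}$ do not factor along the transverse/tangential frequency splitting, so the two-dimensional Stein--Weiss inequality cannot simply be applied ``in the normal variables'' with the identity in the rest. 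This is why \cite{Sylvester_Uhlmann} argue instead through a partition of unity near the characteristic variety and uniform one-dimensional weighted inequalities for the transverse first-order factor. There is also a small slip in the reduction: the normal form $\xi_1^2+(\xi_2+is)^2+|\xi''|^2$ corresponds to $\zeta=ise_2$, which has $\zeta\cdot\zeta=-s^2\neq0$; the admissible choice $\zeta=s(e_1+ie_2)$ gives the symbol $|\xi|^2+2s\xi_1+2is\xi_2$, whose zero set is the sphere of radius $s$ centred at $-se_1$ in $\{\xi_2=0\}$ (it passes through the origin). The geometry and the $1/(2s)$ gain are unchanged, so nothing structural breaks, but as written the reduction is inconsistent with $\zeta\cdot\zeta=0$. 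In short: either carry out the flattening and weight bookkeeping in detail, or do as the paper does and cite \cite{Sylvester_Uhlmann} and \cite{Nachman} for the first estimate; your deductions of the $H^2_{-\delta}$ and $H^2(\Omega)$ bounds can then stand on their own.
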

	
	\begin{proof}
		The proof of the above estimates can be found in \cite{Sylvester_Uhlmann} and \cite{Nachman}.\\
		\end{proof}

		For the uniqueness proof our interest resides in studying the exponential growing solutions given through the equation (\ref{Schr_int_eq1}):
		
		\begin{corollary}\label{uniqueness}
			Let $0<\delta<1$ and $q \in L^\infty(\Omega)$ complex-valued and extended to zero outside $\Omega$. 
			
			Then there exists an $R>0$ such that for all $\zeta\in\mathbb{C}^n$ with $\zeta\cdot\zeta=0$ and $|\zeta|>R$ the integral equation (\ref{Schr_int_eq1}) is uniquely solvable with $e^{-ix\cdot\zeta}w(x,\zeta)-1 \in L^2_{\delta-1}(\mathbb{R}^n)$.
			
			Furthermore, it holds
			\begin{align}\label{w_estimate}
			\|e^{-ix\cdot\zeta}w(x,\zeta)-1\|_{\delta-1} \leq \frac{\tilde{c}(R,\delta)}{|\zeta|}\|q\|_{\delta}.
			\end{align}
		\end{corollary}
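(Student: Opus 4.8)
The plan is to recast the integral equation (\ref{Schr_int_eq1}) as the equivalent fixed-point equation (\ref{Schr_int_eq2}) for $\psi(x,\zeta):=e^{-ix\cdot\zeta}w(x,\zeta)-1$ and to invert the operator there by a Neumann series in $L^2_{\delta-1}(\mathbb{R}^n)$, using the $|\zeta|^{-1}$-gain in the first estimate of \propref{conv_est}. First I would divide (\ref{Schr_int_eq1}) by $e^{ix\cdot\zeta}$ and use $G_\zeta(x)=e^{ix\cdot\zeta}g_\zeta(x)$, so that $e^{-ix\cdot\zeta}G_\zeta(x-y)=e^{-iy\cdot\zeta}g_\zeta(x-y)$; this shows that $w$ solves (\ref{Schr_int_eq1}) with $e^{-ix\cdot\zeta}w-1\in L^2_{\delta-1}(\mathbb{R}^n)$ if and only if $\psi$ solves $(I+T_\zeta)\psi=-\,g_\zeta\ast q$ in $L^2_{\delta-1}(\mathbb{R}^n)$, where $T_\zeta\phi:=g_\zeta\ast(q\phi)$, which is precisely (\ref{Schr_int_eq2}). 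It therefore suffices to make $I+T_\zeta$ boundedly invertible on $L^2_{\delta-1}(\mathbb{R}^n)$ for $|\zeta|$ large and then to read off the size of $\psi$.

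The key step is to bound $T_\zeta$. Because $q\in L^\infty$ is supported in the bounded set $\bar\Omega$, the weights $\langle x\rangle^{\pm s}$ are bounded above and below by positive constants on $\operatorname{supp}q$, so multiplication by $q$ maps $L^2_{\delta-1}(\mathbb{R}^n)$ into $L^2_{\delta}(\mathbb{R}^n)$ with $\|q\phi\|_{\delta}\le C(\delta,\Omega)\|q\|_{\infty}\|\phi\|_{\delta-1}$; in particular $\|q\|_{\delta}<\infty$, so the right side of (\ref{w_estimate}) is finite. Composing with the bound $\|g_\zeta\ast f\|_{\delta-1}\le \frac{c(\delta,a)}{|\zeta|}\|f\|_{\delta}$ from \propref{conv_est} — available since $0<\delta<1$ and $|\zeta|\ge a$, say with $a=1$ — gives $\|T_\zeta\|_{\mathcal L(L^2_{\delta-1})}\le \frac{c(\delta,1)\,C(\delta,\Omega)\,\|q\|_{\infty}}{|\zeta|}$. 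Now choose $R=R(\delta,\Omega,\|q\|_{\infty})\ge 1$ so that this is $\le \frac12$ whenever $|\zeta|>R$; the Neumann series then yields that $I+T_\zeta$ is invertible with $\|(I+T_\zeta)^{-1}\|\le 2$, so (\ref{Schr_int_eq2}) has the unique solution $\psi=-(I+T_\zeta)^{-1}(g_\zeta\ast q)\in L^2_{\delta-1}(\mathbb{R}^n)$, and $w=e^{ix\cdot\zeta}(1+\psi)$ is the unique solution of (\ref{Schr_int_eq1}) in the stated class. For the estimate, $\|\psi\|_{\delta-1}\le 2\|g_\zeta\ast q\|_{\delta-1}\le \frac{2c(\delta,1)}{|\zeta|}\|q\|_{\delta}$, which is (\ref{w_estimate}) with $\tilde c(R,\delta)=2c(\delta,1)$.

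I do not expect a genuine obstacle here: this is the Sylvester--Uhlmann contraction argument, and the only care needed is the bookkeeping of weighted norms in the multiplication-by-$q$ step (where boundedness of $\Omega$ is used) and checking that $R$ depends only on $\delta$, $\Omega$ and $\|q\|_\infty$. The single point worth emphasizing is that nothing in the argument — neither \propref{conv_est} nor the Neumann-series inversion — uses $q$ to be real-valued, so the corollary, and hence the solvability theory for the exponentially growing solutions, carries over verbatim to complex potentials; this is the only input needed from this section.
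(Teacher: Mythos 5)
Your argument is correct and is essentially the paper's own proof: you conjugate (\ref{Schr_int_eq1}) to the fixed-point equation (\ref{Schr_int_eq2}), bound multiplication by the compactly supported $q$ from $L^2_{\delta-1}$ to $L^2_{\delta}$, compose with the $|\zeta|^{-1}$ estimate of Proposition \ref{conv_est}, and invert $I+A_{\zeta}$ by a Neumann series for $|\zeta|>R$, exactly as in the text (which writes the multiplication bound as $\|\langle x\rangle q\|_{\infty}$ rather than $C(\delta,\Omega)\|q\|_{\infty}$, a purely cosmetic difference). The uniqueness and the estimate (\ref{w_estimate}) are obtained in the same way, so no further comment is needed.
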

	
		\begin{proof}
			Let $M_q\phi = q\phi$, i.e., the operator of multiplication with $q$. We show that for $q\in L^{\infty}(\mathbb{R}^n)$ with compact support,  $M_q: L^2_{\delta-1}(\mathbb{R}^n)\rightarrow L^2_{\delta}(\mathbb{R}^n)$ is a bounded operator.
			
			Let $f \in L^2_{\delta-1}(\mathbb{R}^n)$.
			\begin{align*}
			\|M_qf\|_{\delta} &= \left[\int_{\mathbb{R}^n} (1+|x|^2)^{\delta}|q(x)f(x)|^2\, dx\right]^{1/2} \\
			&= \left[\int_{\mathbb{R}^n} \left(1+|x|^2\right)|q(x)|^2 \left(1+|x|^2\right)^{\delta-1}|f(x)|^2\,dx\right]^{1/2} \\
			& \leq \|\langle x\rangle q \|_{\infty}\|f\|_{\delta-1}.\\
			\end{align*}
			
			We define the operator $A_{\zeta}=C_{\zeta}M_q$, where $C_{\zeta}$ is the convolution with $g_{\zeta}$, that is:
			
			\begin{align}\label{K_zeta}
			A_{\zeta}f(x) = \int_{\mathbb{R}^n} g_{\zeta}(x-y)q(y)f(y)\,dy = C_{\zeta}M_qf
			\end{align}
			
			By proposition \ref{conv_est} for $|\zeta|\geq R$ we obtain:
			
			\begin{align*}
			\|A_{\zeta}f\|_{\delta-1} = \|C_{\zeta}M_qf\|_{\delta-1} \leq \frac{c(\delta, R)}{|\zeta|}\|M_qf\|_{\delta}\leq \frac{c(\delta, R)}{|\zeta|}\left\|\langle x\rangle q\right\|_{\infty}\|f\|_{\delta-1}
 			\end{align*}
			
			Therefore, $K_{\zeta}$ is bounded in $L^2_{\delta-1}(\mathbb{R}^n)$. Further, if we set  $|\zeta|>R:=c(\delta, R)\left\|\langle x\rangle q\right\|_{\infty}$ then $A_{\zeta}$ is a contraction and $I+A_{\zeta}$ is invertible.
			
			Since $q\in L^{\infty}$ and as compact support then it is in $L^2_{\delta}$ and therefore the right-hand side of (\ref{Schr_int_eq2}) is in $L^2_{\delta-1}$. Hence, the unique solution to (\ref{Faddeev_eq1}) is given by:

			$$\psi(x,\zeta) = -\left[I + A_{\zeta}\right]^{-1}\left(g_{\zeta}\ast q\right).$$
			
			From here, we already know that $w = e^{ix\cdot\zeta}\left(1 - \left[I + A_{\zeta}\right]^{-1}\left(g_{\zeta}\ast q\right) \right)$ solves the integral equation (\ref{Schr_int_eq1}). Furthermore, the estimate (\ref{w_estimate}) easily follows from $[I+A_{\zeta}]^{-1}$ being bounded in $L^2_{\delta-1}$, proposition \ref{conv_est} and $g_{\zeta}\ast q\in L^2_{\delta-1}$.
			
			Now, let us suppose that there exist two solutions $w_1, w_2$ of (\ref{Schr_int_eq1}) such that $\phi_j = e^{-ix\cdot\zeta}w_j-1 \in L^2_{\delta-1}$. Then, their difference is also in $L^2_{\delta-1}$ and both fulfill the equation $[I+A_{\zeta}]\phi_j = -g_{\zeta}\ast q$.
			
			This implies: $[I+A_{\zeta}]\left(e^{-ix\cdot\zeta}(w_1-w_2)\right) = 0 \Rightarrow w_1\equiv w_2$ by the invertibility of $I+A_{\zeta}$ in $L^2_{\delta-1}$.
			
			Hence, uniqueness of the integral equation (\ref{Schr_int_eq1}) for exponential growing solutions follows.\\
		\end{proof}
	
		We designate the values $\zeta$ for which the solution does not exist or is not unique as exceptional points. To be precise		
		
		\begin{defn}
			Let $q\in L^{\infty}(\Omega)$ complex-valued and extended to zero outside $\Omega$. 
			
			Let $\zeta\in\mathcal{V}:=\{\zeta\in\mathbb{C}^n\setminus\{0\}: \zeta\cdot\zeta=0\}$. Then we call $\zeta\in\mathcal{V}$ an exceptional point for $q$ if there is no unique exponential growing solution of $(-\Delta+q)w=0$ in $\mathbb{R}^n$, that is, there is no unique solution of the type:
			\begin{align*}
				w(x,\zeta):=e^{ix\cdot\zeta}\left(1+\mu(x,\zeta)\right), \text{ with } \mu\in L^2_{\delta-1}(\mathbb{R}^n),\, 0<\delta<1.
			\end{align*}
		\end{defn}
		
		The uniqueness proof of this section given by \cite{Nach_Syl_Uhl} and Nachman's reconstruction method \cite{Nachman}, only require large non-exceptional points $\zeta$. However, in this sense the D-bar method is very unstable and would be desirable to mimic the theory in two-dimensions, where we are able to reconstruct $\gamma$ from small values of non-exceptional points $\zeta$.
		
		\begin{lemma}
			Let $q_1,\, q_2 \in L^{\infty}(\Omega)$ and extended to zero outside $\Omega$. Let $\zeta\in\mathcal{V}$ a non-exceptional point for $q_1,\,q_2$.
			
			Suppose that $\Lambda_{q_1}=\Lambda_{q_2}$ and $w_1,\, w_2$ are the unique solutions of $(-\Delta+q_j)w_j = 0$ in $\mathbb{R}^n$ of the form $e^{ix\cdot\zeta}\left(1+\mu_j\right)$.
			
			Then, $$w_1 = w_2,\quad \text{ in } \mathbb{R}^n\setminus\Omega.$$
		\end{lemma}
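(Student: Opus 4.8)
The plan is to run the classical Cauchy--data gluing argument. First I would record that the exponential growing solution $w_1$ lies in $H^2_{\mathrm{loc}}(\mathbb{R}^n)$: writing $w_1 = e^{ix\cdot\zeta} - \mathbf{G}_{\zeta}(q_1 w_1)$ from (\ref{Schr_int_eq1}) and using that $\mathbf{G}_{\zeta}$ maps $L^2(\Omega)$ into $H^2(\Omega)$ by \propref{conv_est}, together with interior elliptic regularity, $w_1$ is $H^2$ in a neighbourhood of $\bar\Omega$; hence $f := w_1|_{\Bound} \in H^{1/2}(\Bound)$ and $\partial_\nu w_1|_{\Bound}$ are well defined. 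Since $w_1|_\Omega$ is the unique $H^1(\Omega)$ solution of $(-\Delta + q_1)w = 0$ with trace $f$, the very definition of the Dirichlet-to-Neumann map gives $\partial_\nu w_1|_{\Bound} = \Lambda_{q_1} f$.

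Next I would introduce $\tilde w_2 \in H^1(\Omega)$, the unique solution of $(-\Delta + q_2)\tilde w_2 = 0$ in $\Omega$ with $\tilde w_2|_{\Bound} = f$; this is well posed precisely because $0$ is not a Dirichlet eigenvalue of $-\Delta + q_2$ (the condition under which $\Lambda_{q_2}$ is defined in the first place), and by elliptic regularity $\tilde w_2 \in H^2$ near $\Bound$. Then $\partial_\nu \tilde w_2|_{\Bound} = \Lambda_{q_2} f = \Lambda_{q_1} f = \partial_\nu w_1|_{\Bound}$, so the pieces $\tilde w_2$ on $\Omega$ and $w_1$ on $\mathbb{R}^n \setminus \bar\Omega$ carry matching Dirichlet and Neumann traces on $\Bound$.

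Now set
\[
w := \begin{cases} \tilde w_2, & \text{in } \Omega, \\[2pt] w_1, & \text{in } \mathbb{R}^n \setminus \bar\Omega. \end{cases}
\]
Because the Cauchy data agree across $\Bound$, no interface layer is produced and $w \in H^2_{\mathrm{loc}}(\mathbb{R}^n)$ solves $(-\Delta + q_2)w = 0$ in all of $\mathbb{R}^n$: inside $\Omega$ by construction, and outside $\Omega$ because there $q_2 = 0$ while $-\Delta w_1 = 0$ (as $q_1 = 0$ there as well). Moreover $e^{-ix\cdot\zeta}w - 1$ equals $\mu_1 \in L^2_{\delta-1}(\mathbb{R}^n\setminus\Omega)$ outside $\Omega$, and on the bounded set $\Omega$ it equals $e^{-ix\cdot\zeta}\tilde w_2 - 1 \in L^2(\Omega) \subset L^2_{\delta-1}(\Omega)$; hence $e^{-ix\cdot\zeta}w - 1 \in L^2_{\delta-1}(\mathbb{R}^n)$. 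Thus $w$ is an exponential growing solution for $q_2$ at the non-exceptional point $\zeta$, and by the uniqueness statement in \corref{uniqueness} (together with the definition of non-exceptional point) $w \equiv w_2$. In particular $w_1 = w = w_2$ on $\mathbb{R}^n \setminus \Omega$, which is the assertion.

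The step I expect to be the main obstacle is the gluing itself: showing rigorously that a function which coincides with an $H^2$ solution of $(-\Delta+q_2)u=0$ on each side of $\Bound$ and has matching Dirichlet and Neumann traces is globally an $H^2_{\mathrm{loc}}$ solution, with no distribution supported on $\Bound$. This is carried out through the weak formulation and integration by parts on each side, and it is here that the $C^{1,1}$ regularity of $\Bound$ is used, both to obtain the $H^2$ elliptic estimates and to validate the trace and Green identities.
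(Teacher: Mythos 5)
Your proposal is correct and follows essentially the same route as the paper: solve the interior Dirichlet problem for $q_2$ with boundary data $w_1|_{\Bound}$, glue this with $w_1$ outside $\Omega$ (the equality $\Lambda_{q_1}=\Lambda_{q_2}$ giving matching Cauchy data so the glued function solves $(-\Delta+q_2)w=0$ in all of $\mathbb{R}^n$), and invoke uniqueness of the exponential growing solution at the non-exceptional point $\zeta$ to identify it with $w_2$. Your additional verifications ($H^2_{\mathrm{loc}}$ regularity for the traces and the $L^2_{\delta-1}$ asymptotics of the glued function) are details the paper leaves implicit, not a different argument.
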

		
		\begin{proof}
			Let $v \in H^1(\Omega)$ be the unique solution of 
			\begin{align*}
			-\Delta v + q_2\, v &= 0, \quad \text{ in } \Omega \\
			\left.v\right|_{\Bound} &= \left.w_1\right|_{\Bound}
			\end{align*} 
			
			Define, 
			\begin{align*}
			h = \left\{
			\begin{array}{l}
			v,\quad \text{ in } \Omega \\
			w_1, \;\;\text{ in } \mathbb{R}^n\setminus\Omega
			\end{array}
			\right.
			\end{align*}
			
			Since, $\Lambda_{q_1} = \Lambda_{q_2}$ it holds that $\Lambda_{q_1}\left.w_1\right|_{\Bound} = \Lambda_{q_2}\left.w_1\right|_{\Bound} \Rightarrow \frac{\partial w_1}{\partial\nu} = \frac{\partial v}{\partial\nu}$. This implies that $h$ is continuous over $\Bound$, as well as, $\frac{\partial h}{\partial\nu}$. 
			
			Therefore, $h$ solves $-\Delta h + q_2 \,h = 0$ in $ \mathbb{R}^n$ and has the appropriate asymptotics since $w_1$ has them. By the uniqueness theorem it follows that $h=w_2$ and thus $w_1=w_2$ in $\mathbb{R}^n\setminus\Omega$.\\
		\end{proof}
		
		Now the uniqueness follows:
		
		\begin{theorem}
			Let $q_1, \,q_2 \in L^{\infty}(\Omega)$ extended to zero outside $\Omega$. Suppose that $0$ is not a Dirichlet eigenvalue of $-\Delta+q_j$, $j=1,\,2$ on $\Omega$.\\
			
			\centering{If $\Lambda_{q_1}=\Lambda_{q_2}$, then $q_1=q_2$.}
		\end{theorem}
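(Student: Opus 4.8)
The plan is to run the Sylvester--Uhlmann argument: reduce the problem to showing that the Fourier transform of $q_1-q_2$ (extended by zero to $\mathbb{R}^n$) vanishes, and then extract that transform from the exponentially growing solutions of \corref{uniqueness} by means of an orthogonality identity produced by the hypothesis $\Lambda_{q_1}=\Lambda_{q_2}$. Nothing in the scheme uses that the potentials are real, so the real-variable argument transfers verbatim; the only task is to check that each ingredient invoked is the complex-valued version already established above.

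First I would set up the integral identity. Choose $R$ large enough that \corref{uniqueness} applies to \emph{both} $q_1$ and $q_2$, and take $\zeta_1,\zeta_2\in\mathcal{V}$ with $|\zeta_j|>R$; let $w_j=e^{ix\cdot\zeta_j}(1+\psi_j)$ be the associated solution of $(-\Delta+q_j)w_j=0$ in $\mathbb{R}^n$, where $\psi_j\in L^2_{\delta-1}(\mathbb{R}^n)$, $0<\delta<1$. Since $0$ is not a Dirichlet eigenvalue of $-\Delta+q_2$, there is a unique $v\in H^1(\Omega)$ with $(-\Delta+q_2)v=0$ in $\Omega$ and $v|_{\Bound}=w_1|_{\Bound}$; exactly as in the previous lemma, $\Lambda_{q_1}=\Lambda_{q_2}$ forces $v$ and $w_1$ to share the same Cauchy data on $\Bound$. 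Green's identity applied to the pair $(v,w_2)$ — both annihilated by $-\Delta+q_2$ — shows that $\int_{\Bound}\big((\partial_\nu v)w_2-v\,\partial_\nu w_2\big)\,dS=0$, and Green's identity applied to $(w_1,w_2)$ gives
\begin{align*}
\int_{\Omega}(q_1-q_2)\,w_1w_2\,dx=\int_{\Bound}\big((\partial_\nu w_1)w_2-w_1\,\partial_\nu w_2\big)\,dS .
\end{align*}
Replacing $w_1|_{\Bound}$ and $\partial_\nu w_1|_{\Bound}$ by $v|_{\Bound}$ and $\partial_\nu v|_{\Bound}$ turns the right-hand side into the vanishing boundary term, whence $\int_{\Omega}(q_1-q_2)w_1w_2\,dx=0$. (These are classical Green identities: by $C^{1,1}$-elliptic regularity the solutions lie in $H^2(\Omega)$ — consistent with \propref{conv_est}, which places $\mathbf{G}_{\zeta}(q_jw_j)$ in $H^2(\Omega)$ while $q_jw_j\in L^2(\Omega)$.)

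Next I would choose the parameters — the only place the hypothesis $n\ge 3$ is used. Given $\xi\in\mathbb{R}^n\setminus\{0\}$, pick orthonormal $k_1,k_2\in\mathbb{R}^n$ orthogonal to $\xi$ (possible since $\dim\xi^{\perp}\ge 2$) and, for $\mu>0$, set
\begin{align*}
\zeta_1=-\tfrac{\xi}{2}+\mu\,k_2+i\sqrt{\mu^2+\tfrac{|\xi|^2}{4}}\;k_1,\qquad \zeta_2=-\tfrac{\xi}{2}-\mu\,k_2-i\sqrt{\mu^2+\tfrac{|\xi|^2}{4}}\;k_1 .
\end{align*}
One checks $\zeta_j\cdot\zeta_j=0$, $\zeta_1+\zeta_2=-\xi$, and $|\zeta_j|^2=\tfrac{|\xi|^2}{2}+2\mu^2\to\infty$, so $|\zeta_j|>R$ once $\mu$ is large and the identity of the previous step applies. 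Since $e^{ix\cdot\zeta_1}e^{ix\cdot\zeta_2}=e^{-ix\cdot\xi}$,
\begin{align*}
0=\int_{\Omega}(q_1-q_2)\,w_1w_2\,dx=\widehat{(q_1-q_2)}(\xi)+\int_{\Omega}(q_1-q_2)\,e^{-ix\cdot\xi}\big(\psi_1+\psi_2+\psi_1\psi_2\big)\,dx .
\end{align*}
Because $q_1-q_2\in L^\infty$ is supported in $\bar\Omega$, the factor $(q_1-q_2)e^{-ix\cdot\xi}$ lies in $L^2_{1-\delta}(\mathbb{R}^n)$, while (\ref{w_estimate}) gives $\|\psi_j\|_{\delta-1}\le\tilde c(R,\delta)|\zeta_j|^{-1}\|q_j\|_{\delta}\to0$. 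Cauchy--Schwarz in the weighted norms (the leftover weight $\langle x\rangle^{1-\delta}$ being bounded on $\bar\Omega$) bounds the linear terms by $O(|\zeta|^{-1})$, and the quadratic term by $O(\|\psi_1\|_{\delta-1}\|\psi_2\|_{\delta-1})$. Letting $\mu\to\infty$ yields $\widehat{(q_1-q_2)}(\xi)=0$ for all $\xi\ne0$, hence for all $\xi$ by continuity of the transform, so $q_1=q_2$.

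The step I expect to carry the weight is the first one: converting $\Lambda_{q_1}=\Lambda_{q_2}$ into the orthogonality relation $\int_\Omega(q_1-q_2)w_1w_2=0$, and, crucially, being entitled to use \emph{genuine} exponentially growing solutions there — which is precisely why the parameters must be taken with $|\zeta_j|$ large, so that they are non-exceptional by \corref{uniqueness}. The passage to the limit is then routine bookkeeping with the weighted estimates of \propref{conv_est} and (\ref{w_estimate}). Since none of these steps distinguishes $q$ from $\bar q$, the complex-valued case requires no new input beyond what has already been proved.
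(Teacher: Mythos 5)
Your argument is correct, and it is a close cousin of the paper's proof rather than a verbatim match, so a short comparison is worth recording. The paper constructs one CGO solution per potential, both with the \emph{same} parameter $\tilde{\zeta}$, and tests each against the harmonic exponential $e^{ix\cdot\zeta}$ with the complementary parameter $\zeta$ (so $\zeta+\tilde\zeta=k$); the hypothesis $\Lambda_{q_1}=\Lambda_{q_2}$ enters through the preceding lemma, which gives $w_1=w_2$ outside $\Omega$ and hence equal Cauchy data, and the resulting identity $\int_\Omega e^{ix\cdot\zeta}(q_1w_1-q_2w_2)\,dx=0$ produces only remainder terms that are \emph{linear} in $\psi_j$. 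You instead use the symmetric Alessandrini-type identity $\int_\Omega(q_1-q_2)w_1w_2\,dx=0$ with two CGO solutions at complementary parameters $\zeta_1+\zeta_2=-\xi$, re-deriving the equality of Cauchy data inline via the auxiliary solution $v$ of the $q_2$-equation (essentially reproving the content of the paper's lemma inside the proof), and you must additionally control the quadratic remainder $\psi_1\psi_2$ — harmless here, since on the bounded set $\Omega$ the weights are comparable to $1$ and Cauchy--Schwarz gives $O(\|\psi_1\|_{\delta-1}\|\psi_2\|_{\delta-1})$. What each route buys: the paper's pairing with a pure exponential keeps the error strictly first order in the $\psi$'s and reuses its exterior-identification lemma, while your version is self-contained at the level of this theorem and is the form that generalizes directly to stability estimates; both use $n\ge3$ only to choose the complex frequencies, and neither uses reality of $q$, so the complex-valued conclusion stands in both. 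Two cosmetic cautions: state explicitly that $R$ is taken as the larger of the two thresholds from the corollary (you do), and note that the Green identities are justified because $w_j\in H^2_{\mathrm{loc}}$ and $v\in H^2(\Omega)$ by $C^{1,1}$ elliptic regularity (you do as well).
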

		
		\begin{proof}
			
			Let $k\in\mathbb{R}^n$ be fixed and for $m, s\in\mathbb{R}^n$ we set
			\begin{align*}
			\zeta = \frac{1}{2}\left(\left(k+s\right) + im\right)\, \text{ and }\,\tilde{\zeta} = \frac{1}{2}\left(\left(k-s\right) - im\right)
			\end{align*} 
			with $k\cdot s = k\cdot m = s\cdot m = 0$ and $|k|^2+|s|^2 = |m|^2$.
			
			The $\zeta,\,\tilde{\zeta}$ are in $\mathbb{C}^n$ and fulfill the condition $\zeta\cdot\zeta=0$. Hence, taking $s, \,m$ large enough we obtain solutions $w_j$ of the integral equation (\ref{Schr_int_eq1}) for their respective potentials for $\tilde{\zeta}$. By Green's identity it holds:
			
			\begin{align*}
			\int_{\Omega} e^{ix\cdot\zeta}q_j(x)w_j(x)\,dx = \int_{\Omega} e^{ix\cdot\zeta}\Delta w_j(x) - w_j\Delta e^{ix\cdot\zeta}\,dx = \int_{\Bound} e^{ix\cdot\zeta}\frac{\partial w_j}{\partial \nu} - w_j\left(\nu\cdot i\zeta\right)e^{ix\cdot\zeta}\,d\sigma(x). 
			\end{align*}
			
			By hypothesis and the previous lemma $\Lambda_{q_1}=\Lambda_{q_2} \Rightarrow \left.w_1\right|_{\Bound}=\left.w_2\right|_{\Bound}$. Therefore, it also holds that $\left.\frac{\partial w_1}{\partial\nu}\right|_{\Bound}=\left.\frac{\partial w_2}{\partial\nu}\right|_{\Bound}$, since $w_j$ solve the interior problem $(-\Delta+q_j)w_j=0$.
			
			Hence, the right-hand side of the integral above is equal for both $q_j$ and assuming the asymptotics of $w_j$ w.r.t. $\tilde{\zeta}$ it follows
			\begin{align*}
			\int_{\Omega} e^{ix\cdot\zeta}\left(q_1w_1 - q_2w_2\right)\, dx = 0 \Leftrightarrow \int_{\Omega} e^{ix\cdot(\zeta+\tilde{\zeta})}\left(q_1-q_2\right)\,dx = \int_{\Omega} e^{ix\cdot(\zeta+\tilde{\zeta})}\left(q_1\psi_1-q_2\psi_2 \right),dx  
			\end{align*}
			
			Using $\zeta+\tilde{\zeta}=k$ and taking modulus we obtain by Cauchy-Schwarz inequality and Corollary \ref{uniqueness}
			
			\begin{align*}
			\left|\int_{\Omega} e^{ix\cdot k}\left(q_1-q_2\right)\, dx\right| \leq \sum_{j=1}^{2} \int_{\Omega} \left|q_j\psi_j\right| \leq \sum_{j=1}^2 \|q_j\|_{1-\delta}\|\psi_j\|_{\delta-1} 				\leq \sum_{j=1}^2 \frac{C}{|\tilde{\zeta}|}\|q_j\|_{1-\delta}\|q_j\|_{\delta}
			\end{align*}
			
			Since, $\tilde{\zeta}$ was arbitrarily depending on $s$, we can take the limit as $|s|\rightarrow \infty$. This implies that the left-hand side equals to zero for each fixed $k \in \mathbb{R}^n$. Given that the proof holds for all $k$ we have
			
			$$\int_{\Omega} e^{ix\cdot k}(q_1 - q_2)\,dx = 0,\quad \forall k \in \mathbb{R}^n$$
			
			Therefore, by Fourier inversion theorem we obtain $q_1=q_2$ in $\Omega$.\\
		\end{proof}

		Notice two things: first we do not require more assumptions for $q$ being complex; second the following uniqueness proof only works for $n\geq 3$ due to the required choice of $\zeta,\,\tilde{\zeta}$.
		
		Hence, uniqueness is extended for complex-potentials in $L^{\infty}(\Omega)$ with $0$ not a Dirichlet eigenvalue of $(-\Delta+q)$. To extend uniqueness for admittivities $\gamma\in C^{1,1}(\bar{\Omega})$ it is still necessary to establish a relation between $\Lambda_{\gamma}$ and $\Lambda_q$. We will present this in a later section.
		
		\section{Preliminaries for Reconstruction}
		
		In this section we present the necessary results to follow Nachman's approach \cite{Nachman} and establish a boundary integral equation and obtain a scattering transform for complex-conductivities from where we obtain the Fourier transform of the $q$. Moreover, we also present some estimates to prove invertibility of the boundary integral equation for small values of $\zeta$, following \cite{Knudsen_3D}.
		\\
		
		Analogously to the classical single and double layer potentials we define the respective operators for $G_{\zeta}$.
		
		The single layer operator is defined as
		\begin{align*}
		S_{\zeta}f(x) = \int_{\Bound} G_{\zeta}(x-y)f(y)\,ds(y)
		\end{align*}
		and the double layer as	
		\begin{align*}
		D_{\zeta}f(x) = \int_{\Bound} \frac{\partial G_{\zeta}}{\partial\nu}(x-y)f(y)\,ds(y).
		\end{align*}
		
	Moreover, taking the trace of double layer potential it holds
	\begin{align*}
	B_{\zeta}f(x) := \text{ p.v. } \int_{\Bound} \frac{\partial G_{\zeta}}{\partial\nu}(x-y)f(y)\,ds(y), \text{ for } x\in\Bound.
	\end{align*}
	
	Since the singularity of $G_{\zeta}$ for $x$ near $y$ is the same as $G_0$, it is locally integrable on $\Bound$ and the trace of $S_{\zeta}$ is still "itself".
	\\
	
	We state here the properties that Nachman established and are essential for the later proofs.		
		\begin{proposition}
			Let $\Omega$ be a bounded $C^{1,1}$-domain in $\mathbb{R}^n,\, n\geq 3$.
			\begin{enumerate}[label=(\roman*)]
				\item For $0\leq s\leq 1$ \begin{align}\label{single_layer}
				\left\|S_{\zeta}f\right\|_{H^{s+1}(\Bound)} \leq c(\zeta, s)\|f\|_{H^s(\Bound)}.
				\end{align}
				\item For $0\leq s\leq \frac{3}{2}$ we have that $B_{\zeta}$ is bounded in $H^s(\Bound)$.
			\end{enumerate}
		\end{proposition}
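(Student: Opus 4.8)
The plan is to reduce both estimates to the classical ($\zeta=0$) layer-potential bounds on a $C^{1,1}$ domain, isolating the entire $\zeta$-dependence in a smooth remainder. This makes transparent that nothing in the argument is sensitive to $q$, or the underlying conductivity, being complex: the operators $S_\zeta$ and $B_\zeta$ depend only on $\zeta\in\mathcal{V}$, which was already allowed to be complex in \cite{Nachman}.

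First I would introduce the Newtonian kernel $\Phi$, the standard fundamental solution of $-\Delta$ on $\mathbb{R}^n$, and set $H_\zeta := G_\zeta - \Phi$. Since $-\Delta G_\zeta = \delta = -\Delta\Phi$ in $\mathcal{D}'(\mathbb{R}^n)$, the difference $H_\zeta$ is a harmonic distribution on all of $\mathbb{R}^n$, hence a real-analytic function by Weyl's lemma; restricted to the compact set $\{x-y: x,y\in\bar\Omega\}$ it is smooth with all derivatives bounded. Therefore the kernels $H_\zeta(x-y)$ and $\partial_{\nu(x)}H_\zeta(x-y)$ are $C^\infty$ on $\Bound\times\Bound$, and the associated integral operators on $\Bound$ are smoothing: they send $H^s(\Bound)$ into $C^\infty(\Bound)$, hence are bounded $H^s(\Bound)\to H^t(\Bound)$ for every $s,t$, with constants depending on $\zeta$ (which the statement permits).

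For part (i), I would split $S_\zeta = S_0 + R_\zeta$, where $S_0 f(x)=\int_{\Bound}\Phi(x-y)f(y)\,ds(y)$ is the classical single-layer potential and $R_\zeta$ is the smoothing operator with kernel $H_\zeta(x-y)$; it then suffices to invoke the classical mapping property $S_0:H^s(\Bound)\to H^{s+1}(\Bound)$ for $0\le s\le 1$ on a bounded $C^{1,1}$ domain. Following \cite{Nachman}, I would obtain this by a partition of unity and boundary flattening: in a chart the principal part of $\Phi(x-y)$ becomes the Riesz potential of order $1$ on $\mathbb{R}^{n-1}$ (multiplier comparable to $|\xi|^{-1}$), which gains exactly one derivative, while the corrections produced by the $C^{1,1}$ parametrization are Calder\'on--Zygmund operators that are one order smoother — the $C^{1,1}$ regularity being precisely what lets one derivative of these corrections be absorbed, which is also what limits the range to $0\le s\le 1$. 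For part (ii) I would similarly write $B_\zeta = B_0 + Q_\zeta$, with $B_0$ the principal-value double-layer operator for $\Phi$ and $Q_\zeta$ the non-singular, hence harmless, smoothing operator with kernel $\partial_{\nu(x)}H_\zeta(x-y)$, and then recall that $B_0$ is bounded on $H^s(\Bound)$ for $0\le s\le 3/2$: after flattening, $\partial_{\nu}\Phi(x-y)$ is, modulo a smoother term, a Calder\'on--Zygmund kernel (the normal field is Lipschitz, which supplies the requisite size and cancellation bounds), giving $L^2$-boundedness, and differentiating the kernel once while tracking the $C^{1,1}$ structure upgrades this to $H^s$ in the stated range.

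The hard part is the classical input, and most of all the $H^{3/2}$-boundedness of $B_0$: on a $C^{1,1}$ boundary the normal vector is only Lipschitz, so $B_0$ is not smoothing as in the $C^\infty$ case and $3/2$ is genuinely the endpoint, and reaching it requires the delicate commutator and kernel-differentiation estimates of \cite{Nachman}. By contrast, the step that is new here, passing from real to complex conductivities, costs nothing: it enters only through the smooth harmonic remainder $H_\zeta$, and that portion of the argument is identical in the two cases.
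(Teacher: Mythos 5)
Your proposal is correct and follows essentially the same route as the paper, which states these bounds without proof as Nachman's results and itself relies on exactly your splitting $G_{\zeta}=G_{0}+H_{\zeta}$ with $H_{\zeta}$ harmonic (hence smooth) later on, in the lemma on $\mathcal{H}_{\zeta}$: the $\zeta$-dependence sits entirely in a smoothing remainder, and the real work is the classical $C^{1,1}$ mapping properties of $S_{0}$ and $B_{0}$ from \cite{Nachman}. One small slip: the double layer $B_{\zeta}$ here is taken with $\partial/\partial\nu(y)$ (the kernel with $\partial/\partial\nu(x)$ is the adjoint operator $B_{\zeta}^{\dag}$), so your remainder $Q_{\zeta}$ should carry the kernel $\partial_{\nu(y)}H_{\zeta}(x-y)$, which in fact makes the smoothing argument cleaner since no multiplication by the merely Lipschitz normal appears on the output side.
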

		
		Let $\rho_0$ be a number large enough so that $\bar{\Omega}\subset \{x: |x|<\rho_0\}$. For any $\rho>\rho_0$ we define $\Omega_{\rho}' = \{x: x\not\in\bar{\Omega}, \, |x|<\rho\}$.
		
		\begin{lemma}
			If $f\in H^{1/2}(\Bound)$, the function $\phi=S_{\zeta}f$ has the following properties:
			\begin{enumerate}[label=(\roman*)]\label{singlelayer}
				\item $\Delta\phi = 0$ in $\mathbb{R}^n\setminus\Bound$.
				\item $\phi\in H^2(\Omega)$ and $\phi\in H^2(\Omega_{\rho}')$ for any $\rho>\rho_0$.
				\item $\phi$ satisfies an analogue to the Sommerfeld radiation condition. For almost every $x$ it holds:
				\begin{align}\label{Sommerfeld}
				\lim_{\rho\rightarrow \infty} \int_{|y|=\rho} \left[G_{\zeta}(x-y)\frac{\partial\phi}{\partial\nu(y)} - \phi(y)\frac{\partial G_{\zeta}}{\partial\nu(y)}(x-y)\right]\,ds(y) = 0.
				\end{align}
				
				In fact, for $\rho>\rho_0$ the above identity holds for $|x|<\rho$  even without taking the limit.
				
				\item Let $B_{\zeta}^{\dag}$ denote the operator on the boundary
				\begin{align}
				B_{\zeta}^{\dag}f(x) =\textnormal{ p.v. } \int_{\Bound} \frac{\partial G_{\zeta}}{\partial\nu(x)}(x-y)f(y)\,ds(y).
				\end{align}
				
				It follows that the (nontangential) limits $\partial\phi/\partial\nu_+, \, \partial\phi/\partial\nu_-$ of the normal derivative of $\phi$ as the boundary is approached from the outside and inside $\Omega$, respectively, are given by:
				
				\begin{align}\label{normal_derivatives_single}
				\frac{\partial \phi}{\partial\nu_{\pm}} = \mp \frac{1}{2}f(x) + B_{\zeta}^{\dag}f(x),\quad \text{ for almost every } x\in\Bound.
				\end{align} 
				
				\item The boundary values $\phi_{+},\, \phi_{-}$ of $\phi$ from outside and inside of $\Omega$, respectively, are identical as elements of $H^{3/2}(\Bound)$ and agree with the trace of the single layer potential $S_{\zeta}f$.\\
			\end{enumerate}
		\end{lemma}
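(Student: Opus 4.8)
The starting point is that $G_\zeta$ depends only on $\zeta$ and not on the potential, so this Lemma is exactly Nachman's and nothing in it is affected by $q$, $\gamma$ or $\omega$ being complex; it is a purely potential‑theoretic statement about the single layer on the $C^{1,1}$ domain $\Omega$. My plan is to reduce everything to the classical single layer $S_0$. Using the remark after \eqref{g_zeta}, write $G_\zeta=G_0+H_\zeta$ with $G_0(x)=c_n|x|^{2-n}$ the usual fundamental solution of $-\Delta$ on $\mathbb{R}^n$, $n\geq3$, and $H_\zeta$ harmonic — indeed real‑analytic — on all of $\mathbb{R}^n$. Then $\phi=S_\zeta f=S_0f+R_\zeta f$, where $R_\zeta f(x)=\int_{\Bound}H_\zeta(x-y)f(y)\,ds(y)$; differentiating under the integral, $R_\zeta f\in C^\infty(\mathbb{R}^n)$ for every $f\in H^{1/2}(\Bound)$, so it is a harmless smooth perturbation in every jump or regularity statement. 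Part (i) is then immediate: for $x\notin\Bound$ the integrand is smooth in $x$, uniformly in $y\in\Bound$, and $\Delta_x G_\zeta(x-y)=0$ for $x\neq y$.

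For (ii) and (v) I would invoke the standard mapping and continuity properties of $S_0$ on a $C^{1,1}$ boundary (as in \cite{Nachman} and the references there, and Verchota‑type results on Lipschitz domains): the single layer $S_0f$ with density $f\in H^{1/2}(\Bound)$ belongs to $H^2(\Omega)$ and to $H^2(\Omega_{\rho}')$ for every $\rho>\rho_0$, and is continuous across $\Bound$ with common trace $(S_0|_{\Bound})f$; adding the smooth term $R_\zeta f$ preserves all of this, giving (ii) together with $\phi_+=\phi_-=(S_\zeta|_{\Bound})f$, and by the estimate \eqref{single_layer} with $s=\tfrac12$ this common trace lies in $H^{3/2}(\Bound)$, which is (v). Equivalently, once (i) and (v) are in hand, (ii) follows from elliptic regularity for the Dirichlet problem on the $C^{1,1}$ regions $\Omega$ and $\Omega_{\rho}'$, noting that on $\{|x|=\rho\}$ the function $\phi$ is $C^\infty$ because $G_\zeta(x-\cdot)$ has no singularity on $\Bound$ there.

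For (iii), fix $x$ with $|x|<\rho$ (any $\rho>\rho_0$) and expand $\phi$ and its normal derivative on the sphere: for $|y|=\rho$, $\phi(y)=\int_{\Bound}G_\zeta(y-z)f(z)\,ds(z)$ and $\partial_{\nu(y)}\phi(y)=\int_{\Bound}\partial_{\nu(y)}\!\big[G_\zeta(y-z)\big]f(z)\,ds(z)$ (differentiation under the integral is legitimate since $|y|=\rho>\rho_0\ge|z|$). By Fubini it suffices to show, for each fixed $z\in\Bound$,
\begin{align*}
\int_{|y|=\rho}\Big[G_\zeta(x-y)\,\partial_{\nu(y)}\!\big[G_\zeta(y-z)\big]-G_\zeta(y-z)\,\partial_{\nu(y)}\!\big[G_\zeta(x-y)\big]\Big]\,ds(y)=0 .
\end{align*}
This is Green's second identity on $B_\rho=\{|y|<\rho\}$ applied to $u(y)=G_\zeta(x-y)$ and $v(y)=G_\zeta(y-z)$: both have locally integrable $|\cdot|^{2-n}$ singularities interior to $B_\rho$, so after the usual excision of small balls and passage to the limit the boundary term equals the distributional volume term $\int_{B_\rho}(u\Delta v-v\Delta u)=-u(z)+v(x)=-G_\zeta(x-z)+G_\zeta(x-z)=0$. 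This gives the non‑limit form for all $|x|<\rho$, and letting $\rho\to\infty$ yields the limit form for every $x$.

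Finally (iv) is the classical jump relation for the normal derivative of the single layer on a $C^{1,1}$ boundary with density in $H^{1/2}(\Bound)$: the nontangential limits $\partial(S_0f)/\partial\nu_\pm$ exist and equal $\mp\tfrac12 f+B_0^{\dag}f$, where $B_0^{\dag}$ is the principal‑value operator with kernel $\partial_{\nu(x)}G_0(x-y)$; the smooth contribution $\partial_{\nu(x)}R_\zeta f$ carries no jump and combines with $B_0^{\dag}$ into $B_\zeta^{\dag}$, giving \eqref{normal_derivatives_single}. The genuinely delicate points — and the place where I would either argue carefully or simply quote \cite{Nachman} and the layer‑potential literature — are precisely the $C^{1,1}$ (rather than smooth) regularity in (ii) and the $C^{1,1}$ jump relation in (iv), together with the boundary estimate \eqref{single_layer}; everything else is differentiation under the integral and one line of Green's identity, and since $G_\zeta$ never sees the conductivity, the whole Lemma transfers to the complex setting with no change.
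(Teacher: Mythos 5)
Your proposal is correct, and it is essentially the argument the paper relies on: the paper states this lemma without proof as one of Nachman's established properties, and your route — splitting $G_{\zeta}=G_{0}+H_{\zeta}$ with $H_{\zeta}$ entire harmonic, quoting the classical $C^{1,1}$/Lipschitz layer-potential mapping and jump results for the $G_0$ part, and proving the radiation condition (iii) by Fubini plus Green's identity for the two fundamental solutions with the exact cancellation $-G_{\zeta}(x-z)+G_{\zeta}(x-z)=0$ — is precisely the standard proof in \cite{Nachman}, unaffected by $q$ being complex. No gaps worth flagging beyond the points you already acknowledge as quotations from the literature (the $C^{1,1}$ regularity in (ii) and the jump relation in (iv)).
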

		
		\begin{lemma}\label{doublelayer}
			If $f\in H^{3/2}(\Bound)$ the function $\psi = D_{\zeta}f$ defined in $\mathbb{R}^n\setminus\Bound$ has the properties (i), (ii) and (iii) of the Lemma \ref{singlelayer}.
			
			Moreover, the non-tangential limits $\psi_+,\,\psi_-$ of $\psi$ as we approach the boundary from outside and inside of $\Omega$, respectively, exist and satisfy
			\begin{align}
			\psi_{\pm}(x) = \pm\frac{1}{2}f(x) + B_{\zeta}f(x),\text{ for almost every } x\in\Bound.
			\end{align}\\
		\end{lemma}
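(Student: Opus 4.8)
The plan is to run the argument exactly as for \lemref{singlelayer}, exploiting that $D_\zeta$, just like $S_\zeta$, is assembled purely from the fundamental solution $G_\zeta$, so nothing changes when the ambient scalars are complex; the only place the hypotheses are spent is the $C^{1,1}$ regularity of $\Bound$. First I would use the splitting $G_\zeta = G_0 + H_\zeta$, where $G_0$ is the standard fundamental solution of $-\Delta$ on $\mathbb{R}^n$ and $H_\zeta$ is entire and harmonic (the remark following (\ref{g_zeta})), which gives $\psi = D_\zeta f = D_0 f + R_\zeta f$ with $R_\zeta f(x) = \int_{\Bound}\frac{\partial H_\zeta}{\partial\nu(y)}(x-y)\,f(y)\,ds(y)$. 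Since $H_\zeta(x-\cdot)$ is smooth on all of $\mathbb{R}^n$ and $\Delta_x H_\zeta(x-y)=0$, the term $R_\zeta f$ is harmonic and real-analytic everywhere: it produces no jump across $\Bound$, its nontangential limits from the two sides coincide with its trace, and it belongs to $H^2$ of every bounded open set. Hence I expect every assertion of the lemma to reduce to the corresponding statement for the classical double layer $D_0 f$, after which one restores $R_\zeta f$ and records $B_\zeta f = B_0 f + R_\zeta f|_{\Bound}$.

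For $D_0 f$ with $f\in H^{3/2}(\Bound)$, property (i) is immediate by differentiating under the integral sign, since $\Delta_x\frac{\partial G_0}{\partial\nu(y)}(x-y)=\frac{\partial}{\partial\nu(y)}\Delta_x G_0(x-y)=0$ for $x\neq y$. For the jump relations I would first take $f$ smooth and invoke the classical double layer computation, which remains valid on a $C^{1,1}$ surface because the kernel defines a Calder\'on--Zygmund operator on the Lipschitz graph pieces of $\Bound$, so the nontangential limits exist almost everywhere; this gives $\psi_\pm(x)=\pm\tfrac12 f(x)+B_0 f(x)$ and, in addition, that the normal derivative of $\psi$ is continuous across $\Bound$. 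I would then pass to general $f\in H^{3/2}(\Bound)$ by density, using the boundedness properties of the layer operators recorded in the Proposition preceding this lemma (in particular $B_\zeta$, hence $B_0$, bounded on $H^{3/2}(\Bound)$); this also yields $\psi_\pm\in H^{3/2}(\Bound)$. Property (ii) would then follow from elliptic regularity on $C^{1,1}$ domains: inside $\Omega$ the function $\psi$ is harmonic with Dirichlet datum $\psi_-=\tfrac12 f+B_0 f\in H^{3/2}(\Bound)$, hence $\psi\in H^2(\Omega)$; and on each bounded $C^{1,1}$ domain $\Omega_\rho'$ with $\rho>\rho_0$ the function $\psi$ is harmonic with datum $\psi_+\in H^{3/2}(\Bound)$ on $\Bound$ and smooth datum on $\{|y|=\rho\}$, hence $\psi\in H^2(\Omega_\rho')$.

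For property (iii) I would mimic the proof of \lemref{singlelayer}(iii): for smooth $f$, apply Green's second identity to $\psi$ and $G_\zeta(x-\cdot)$ on $\Omega_\rho'$. Since $G_\zeta$ is a fundamental solution of $-\Delta$ and $\psi$ is harmonic in $\Omega_\rho'$, this expresses the quantity in (\ref{Sommerfeld}) through $\psi(x)$ and a boundary integral over $\Bound$ involving $\psi_\pm$ and $\partial_\nu\psi$; inserting the jump relations ($\psi_+-\psi_-=f$ and $\partial_{\nu+}\psi=\partial_{\nu-}\psi$) together with Green's identity for the pair $(\psi,\,G_\zeta(x-\cdot))$ on $\Omega$ and the defining formula $\psi=D_\zeta f$, one finds that the expression in (\ref{Sommerfeld}) vanishes for every $\rho>\rho_0$ and every $|x|<\rho$, not merely in the limit. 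A density step in $H^{3/2}(\Bound)$ then extends (iii) to all admissible $f$, and $R_\zeta f$, being entire and harmonic, trivially satisfies the same identity. The step I expect to be the main obstacle is the regularity bookkeeping on a merely $C^{1,1}$ boundary: one must ensure that the classical jump relations survive, that the $H^2$ elliptic estimate holds in this class, and that the layer operators enjoy the mapping properties of the preceding Proposition — this is precisely where the $C^{1,1}$ hypothesis is used, whereas the passage to complex scalars costs nothing.
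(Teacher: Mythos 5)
The paper does not actually prove this lemma --- it is quoted from Nachman's original work --- and your sketch follows essentially that same standard route (split $G_{\zeta}=G_0+H_{\zeta}$ with $H_{\zeta}$ smooth and harmonic so that only the classical double layer on the $C^{1,1}$ boundary matters, obtain (ii) from $H^2$ elliptic regularity, and obtain (iii) from Green's identity combined with the jump relations $\psi_+-\psi_-=f$, $\partial\psi/\partial\nu_+=\partial\psi/\partial\nu_-$), so it is correct in substance. The only caveats are routine bookkeeping: the a.e.\ existence of nontangential limits for general $f\in H^{3/2}(\partial\Omega)$ should be justified by nontangential maximal-function estimates for layer potentials on Lipschitz/$C^{1,1}$ boundaries rather than by $H^{3/2}$-boundedness of $B_0$ alone, and for (ii) one must identify $\psi|_{\Omega}$ (and $\psi|_{\Omega_{\rho}'}$) with the variational solution having Dirichlet datum $\psi_{\mp}$ before invoking the $H^2$ estimate --- both standard steps that go through exactly as you indicate.
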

	
		\begin{lemma}\label{harmonicestimate}
			Let $\Omega$ be a bounded $C^{1,1}$-domain in $\mathbb{R}^n,\, n\geq 3$.	The Faddeev fundamental solution $G_{\zeta}$ can be given through the decomposition $$G_{\zeta}(x)=G_{0}(x)+H_{\zeta}(x),$$ where $G_0$ is the classical fundamental solution and $H_{\zeta}$ is an harmonic function.
			
			Moreover, the single and double layer operators have a similar decomposition and, for our own convenience, we present here the case for the single layer. For $f\in H^{1/2}(\Bound)$ we have
			$$S_{\zeta}f(x) = S_{0}f(x) + \int_{\Bound} H_{\zeta}(x-y)f(y)\,ds(y) =: S_0f(x)+\mathcal{H}_{\zeta}f(x).$$
			
			Further, it holds:
			\begin{align*}
				\|\mathcal{H}\|_{\mathcal{L}(H^{1/2}(\Bound), H^{3/2}(\Bound)} \leq C|\zeta|^{n-2},
			\end{align*}
			
			where the constant $C$ only depends on the domain.
			
		\end{lemma}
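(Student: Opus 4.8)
The plan is to reduce the assertion to a pointwise bound on the harmonic kernel $H_\zeta$ and then transfer that bound to $H^{3/2}(\Bound)$ by elliptic regularity. The harmonicity and the decompositions are the easy part: both $G_\zeta$ and $G_0$ are fundamental solutions of $-\Delta$ on $\mathbb{R}^n$ (for $G_\zeta$ this is recorded just after \eqref{g_zeta}), so $H_\zeta:=G_\zeta-G_0$ solves $-\Delta H_\zeta=0$ in $\mathcal D'(\mathbb{R}^n)$ and hence, by Weyl's lemma, is a real-analytic harmonic function on all of $\mathbb{R}^n$; and since the layer potentials are linear in the kernel, $S_\zeta f=S_0f+\mathcal H_\zeta f$ with $\mathcal H_\zeta f(x)=\int_{\Bound}H_\zeta(x-y)f(y)\,ds(y)$, and likewise $D_\zeta f=D_0f+\int_{\Bound}\frac{\partial H_\zeta}{\partial\nu(y)}(x-y)f(y)\,ds(y)$. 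So everything comes down to estimating $\mathcal H_\zeta$.

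The computational heart is an explicit formula for $H_\zeta$. Write $\zeta=\zeta_R+i\zeta_I$ with $\zeta_R\cdot\zeta_I=0$ and $|\zeta_R|=|\zeta_I|=|\zeta|/\sqrt2$, choose orthonormal coordinates with $\zeta_R=\frac{|\zeta|}{\sqrt2}e_1$ and $\zeta_I=\frac{|\zeta|}{\sqrt2}e_2$, and set $x''=(x_1,x_3,\dots,x_n)$ and, similarly, $\eta''$. In $G_\zeta=e^{ix\cdot\zeta}g_\zeta$ the substitution $\eta=\xi+\zeta$ turns the denominator $|\xi|^2+2\zeta\cdot\xi$ into $\eta\cdot\eta$ (because $\zeta\cdot\zeta=0$) while shifting the $\eta_2$-contour onto $\mathbb{R}+i|\zeta|/\sqrt2$; subtracting $G_0(x)=\frac{1}{(2\pi)^n}\int_{\mathbb{R}^n}\frac{e^{ix\cdot\eta}}{|\eta|^2}\,d\eta$ and pushing the $\eta_2$-contour back to $\mathbb{R}$, the only pole of $\eta_2\mapsto(\eta_2^2+|\eta''|^2)^{-1}$ that is crossed is $\eta_2=i|\eta''|$, and it is crossed exactly when $|\eta''|<|\zeta|/\sqrt2$, so the residue theorem yields
\begin{align*}
H_\zeta(x)=-\frac{\pi}{(2\pi)^n}\int_{|\eta''|<|\zeta|/\sqrt2}\frac{e^{ix''\cdot\eta''}\,e^{-x_2|\eta''|}}{|\eta''|}\,d\eta''.
\end{align*}
Each integrand is harmonic in $x\in\mathbb{R}^n$, which re-confirms the harmonicity. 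From this formula, for $|x|\le R$,
\begin{align*}
|H_\zeta(x)|\le\frac{\pi}{(2\pi)^n}\,e^{R|\zeta|/\sqrt2}\int_{|\eta''|<|\zeta|/\sqrt2}\frac{d\eta''}{|\eta''|}=c(n)\,e^{R|\zeta|/\sqrt2}\,|\zeta|^{n-2},
\end{align*}
using $\int_{|\eta''|<\rho}|\eta''|^{-1}\,d\eta''=c(n)\rho^{n-2}$ in $\mathbb{R}^{n-1}$ for $n\ge3$. The exponential prefactor is the one genuinely delicate point: it is harmless precisely in the low-frequency range for which these estimates are intended (as in \cite{Knudsen_3D}), i.e.\ for $|\zeta|$ in a bounded set, where it is absorbed into the constant; taking $R:=\mathrm{diam}(\Omega)+2$ this gives $|H_\zeta(z)|\le C(\Omega)|\zeta|^{n-2}$ for $|z|\le R$. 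This residue identity --- essentially Faddeev's, and the one behind the weighted estimates of \cite{Sylvester_Uhlmann} and \cite{Nachman} --- is the step I expect to need the most care.

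Finally, the operator estimate follows by elliptic regularity. Since $H_\zeta(\cdot-y)$ is harmonic on $\mathbb{R}^n$ for each $y$, $u:=\mathcal H_\zeta f$ is harmonic on $\mathbb{R}^n$. Fix bounded open neighbourhoods $W,W'$ of $\Bound$ with $\overline W\subset W'$, small enough that $|x-y|\le R$ for all $x\in W'$, $y\in\Bound$. Interior estimates give $\|u\|_{H^2(W)}\le C\|u\|_{L^2(W')}$ and --- using $\Bound\in C^{1,1}$ --- the trace theorem gives $\|u\|_{H^{3/2}(\Bound)}\le C(\Omega)\|u\|_{H^2(W)}$. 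Estimating $\|u\|_{L^2(W')}$ by Cauchy--Schwarz in $y$ and then in $x$, together with the pointwise bound above,
\begin{align*}
\|\mathcal H_\zeta f\|_{L^2(W')}\le|W'|^{1/2}|\Bound|^{1/2}\Big(\sup_{x\in W',\,y\in\Bound}|H_\zeta(x-y)|\Big)\|f\|_{L^2(\Bound)}\le C(\Omega)\,|\zeta|^{n-2}\,\|f\|_{H^{1/2}(\Bound)},
\end{align*}
and chaining the three inequalities gives $\|\mathcal H_\zeta\|_{\mathcal L(H^{1/2}(\Bound),H^{3/2}(\Bound))}\le C(\Omega)|\zeta|^{n-2}$. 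The one real obstacle is thus the contour/residue identity for $H_\zeta$ together with the bookkeeping of the factor $e^{c\,\mathrm{diam}(\Omega)|\zeta|}$ --- which is why the clean bound, with $C$ depending only on $\Omega$ (and $n$), is stated and used for $|\zeta|$ in a bounded range.
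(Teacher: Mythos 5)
Your argument is correct and is essentially the proof the paper delegates to \cite{Knudsen_3D}: the paper's own ``proof'' of this lemma is only a citation, and your route --- the residue/contour computation giving the explicit formula for $H_\zeta=G_\zeta-G_0$, the resulting pointwise bound $|H_\zeta(x)|\le C(R)\,e^{cR|\zeta|}|\zeta|^{n-2}$ on compact sets, and the passage to $\mathcal{L}\bigl(H^{1/2}(\Bound),H^{3/2}(\Bound)\bigr)$ using harmonicity of $\mathcal{H}_\zeta f$, interior elliptic estimates and the trace theorem --- is exactly the argument of that reference (stated there for $n=3$, which you carry out for general $n\ge 3$). Your caveat is also the right one: the clean bound $C|\zeta|^{n-2}$ with $C$ depending only on $\Omega$ requires restricting $|\zeta|$ to a bounded set so the factor $e^{cR|\zeta|}$ can be absorbed, which is consistent with how the lemma is used for small $\zeta$ in Proposition \ref{InvertibilityBIE}, even though the statement in the paper omits this restriction.
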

	
		\begin{proof}
			See \cite{Knudsen_3D} for further details.\\
		\end{proof}
		
		To complete the preliminaries, we cast our focus onto the solvability of interior problem. This result will be the principal difference from Nachman's work and what some of the readers may have noticed already. 
		
		\begin{proposition}\label{uniqueness_Schro}
			Let $\Omega$ be a bounded $C^{1,1}-$domain in $\mathbb{R}^n,\, n\geq 3$. Suppose that $q\in L^{\infty}(\bar{\Omega})$ is complex-valued and that $0$ is not a Dirichlet eigenvalue of $(-\Delta+q)$ in $\Omega$.
			Then for every $f\in H^{3/2}(\Bound)$ there is a unique $w \in H^{2}(\Omega)$ such that
			\begin{align}\label{interiorDS}
				\begin{cases}
					\left(-\Delta+q\right)w=0 \text{ in } \Omega \\
					\left.w\right|_{\Bound}= f.
				\end{cases}		
			\end{align}
			
			The solution operator is defined by $P_qf := w$ and has the mapping property $$P_q: H^{3/2}(\Bound) \rightarrow H^{2}(\Omega).$$
			
			Moreover, the Dirichlet-to-Neumann map operator has the mapping property:
			$$\Lambda_q: H^{3/2}(\Bound)\rightarrow H^{1/2}(\Bound).$$ 
		\end{proposition}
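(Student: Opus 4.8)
The plan is to reduce the inhomogeneous Dirichlet problem to one with zero boundary data, to solve that homogeneous‑boundary problem in $X:=H^2(\Omega)\cap H^1_0(\Omega)$ by a Fredholm argument resting on elliptic regularity for $C^{1,1}$‑domains, and then to read off the two mapping properties as boundedness of compositions of standard operators. First I would use that, since $\Omega$ is a bounded $C^{1,1}$‑domain, the trace map has a bounded right inverse $E: H^{3/2}(\Bound)\to H^2(\Omega)$; writing the sought solution as $w=Ef+v$ turns (\ref{interiorDS}) into the problem of finding $v\in X$ with $(-\Delta+q)v=g$, where $g:=-(-\Delta+q)Ef$ lies in $L^2(\Omega)$ because $\Delta Ef\in L^2(\Omega)$ (as $Ef\in H^2$) and $q\,Ef\in L^2(\Omega)$ (as $q\in L^\infty$). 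Conversely, any $H^2$‑solution of (\ref{interiorDS}) arises this way, so it suffices to treat the case of zero boundary data.

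Then I would regard the Dirichlet Laplacian $(-\Delta)_D$ as an operator $X\to L^2(\Omega)$. The key analytic input — and the precise reason the boundary hypothesis is $C^{1,1}$ rather than merely Lipschitz — is the classical global $H^2$ a priori estimate for the Dirichlet problem on $C^{1,1}$‑domains, which makes $(-\Delta)_D: X\to L^2(\Omega)$ a bijection with bounded inverse and makes $((-\Delta)_D)^{-1}: L^2(\Omega)\to L^2(\Omega)$ compact. Using this, on $X$ the operator $T:=I+((-\Delta)_D)^{-1}M_q$ (with $M_q$ the bounded multiplication operator by $q$) is a compact perturbation of the identity, since $u\mapsto M_q u$ sends $X$ compactly into $L^2(\Omega)$ by Rellich while $((-\Delta)_D)^{-1}$ carries $L^2(\Omega)$ boundedly back into $X$; hence $T$ is Fredholm of index zero on $X$. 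It has trivial kernel, because $Tu=0$ forces $(-\Delta+q)u=0$ with $u|_{\Bound}=0$, hence $u=0$ since $0$ is not a Dirichlet eigenvalue of $-\Delta+q$ (for complex $q$ this hypothesis enters only through this vanishing statement). Therefore $T$ is invertible on $X$, and from the factorization $-\Delta+q=(-\Delta)_D\,T$ I would conclude that $-\Delta+q: X\to L^2(\Omega)$ is a bijection with bounded inverse; this produces the unique $v\in X$ with the bound $\|v\|_{H^2(\Omega)}\le C\|g\|_{L^2(\Omega)}$, and hence the unique $w=Ef+v\in H^2(\Omega)$ solving (\ref{interiorDS}), with uniqueness in (\ref{interiorDS}) also following from the eigenvalue hypothesis.

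For the mapping properties I would note that $P_qf=Ef+v$, where $v$ depends linearly and boundedly on $g$, hence on $f$ through $E$, so $\|P_qf\|_{H^2(\Omega)}\le C\|f\|_{H^{3/2}(\Bound)}$ and $P_q: H^{3/2}(\Bound)\to H^2(\Omega)$ is bounded. Since $w=P_qf\in H^2(\Omega)$ gives $\nabla w\in H^1(\Omega)^n$, whose trace on $\Bound$ lies in $H^{1/2}(\Bound)^n$, and since the unit normal $\nu$ is Lipschitz on a $C^{1,1}$‑boundary so that multiplication by $\nu$ preserves $H^{1/2}(\Bound)$, I get $\Lambda_qf=\nu\cdot(\nabla w)|_{\Bound}\in H^{1/2}(\Bound)$ with $\|\Lambda_qf\|_{H^{1/2}(\Bound)}\le C\|w\|_{H^2(\Omega)}\le C\|f\|_{H^{3/2}(\Bound)}$, so $\Lambda_q: H^{3/2}(\Bound)\to H^{1/2}(\Bound)$ is bounded. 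I expect the one genuinely delicate ingredient to be the invocation of global $H^2$‑regularity for the Dirichlet Laplacian on a $C^{1,1}$‑domain: $C^{1,1}$ is exactly the borderline regularity for which this estimate holds, and it is what forces the $C^{1,1}$ assumption to persist through the rest of the reconstruction scheme; the remaining steps — trace and extension theorems, Rellich compactness, the Fredholm alternative, and the behaviour of Lipschitz multipliers on $H^{1/2}$ — are standard, and nothing uses reality of $q$.
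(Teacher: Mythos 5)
Your proof is correct and takes essentially the same approach as the paper: both arguments rest on $H^2$ solvability of the Dirichlet Laplacian on $C^{1,1}$ domains (Gilbarg--Trudinger, Theorem 9.15), compactness of multiplication by $q$ through the embedding $H^2(\Omega)\hookrightarrow L^2(\Omega)$, and a Fredholm-index-zero argument whose injectivity comes from the assumption that $0$ is not a Dirichlet eigenvalue. The only differences are bookkeeping: you homogenize the boundary data with an extension operator and invert $-\Delta+q$ on $H^2(\Omega)\cap H^1_0(\Omega)$, whereas the paper treats $u\mapsto\left((-\Delta+q)u,\,\mathrm{tr}\,u\right)$ as a map into $L^2(\Omega)\times H^{3/2}(\partial\Omega)$ and perturbs the corresponding Laplacian operator; you also spell out the trace argument giving the mapping property of $\Lambda_q$, which the paper leaves implicit.
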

	
		\begin{proof}
			The proof follows by the studying first the Laplacian and showing that multiplication by $q$ is a compact operator from $H^2(\Omega)$ to $L^2(\Omega)$.
			
			First, let $$P_0: H^2(\Omega)\rightarrow L^2(\Omega)\times H^{3/2}(\Bound), \; u\mapsto \left(\, - \Delta u,\, \text{tr } u\right).$$
			
			By the definition of $H^2(\Omega)$ and the trace properties on this space and $C^{1,1}-$domains the operator $P_0$ is linear and bounded. By Theorem 9.15. of \cite{Gilbarg_Trudinger} and under our conditions on the domain, there always exists a unique solution in $H^2(\Omega)$ of
			\begin{align*}
				\begin{cases}
				-&\Delta u = f \\
				&\left.u\right|_{\Bound} = g
				\end{cases}.
			\end{align*}
			
			Therefore, the operator $P_0$ is bijective and invertible. Thus, it is Fredholm of index zero.
			\\
			
			Analogously, we define the operator:
			
			$$P_q: H^2(\Omega) \rightarrow L^2(\Bound)\times H^{3/2}(\Bound), \, u \mapsto \left(\left[-\Delta+q\right]u,\; \text{tr } u\right).$$
			
			Then, the difference operator $P_q-P_0$ defined in the same spaces maps $u$ to $(qu,\; 0)$. 
			
			Since, the embedding $H^2(\Omega)\hookrightarrow L^2(\Omega)$ is compact, then it immediately follows multiplication by $q\in L^\infty(\Omega)$ is a compact operator. Hence, by definition $P_q-P_0$ is a compact operator in these spaces.
			\\
			
			Since, $P_q = P_0 + (P_q-P_0)$ is the sum of a Fredholm of index zero and a compact operator, it is a Fredholm operator of index zero. Thus, to show invertibility we prove that $\text{ker } P_q = \{\}$. Let $w \in \text{ker } P_q$. By definition this implies $w$ is a solution in $H^2(\Omega)$ of
			
			\begin{align*}
				\begin{cases}
				-\Delta w+qw=0 \\
				\left.w\right|_{\Bound} = 0
				\end{cases},
			\end{align*}
			
			but due to the assumption of $0$ not being a Dirichlet eigenvalue of $(-\Delta+q)$ in $\Omega$ it follows that $w\equiv0$.
			\\			
		\end{proof}
	
		\textbf{Remark: }Our main assumption on the admittivity is $\gamma \in C^{1,1}(\bar{\Omega})$, thus it is in $H^2(\Omega).$ Therefore, for potentials $q$ given by the complex-conductivity it holds:
		
		\begin{corollary}
			Let $\Omega$ be a bounded $C^{1,1}-$domain in $\mathbb{R}^n,\, n\geq 3$. For $\gamma\in C^{1,1}(\bar{\Omega})$ such that  $\text{Re }\gamma\geq c>0$. 
			
			Then $q\in L^{\infty}(\Omega)$ given by $q=\Delta(\gamma^{1/2})/\gamma^{1/2}$ is well-defined.
			
			Then the unique solution $w\in H^2(\Omega)$ of 
			\begin{align}
				\begin{cases}
				-\Delta w+qw=0\\
				\left.w\right|_{\Bound}=\gamma^{1/2}
				\end{cases}
			\end{align}
			
			is $w\equiv \gamma^{1/2}$.
		\end{corollary}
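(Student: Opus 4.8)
The plan is to recognize the statement as an almost immediate consequence of Proposition~\ref{uniqueness_Schro}, once two routine points are settled: that $\gamma^{1/2}$ is a legitimate $H^2(\Omega)$ function making $q$ a bona fide $L^\infty$ potential, and that $0$ is not a Dirichlet eigenvalue of $-\Delta+q$, so that the proposition applies.

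First I would fix a branch of the square root. Since $\text{Re }\gamma\geq c>0$, the range of $\gamma$ lies in the open right half-plane, which is simply connected and avoids $0$; hence $z\mapsto z^{1/2}$ is holomorphic there and $\gamma^{1/2}$ is unambiguously defined. Composing the $C^{1,1}(\bar\Omega)$ function $\gamma$ with this smooth map keeps $\gamma^{1/2}\in C^{1,1}(\bar\Omega)\subset H^2(\Omega)$, so $\Delta\gamma^{1/2}\in L^{\infty}(\Omega)$. Moreover $|\gamma|\geq\text{Re }\gamma\geq c$ gives $|\gamma^{1/2}|\geq\sqrt{c}>0$, so $q=\Delta(\gamma^{1/2})/\gamma^{1/2}$ is well-defined and belongs to $L^{\infty}(\Omega)$, and its trace on $\Bound$ lies in $H^{3/2}(\Bound)$.

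Next I would check the eigenvalue condition. If $w\in H^2(\Omega)$ solves $-\Delta w+qw=0$ with $\left.w\right|_{\Bound}=0$, the substitution $u=\gamma^{-1/2}w$ (valid since $\gamma^{-1/2}\in C^{1,1}$) produces $u\in H^1(\Omega)$ with $\nabla\cdot(\gamma\nabla u)=0$ and $\left.u\right|_{\Bound}=0$; testing the weak formulation against $\bar u$ and taking real parts gives $\int_{\Omega}\text{Re }\gamma\,|\nabla u|^2\,dx=0$, whence $\nabla u\equiv 0$ and, by the boundary condition together with Poincaré's inequality, $u\equiv 0$, i.e. $w\equiv 0$. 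Thus $0$ is not a Dirichlet eigenvalue of $-\Delta+q$ in $\Omega$, and Proposition~\ref{uniqueness_Schro} applies with $f=\left.\gamma^{1/2}\right|_{\Bound}\in H^{3/2}(\Bound)$, furnishing a unique solution $w\in H^2(\Omega)$ of \eqref{interiorDS}.

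Finally I would exhibit $\gamma^{1/2}$ itself as such a solution: a direct computation gives $-\Delta\gamma^{1/2}+q\gamma^{1/2}=-\Delta\gamma^{1/2}+\frac{\Delta\gamma^{1/2}}{\gamma^{1/2}}\gamma^{1/2}=0$ in $\Omega$, while trivially $\left.\gamma^{1/2}\right|_{\Bound}=\gamma^{1/2}$. By the uniqueness just established, $w\equiv\gamma^{1/2}$. There is essentially no serious obstacle here; the only point demanding a little care is the regularity bookkeeping — verifying that $\gamma^{1/2}\in C^{1,1}(\bar\Omega)$ (hence in $H^2(\Omega)$) and that the conjugation $u=\gamma^{-1/2}w$ is an isomorphism between the relevant solution spaces — which is precisely where the hypothesis $\gamma\in C^{1,1}(\bar\Omega)$, rather than merely $\gamma\in L^{\infty}(\bar\Omega)$, is used.
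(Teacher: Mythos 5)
Your proposal is correct and follows the same route the paper intends: the paper states this corollary without proof as an immediate consequence of Proposition~\ref{uniqueness_Schro} together with the earlier observations that $\gamma\in C^{1,1}(\bar\Omega)$ with $\operatorname{Re}\gamma\geq c>0$ makes $q=\Delta(\gamma^{1/2})/\gamma^{1/2}\in L^{\infty}(\Omega)$ well-defined and that $0$ is not a Dirichlet eigenvalue of $-\Delta+q$ (via the substitution $u=\gamma^{-1/2}w$ and the conductivity equation). You have merely written out these details, including the verification that $\gamma^{1/2}$ itself solves the boundary value problem, so uniqueness forces $w\equiv\gamma^{1/2}$.
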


		\section{Boundary Integral Equation}
		
		The properties of the previous section allows us to establish a one-to-one correspondence between the solution of a boundary integral equation and of the following exterior problem:
		
		\begin{align}\label{exterior_prob}
			&(i)\;\Delta\psi = 0,\text{ in } \Omega':=\mathbb{R}^n\setminus\bar{\Omega} \\	
		\nonumber	&(ii)\;\psi\in H^2(\Omega_{\rho}'),\text{ for any } \rho>\rho_0, \\
		\nonumber	&(iii)\;\psi(x,\zeta)-e^{ix\cdot\zeta} \text{ satisfies } (\ref{Sommerfeld}),\\
		\nonumber	&(iv)\;\frac{\partial\psi}{\partial\nu_+} = \Lambda_q\psi\text{ on } \Bound.\\\nonumber
		\end{align}
		
		In this section, we assume that $\Omega$ is a bounded $C^{1,1}$-domain in $\mathbb{R}^n$, $n\geq 3$ and $q\in L^{\infty}(\Omega)$ is a complex-potential for which $0$ is not a Dirichlet eigenvalue.
				
		\begin{lemma}\label{Bitch2}
			Let $\zeta\in\mathcal{V}$.
			
			\begin{enumerate}[label=(\alph*)]\label{BIeq}
				\item Suppose $\psi$ solves the exterior problem (\ref{exterior_prob}). Then its trace $f_{\zeta}=\psi_+=\left.\psi\right|_{\Bound}$ solves the boundary integral equation:
				\begin{align}\label{boundaryInteq}
				f_{\zeta} = e^{ix\cdot\zeta} - \left[S_{\zeta}\Lambda_q - B_{\zeta} - \frac{1}{2}I\right]f_{\zeta}.
				\end{align}
				
				\item Conversely, suppose $f_{\zeta}\in H^{3/2}(\Bound)$ solves (\ref{boundaryInteq}). Then the function $\psi(x,\zeta)$ defined for $x\in\Omega'$ by
				\begin{align}
				\psi(x,\zeta) = e^{ix\cdot\zeta} - \left(S_{\zeta}\Lambda_q - D_{\zeta}\right)f_{\zeta}(x)
				\end{align}
				solves the above exterior problem under all conditions. Furthermore, $\left.\psi\right|_{\Bound} = f_{\zeta}$. 
			\end{enumerate}
		\end{lemma}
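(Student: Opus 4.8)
The argument is the classical single-/double-layer bootstrap, run with the Faddeev fundamental solution $G_\zeta$ in place of $G_0$; I stress at the outset that nothing in it uses that $q$ is real — the one place where that might matter, unique solvability of the interior Dirichlet problem, is already handled for complex $q$ by Proposition~\ref{uniqueness_Schro}. Throughout, $\nu$ is the outer unit normal of $\Omega$ and $\pm$ denotes the nontangential trace from $\Omega'$, respectively from $\Omega$.

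\emph{Part (a).} The plan is to produce a Green representation of $\psi$ on $\Omega'$. Put $u:=\psi-e^{ix\cdot\zeta}$; since $\zeta\cdot\zeta=0$ the exponential is harmonic, so $u$ is harmonic in $\Omega'$, lies in $H^2(\Omega_\rho')$ for every $\rho>\rho_0$ by (ii), and satisfies (\ref{Sommerfeld}) by (iii). Fixing $x\in\Omega'$ and applying Green's second identity on $\Omega_\rho'$ to $u$ and $y\mapsto G_\zeta(x-y)$ (distributional Laplacian $-\delta(x-\cdot)$), the volume term gives $-u(x)$, the spherical term on $|y|=\rho$ tends to $0$ as $\rho\to\infty$ precisely by (\ref{Sommerfeld}), and the term on $\Bound$ leaves $u=D_\zeta(u_+)-S_\zeta(\partial_\nu u_+)$ on $\Omega'$. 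Inserting $u_+=f_\zeta-e^{ix\cdot\zeta}$ and, via (iv), $\partial_\nu u_+=\Lambda_q f_\zeta-\partial_\nu e^{ix\cdot\zeta}$, the unwanted exponential terms cancel: Green's identity in $\Omega$ applied to $y\mapsto e^{iy\cdot\zeta}$ and $y\mapsto G_\zeta(x-y)$ (both harmonic there, as $x\notin\bar\Omega$) gives $D_\zeta(e^{iy\cdot\zeta})(x)=S_\zeta(\partial_\nu e^{iy\cdot\zeta})(x)$ for $x\in\Omega'$. One is left with $\psi=e^{ix\cdot\zeta}+D_\zeta f_\zeta-S_\zeta(\Lambda_q f_\zeta)$ on $\Omega'$; taking the trace from $\Omega'$, using the jump relation $(D_\zeta f_\zeta)_+=\tfrac12 f_\zeta+B_\zeta f_\zeta$ of Lemma~\ref{doublelayer} and the continuity of the trace of $S_\zeta$ (Lemma~\ref{singlelayer}(v)), yields $f_\zeta=e^{ix\cdot\zeta}+\tfrac12 f_\zeta+B_\zeta f_\zeta-S_\zeta\Lambda_q f_\zeta$, which is (\ref{boundaryInteq}).

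\emph{Part (b).} Let $f_\zeta\in H^{3/2}(\Bound)$ solve (\ref{boundaryInteq}). As $\Lambda_q f_\zeta\in H^{1/2}(\Bound)$ by Proposition~\ref{uniqueness_Schro}, both layer potentials defining $\psi$ make sense, and properties (i)–(iii) of (\ref{exterior_prob}) follow at once from Lemmas~\ref{singlelayer} and \ref{doublelayer}: $\psi$ is harmonic and $H^2$ off $\Bound$, and $\psi-e^{ix\cdot\zeta}=D_\zeta f_\zeta-S_\zeta(\Lambda_q f_\zeta)$ satisfies (\ref{Sommerfeld}) since each term does. Taking the trace from $\Omega'$ exactly as in Part (a) gives $\psi_+=e^{ix\cdot\zeta}+\tfrac12 f_\zeta+B_\zeta f_\zeta-S_\zeta\Lambda_q f_\zeta$, and (\ref{boundaryInteq}) is precisely the statement that this equals $f_\zeta$; hence $\left.\psi\right|_{\Bound}=f_\zeta$. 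What remains is the transmission condition (iv). I would set $w:=P_q f_\zeta\in H^2(\Omega)$, so $\left.w\right|_{\Bound}=f_\zeta$ and $\Lambda_q f_\zeta=\left.\partial_\nu w\right|_{\Bound}$. Green's identity in $\Omega$ against $y\mapsto G_\zeta(x-y)$ for $x\in\Omega'$ (where the $\delta$ does not fire), together with $\Delta w=qw$, gives $D_\zeta f_\zeta-S_\zeta(\Lambda_q f_\zeta)=-\mathbf{G}_{\zeta}(qw)$ on $\Omega'$, hence $\psi=e^{ix\cdot\zeta}-\mathbf{G}_{\zeta}(qw)$ there. Now define $\tilde w:=e^{ix\cdot\zeta}-\mathbf{G}_{\zeta}(qw)$ on all of $\mathbb{R}^n$. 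By Proposition~\ref{conv_est} and the decomposition $G_\zeta=G_0+H_\zeta$ of Lemma~\ref{harmonicestimate}, $\mathbf{G}_{\zeta}(qw)\in H^2_{\mathrm{loc}}(\mathbb{R}^n)$ with $-\Delta\mathbf{G}_{\zeta}(qw)=qw$ on $\Omega$, so $\tilde w\in H^2_{\mathrm{loc}}(\mathbb{R}^n)$ and $-\Delta\tilde w=-qw=-\Delta w$ on $\Omega$; thus $\tilde w-w$ is harmonic in $\Omega$ and has the same trace on $\Bound$ as $\psi=\tilde w$ from $\Omega'$, namely $f_\zeta$, so $\tilde w-w$ has zero boundary trace and vanishes on $\Omega$ ($0$ being no Dirichlet eigenvalue of $-\Delta$). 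Therefore $\tilde w$ is one $H^2_{\mathrm{loc}}$ function equal to $\psi$ on $\Omega'$ and to $w$ on $\Omega$; consequently $\nabla\psi$ has a continuous one-sided limit on $\Bound$ equal to $\nabla w$, and in particular $\partial\psi/\partial\nu_+=\left.\partial_\nu w\right|_{\Bound}=\Lambda_q f_\zeta=\Lambda_q\psi$, which is (iv).

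The individual steps are routine; what needs care is the bookkeeping — keeping orientations and the jump/trace relations of Lemmas~\ref{singlelayer}–\ref{doublelayer} consistent, and forcing the spherical term in the representation formula to vanish via (\ref{Sommerfeld}). The one genuine idea, in Part (b), is to recognise $e^{ix\cdot\zeta}-\mathbf{G}_{\zeta}(qw)$ as the common $H^2_{\mathrm{loc}}$ extension of $\psi$ and of $w=P_q f_\zeta$; it is precisely at the step ``$\tilde w=w$ on $\Omega$'' that the boundary integral equation (\ref{boundaryInteq}) is used.
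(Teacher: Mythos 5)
Your argument is correct, and part (a) together with the trace identity $\psi_+=f_\zeta$ in part (b) is essentially the paper's own computation: Green's identity on $\Omega_\rho'$, the radiation condition (\ref{Sommerfeld}) to remove the sphere term, harmonicity of $e^{ix\cdot\zeta}$ to handle the exponential, and the jump/trace relations of Lemmas \ref{singlelayer} and \ref{doublelayer}; your bookkeeping with $u=\psi-e^{ix\cdot\zeta}$ and the identity $D_\zeta\bigl(e^{iy\cdot\zeta}\bigr)=S_\zeta\bigl(\partial_\nu e^{iy\cdot\zeta}\bigr)$ on $\Omega'$ is only a cosmetic rearrangement. Where you genuinely depart from the paper is the verification of condition (iv). The paper stays entirely within layer-potential calculus: it re-derives $\psi=e^{ix\cdot\zeta}-S_\zeta(\partial\psi/\partial\nu_+)+D_\zeta f_\zeta$ from (i)--(iii), subtracts the defining formula to get $S_\zeta\left[\Lambda_qf_\zeta-\partial\psi/\partial\nu_+\right]=0$ on $\Omega'$, propagates this vanishing to all of $\mathbb{R}^n$ via the continuity of the single-layer trace and uniqueness of the interior Dirichlet problem, and finally recovers the density from the normal-derivative jump (\ref{normal_derivatives_single}). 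You instead bring in $w=P_qf_\zeta$, rewrite $\psi=e^{ix\cdot\zeta}-\mathbf{G}_\zeta(qw)$ on $\Omega'$ by Green's identity in $\Omega$, and identify $\tilde w=e^{ix\cdot\zeta}-\mathbf{G}_\zeta(qw)$ as a single $H^2_{\mathrm{loc}}$ function agreeing with $w$ inside (harmonicity of $\tilde w-w$ plus zero trace --- this is exactly where (\ref{boundaryInteq}) enters, as in the paper) and with $\psi$ outside, so the one-sided normal derivatives match automatically. Your route needs the solution operator $P_q$ of Proposition \ref{uniqueness_Schro} and the two-sided $H^2$ regularity of the volume potential; note Proposition \ref{conv_est} as stated only gives an $H^2(\Omega)$ bound, so your appeal to the decomposition $G_\zeta=G_0+H_\zeta$ of Lemma \ref{harmonicestimate} and Newtonian-potential regularity is needed and legitimate (the paper uses the same fact tacitly in Lemma \ref{Bitch}). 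What your version buys is that it avoids the jump relation for $\partial_\nu S_\zeta$ altogether and delivers, as a by-product, precisely the global extension mechanism that the paper proves separately in Lemma \ref{Bitch}(b); the paper's version, in exchange, is purely boundary-based and does not invoke the interior solution operator beyond what is needed to define $\Lambda_q$ on $H^{3/2}(\Bound)$.
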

		
		\begin{proof}
			a) Assume $\psi$ solves (\ref{exterior_prob}). We apply Green's identity to $G_{\zeta}$ and $\psi$ in $\Omega_{\rho}'$, $\rho>\rho_0$. It holds:
			
			\begin{align}
			\left(\int_{|y|=\rho}-\int_{\Bound}\right) \left[G_{\zeta}(x-y)\frac{\partial\psi}{\partial\nu_+} - \psi_+(y,\zeta)\frac{\partial G_{\zeta}}{\partial\nu_+(y)}(x-y)\right],ds(y) \\
			\nonumber = \int_{\Omega_{\rho}'} \left[G_{\zeta}(x-y)\Delta\psi(y,\zeta) - \psi(y,\zeta)\Delta_yG_{\zeta}(x-y)\right]\, dy 
			\end{align}
			
			Since, $\psi$ is harmonic on $\Omega_{\rho}'$ and $G_{\zeta}$ is the fundamental solution of $-\Delta$ we obtain for $x\in\Omega_{\rho}'$:
			
			\begin{align}
			\psi(x, \zeta) = &\int_{|y|=\rho} \left[G_{\zeta}(x-y)\frac{\partial\left(\psi-e^{iy\cdot\zeta}\right)}{\partial\nu} - \left(\psi - e^{iy\cdot \zeta}\right)\frac{\partial G_{\zeta}}{\partial\nu_+(y)}(x-y)\right]\,ds(y) 
			\\ \nonumber &+ \int_{|y|=\rho} \left[G_{\zeta}(x-y)\frac{\partial e^{iy\cdot\zeta}}{\partial\nu} - e^{iy\cdot\zeta}\frac{\partial G_{\zeta}}{\partial\nu(y)}(x-y)\right]\,ds(y)
			\\ \nonumber &-\int_{\Bound} \left[G_{\zeta}(x-y)\frac{\partial\psi}{\partial\nu_+}\,ds(y) - \int_{\Bound}\psi_+(y,\zeta)\frac{\partial G_{\zeta}}{\partial\nu(y)}(x-y)\right]\,ds(y)
			\end{align}
			
			By hypothesis (\ref{exterior_prob}-iii), the first integral vanishes. The function $e^{iy\cdot\zeta}$ is harmonic and a re-application of Green's identity to the second integral on $|y|<\rho$ equals $e^{ix\cdot\zeta}$. 
			Finally, due to (\ref{exterior_prob}-iv) the last integral is $\left[S_{\zeta}\Lambda_q-D_{\zeta}\right]\psi$.
			
			Then the function $\psi$ fulfills for $x\in\Omega'$ the identity: $$\psi(x,\zeta) = e^{ix\cdot\zeta} - \left[S_{\zeta}\Lambda_q - D_{\zeta}\right]f_{\zeta}.$$
			
			Taking the non-tangential limit to the boundary from the outside we obtain by Lemma \ref{singlelayer} and \ref{doublelayer}			
			\begin{align*}
			f_{\zeta}(x) = e^{ix\cdot\zeta} - \left[S_{\zeta}\Lambda_q - B_{\zeta} - \frac{1}{2}I\right]f_{\zeta}(x)
			\end{align*}
						
			b) Conversely, suppose $f_{\zeta}\in H^{3/2}(\Bound)$ solves the boundary integral equation (\ref{boundaryInteq}). Define a function $\psi$ in $\Omega'$ by
			
			\begin{align}\label{psi_1}
			\psi(x,\zeta) = e^{ix\cdot\zeta} - \left[S_{\zeta}\Lambda_q - D_{\zeta}\right]f_{\zeta}(x).
			\end{align}
			\\
			
			We show that this $\psi$ solves the exterior problem (\ref{exterior_prob}) from properties of the single and double layer (Lemma \ref{singlelayer} and \ref{doublelayer}). 
			
			It is immediate to see that $\psi$ fulfills the property i) of (\ref{exterior_prob}), since for $\zeta\cdot\zeta=0$ the exponential $e^{ix\cdot\zeta}$ is harmonic, and $S_{\zeta}\Lambda_qf_{\zeta}$, $D_{\zeta}f_{\zeta}$ are harmonic in $\Omega'$ by the above mentioned lemmas.	Moreover, it holds that $S_{\zeta}\Lambda_qf_{\zeta},\, D_{\zeta}f_{\zeta} \in H^2(\Omega_{\rho}'), \,\rho>\rho_0$ and further the identity (\ref{Sommerfeld}) also holds. Hence, the property ii) and iii) of the exterior problem follow.
			\\
			
			To show the last property, we approach the boundary $\Bound$ non-tangentially from the outside and we obtain, as in part a),
			$$\left.\psi\right|_{\Bound} = e^{ix\cdot\zeta} - \left[S_{\zeta}\Lambda_q - B_{\zeta} - \frac{1}{2}I\right]f_{\zeta}.$$
			
			By virtue of $f_{\zeta}$ fulfilling the boundary integral equation the right-hand side equals $f_{\zeta}$ and therefore $\left.\psi\right|_{\Bound} = f_{\zeta}$. From this and the first three properties of (\ref{exterior_prob}), that we already showed $\psi$ fulfills, we can obtain analogously to part a) 
			
			\begin{align}\label{psi_2}
			\psi(x,\zeta) = e^{ix\cdot\zeta} - S_{\zeta}\left(\frac{\partial\psi}{\partial\nu_+}\right) + D_{\zeta}f_{\zeta},\quad \text{ for } x\in\Omega'.
			\end{align}
			
			Subtracting both formulations of $\psi$, (\ref{psi_2}) and (\ref{psi_1}), the following equality holds throughout $\Omega'$
			
			\begin{align}\label{S_zeta}
			S_{\zeta}\left[\Lambda_qf_{\zeta} - \frac{\partial\psi}{\partial\nu_+}\right] = 0
			\end{align}
			
			By taking traces from the outside, it actually holds on the boundary $\Bound$. We are reminded that $S_{\zeta}\left[\Lambda_qf_{\zeta} - \frac{\partial\psi}{\partial\nu_+}\right]$ is harmonic in $\mathbb{R}^n\setminus\Bound$ and since the trace is $0$ on $\Bound$ uniqueness of the interior problem for $q\equiv 0$ implies that the equality (\ref{S_zeta}) holds everywhere. Then, its normal derivatives will be zero and subtracting them on $\Bound$ with the help of (\ref{normal_derivatives_single}) we obtain
			
			\begin{align}
			\left[\Lambda_q-\partial\psi/\partial\nu_+\right] = \frac{\partial S_{\zeta}\left[\Lambda_q-\partial\psi/\partial\nu_+\right]}{\partial\nu_-} - \frac{\partial S_{\zeta}\left[\Lambda_q-\partial\psi/\partial\nu_+\right]}{\partial\nu_+} = 0.
			\end{align}
			
			Thus the last property of the exterior problem follows.\\
		\end{proof}

		Furthermore, we are able to obtain a relation between the exterior problem and the solutions of integral equation (\ref{Schr_int_eq1}).
		
		\begin{lemma}\label{Bitch}
			Let $\zeta\in\mathcal{V}$. Then:
			\begin{enumerate}[label=(\alph*)]
				\item Suppose $\psi\in L^2_{\textnormal{loc}}(\mathbb{R}^n)$ is a solution of $\psi(x,\zeta) = e^{ix\cdot\zeta} - \int_{\mathbb{R}^n} G_{\zeta}(x-y)q(y)\psi(y,\zeta)$. 
				\\
				
				Then the restriction of $\psi$ to $\Omega'$ solves the exterior problem (\ref{exterior_prob}) and fulfills the respective properties i)-iv). \\
				
				\item Conversely, if $\psi$ solves the exterior problem (\ref{exterior_prob}), there is a unique solution $\tilde{\psi} \in L^2_{\textnormal{loc}}(\mathbb{R}^n)$ of the integral equation (\ref{Schr_int_eq1}), such that $\tilde{\psi} = \psi$ in $\Omega'$.
			\end{enumerate}
		\end{lemma}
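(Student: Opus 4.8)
The strategy is to treat the two directions separately, exploiting the fact that in both cases we have a function that is harmonic outside $\bar\Omega$, has the correct Faddeev-type asymptotics, and whose Cauchy data on $\Bound$ satisfy the DtN relation; the only real work is to patch the exterior object to an interior object so that the glued function solves the full-space integral equation, and then to invoke the uniqueness results already established.

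For part (a), suppose $\psi\in L^2_{\mathrm{loc}}(\mathbb R^n)$ solves $\psi = e^{ix\cdot\zeta} - \mathbf G_\zeta(q\psi)$ on all of $\mathbb R^n$. Since $q$ is supported in $\bar\Omega$, on $\Omega'$ the integral $\mathbf G_\zeta(q\psi)(x)=\int_\Omega G_\zeta(x-y)q(y)\psi(y)\,dy$ is a convolution of $G_\zeta$ (a fundamental solution of $-\Delta$) against a compactly supported $L^2$ source, hence harmonic in $\Omega'$; adding the harmonic $e^{ix\cdot\zeta}$ gives property (i). For the $H^2_{\mathrm{loc}}$ statement (ii), I would use Proposition~\ref{conv_est} (the $\mathbf G_\zeta : L^2(\Omega)\to H^2(\Omega)$ bound, and its analogue on bounded pieces of $\Omega'$) together with interior elliptic regularity for the harmonic remainder. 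Property (iii), the Sommerfeld-type radiation condition for $\psi - e^{ix\cdot\zeta} = -\mathbf G_\zeta(q\psi)$, is exactly the statement that the single-layer-type potential $G_\zeta\ast(q\psi)$ satisfies the radiation identity \eqref{Sommerfeld}; this is the content of Lemma~\ref{singlelayer}(iii) applied to the Newtonian-type potential with density $q\psi$ (or can be derived by writing $\mathbf G_\zeta(q\psi)$ as a volume potential and integrating by parts as in the proof of Lemma~\ref{Bitch2}). Finally, for (iv): restrict $\psi$ to $\bar\Omega$, note it solves $(-\Delta+q)\psi=0$ in $\Omega$ in the distributional sense because $-\Delta G_\zeta = \delta$ kills the potential term and returns $-q\psi$, and it lies in $H^2(\Omega)$ by Proposition~\ref{conv_est}; hence by Proposition~\ref{uniqueness_Schro} it is the unique $H^2$ solution with its own trace, so $\partial\psi/\partial\nu_+ = \partial(\psi|_\Omega)/\partial\nu = \Lambda_q(\psi|_{\Bound})$, where the continuity of $\psi$ and of its normal derivative across $\Bound$ comes from the fact that $\psi$ is a single global $H^2_{\mathrm{loc}}$ solution (no jump).

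For part (b), given $\psi$ solving the exterior problem \eqref{exterior_prob}, I would first pass to its boundary trace $f_\zeta = \psi|_{\Bound}\in H^{3/2}(\Bound)$, which by Lemma~\ref{Bitch2}(a) solves the boundary integral equation \eqref{boundaryInteq}. Then define the interior piece $w := P_q f_\zeta$, the unique $H^2(\Omega)$ solution of $(-\Delta+q)w=0$ with $w|_{\Bound}=f_\zeta$, which exists by Proposition~\ref{uniqueness_Schro}. Set $\tilde\psi := w$ in $\Omega$ and $\tilde\psi := \psi$ in $\Omega'$. The two pieces agree on $\Bound$ by construction, and their normal derivatives agree there because $\partial w/\partial\nu = \Lambda_q f_\zeta$ by definition of $\Lambda_q$, while $\partial\psi/\partial\nu_+ = \Lambda_q f_\zeta$ by property (iv) of the exterior problem; hence $\tilde\psi\in H^2_{\mathrm{loc}}(\mathbb R^n)\subset L^2_{\mathrm{loc}}(\mathbb R^n)$ and $(-\Delta+q)\tilde\psi=0$ in all of $\mathbb R^n$ (the potential term is supported in $\bar\Omega$ where it matches). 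It remains to check that $\tilde\psi$ satisfies the integral equation \eqref{Schr_int_eq1}, i.e. $\tilde\psi = e^{ix\cdot\zeta} - \mathbf G_\zeta(q\tilde\psi)$. For this I would apply Green's identity to $\tilde\psi$ and $G_\zeta$ on the ball $\{|y|<\rho\}$, $\rho>\rho_0$: the volume term produces $-\mathbf G_\zeta(q\tilde\psi)(x) + \tilde\psi(x)$ (using $-\Delta G_\zeta=\delta$ and $-\Delta\tilde\psi=-q\tilde\psi$), the boundary term on $|y|=\rho$ converges to $e^{ix\cdot\zeta}$ by the radiation condition (iii) exactly as in the proof of Lemma~\ref{Bitch2}(a), and there is no contribution from $\Bound$ since $\tilde\psi$ and its normal derivative are continuous there. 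This gives $\tilde\psi = e^{ix\cdot\zeta} - \mathbf G_\zeta(q\tilde\psi)$, as desired. Uniqueness of $\tilde\psi$ among $L^2_{\mathrm{loc}}$ solutions of \eqref{Schr_int_eq1} restricting to $\psi$ on $\Omega'$ follows because any such solution has, by part (a), Cauchy data on $\Bound$ determined by $\psi|_{\Bound}$ via $\Lambda_q$, and then the interior piece is forced to be $P_q(\psi|_{\Bound})$ by the uniqueness in Proposition~\ref{uniqueness_Schro}; alternatively, for $|\zeta|$ large one can cite Corollary~\ref{uniqueness} directly, but the argument via $P_q$ works for all non-exceptional $\zeta$.

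The main obstacle I anticipate is not any single deep estimate but the bookkeeping around regularity and the transmission (matching) conditions: one must be careful that $\tilde\psi$ built by gluing is genuinely in $H^2_{\mathrm{loc}}$ across $\Bound$ — this requires matching of both the trace and the full normal derivative, and it is precisely here that property (iv) of the exterior problem and the definition of $\Lambda_q$ via $P_q$ (Proposition~\ref{uniqueness_Schro}) are essential — and that the volume potential $\mathbf G_\zeta(q\tilde\psi)$ is controlled in the right weighted or local $H^2$ spaces so that the Green's identity manipulations and the passage $\rho\to\infty$ are legitimate. A secondary subtlety is that for $\zeta$ an exceptional point the integral equation \eqref{Schr_int_eq1} need not be uniquely solvable in the weighted space $L^2_{\delta-1}$, so the uniqueness claim in (b) must be read as uniqueness within $L^2_{\mathrm{loc}}$ subject to the constraint $\tilde\psi|_{\Omega'}=\psi$, which is genuinely weaker and is what the proof actually delivers.
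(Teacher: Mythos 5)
Your proposal is correct and follows essentially the same route as the paper: property (i)--(iii) via the support of $q$, Proposition~\ref{conv_est} and Green's identity on $\{|y|<\rho\}$ with the radiation condition, property (iv) from two-sided $H^2$ regularity across $\Bound$, and for (b) the gluing $\tilde\psi=P_q\psi_+$ in $\Omega$, $\tilde\psi=\psi$ in $\Omega'$, with uniqueness reduced to that of the interior Dirichlet problem (Proposition~\ref{uniqueness_Schro}). The only cosmetic difference is your optional appeal to Lemma~\ref{singlelayer}(iii) (stated for boundary layer potentials, not volume potentials) for the radiation condition, but your alternative Green's-identity derivation is exactly what the paper does, so nothing is missing.
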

		
		\begin{proof}
			
			a) From the proposition \ref{conv_est} it follows $\psi\in H^2_{\textnormal{loc}}(\mathbb{R}^n)$, which immediately implies property ii) of the exterior problem. Moreover, in $\mathbb{R}^n$ it holds  $(-\Delta+q)\psi=0$, thus due to $q\equiv 0$ on $\Omega'$ the property i) holds, i.e.,  $-\Delta\psi=0$ in $\Omega'$.
			
			Applying Green identity on $|y|<\rho$:
			
			\begin{align*}
			\int_{|y|=\rho} &\left[G_{\zeta}(x-y)\frac{\partial\psi}{\partial\nu(y)} - \psi(y,\zeta)\frac{\partial G_{\zeta}}{\partial\nu(y)}(x-y)\right]\,ds(y) \\
			& = \int_{|y|<\rho} G_{\zeta}(x-y)q(y)\psi(y,\zeta)\, dy + \psi(x,\zeta), \text{ for a.e. } x \text{ with } |x|<\rho.
			\end{align*} 
			
			Now, we can choose $\rho$ large in order to contain the supp of $q$. Since $\psi$ solves integral equation this means that the right-hand side equals $e^{ix\cdot\zeta}$. Moreover, we already showed that 
			$$e^{ix\cdot\zeta} = \int_{|y|=\rho} \left[G_{\zeta}(x-y)\frac{\partial e^{iy\cdot\zeta}}{\partial\nu(y)} - e^{iy\cdot\zeta}\frac{\partial G_{\zeta}}{\partial\nu(y)}(x-y)\right]\,ds(y).$$
			
			Then passing the exponential to the right-hand side, we obtain:
			
			\begin{align*}
			\int_{|y|=\rho} \left[G_{\zeta}(x-y)\frac{\partial\left(\psi-e^{iy\cdot\zeta}\right)}{\partial\nu(y)} - \left(\psi(y,\zeta)-e^{iy\cdot\zeta}\right)\frac{\partial G_{\zeta}}{\partial\nu(y)}(x-y)\right]\,ds(y) = 0, \text{ for all} \rho>\rho_0,
			\end{align*}
			
			thus property iii) follows by taking the limit as $\rho\rightarrow \infty$.
			\\
			
			Immediately, we can see that $\Lambda_q\psi_- = \frac{\partial\psi}{\partial\nu_-}$ and since $\psi\in H^2$ in a two-sided neighborhood of $\Bound$ it holds that $\psi_-=\psi_+$ and $\frac{\partial\psi}{\partial\nu_{-} }=\frac{\partial\psi}{\partial\nu_{+}}$. This leads to $\psi$ fulfilling the iv) property. \\
			
			Therefore, the restriction of $\psi$ to $\Omega'$ solves the exterior problem (\ref{exterior_prob}).\\
			
			b) Suppose $\psi$ defined in $\Omega'$ solves the exterior problem (\ref{exterior_prob}). Set $\tilde{\psi}$ by $\tilde{\psi}=P_q\psi_+$ in $\Omega$ and $\tilde{\psi}=\psi$ in $\Omega'$.
			
			Then on $\Bound$, $\tilde{\psi}_- = (P_q\psi_+)=\psi_+=\tilde{\psi}_+$ and $\frac{\partial\tilde{\psi}}{\partial\nu_-} = \Lambda_q\psi_+ = \frac{\partial\psi}{\partial\nu_+} = \frac{\partial\tilde{\psi}}{\partial\nu_+}$ by iv).			
			Thus $\tilde{\psi}$ solves $(-\Delta+q)\tilde{\psi}=0$ on $\mathbb{R}^n$. Applying Green's formula in $|y|<\rho$
			\begin{align*}
			\int_{|y|=\rho} &\left[G_{\zeta}(x-y)\frac{\partial\psi}{\partial\nu(y)} - \psi(y,\zeta)\frac{\partial G_{\zeta}}{\partial\nu(y)}(x-y)\right]\,ds(y) \\
			& = \int_{|y|<\rho} G_{\zeta}(x-y)q(y)\tilde{\psi}(y,\zeta)\, dy + \tilde{\psi}(x,\zeta), \text{ for a.e. } x \text{ with } |x|<\rho,
			\end{align*}
			
			by letting $\rho\rightarrow \infty$ the radiation condition iii) implies that the left-hand side is $e^{ix\cdot\zeta}$. 
			Thus $\tilde{\psi}$ verifies the desired integral equation in $\mathbb{R}^n$.
			
			To finalize we prove that this extension is unique. 		
			Suppose that we have two extensions $\tilde{\psi}^1, \tilde{\psi}^2 \in L^2_{\text{loc}}(\mathbb{R}^n)$ of $\psi$ which agree in $\Omega'$ and solve the integral equation everywhere. As in part a), we see that $ \tilde{\psi}^1, \tilde{\psi}^2 \in H^2_{\text{loc}}(\mathbb{R}^n)$ and $(-\Delta+q)\tilde{\psi}^j=0$ in $\mathbb{R}^n$ for $j=1,\,2$. Hence, they are in $H^2$ on a two-sided neighborhood of $\Bound$. This implies that $\tilde{\psi}^j_+=\tilde{\psi}^j_-$, for $j=1,2$, which promptly leads too $\tilde{\psi}^1_-=\tilde{\psi}^2_-$ since they agree on $\Omega'$. Now, from the uniqueness of the interior problem it follows that $\tilde{\psi}^1=\tilde{\psi}^2$.		
			
		\end{proof}
		
			\textbf{Remark:}
			The two previous lemmas establish that a solution of the boundary integral equation is equivalent to a exponential growing solution of the Schrödinger equation in $\mathbb{R}^n$. The interesting remark is that there was no explicit requirement of $\zeta$ being large. Hence, by showing that the boundary integral equation is uniquely solvable for small values of $\zeta$ we guarantee the existence of exponential growing solutions for these $\zeta$.\\
			
			Keeping this in mind, we focus now on solvability of the boundary integral equation:			
		
		\begin{proposition}\label{InvertibilityBIE}
			Let $\Omega$ be a bounded $C^{1,1}$-domain in $\mathbb{R}^n,\,n\geq 3$. Let $q$ be a complex-valued potential in $L^{\infty}(\Omega)$ and suppose that $0$ is not Dirichlet eigenvalue of $-\Delta+q$ in $\Omega$.
			
			 We define $K_{\zeta}=S_{\zeta}\Lambda_q-B_{\zeta}-\frac{1}{2}I$ and for any $\zeta\in\mathcal{V}$ it holds:
			
			\begin{enumerate}[label=(\alph*)]
				\item The operators $K_0,\,K_{\zeta}$ are compact on $H^{3/2}(\Bound)$. 
				
				\item If $\textnormal{Re }q\geq0$, then $I+K_{0}$ is invertible in $H^{3/2}(\Bound)$.
				
				\item If $\textnormal{Re }q\geq 0$ there exists an $\epsilon>0$ with $|\zeta|<\epsilon$ for which the operator $I+K_{\zeta}$ is invertible in $H^{3/2}(\Omega)$.
				
				\item There exists an $R>0$ such that for all $|\zeta|>R$ the operator $I+K_{\zeta}$ is invertible in $H^{3/2}(\Bound)$.\\
			\end{enumerate}
		\end{proposition}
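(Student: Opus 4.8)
Everything rests on the algebraic identity
\[
K_\zeta=S_\zeta\Lambda_q-B_\zeta-\tfrac12 I=S_\zeta(\Lambda_q-\Lambda_0),
\]
where $\Lambda_0$ is the Dirichlet-to-Neumann map of $-\Delta$ (well defined, since $0$ is not a Dirichlet eigenvalue of the Laplacian). To see it, apply the Green representation formula with $G_\zeta$ to a function harmonic in $\Omega$ with trace $f$: one gets $u=S_\zeta\Lambda_0 f-D_\zeta f$ in $\Omega$, and taking the interior non-tangential trace via the jump relation of \lemref{doublelayer} yields $S_\zeta\Lambda_0=B_\zeta+\tfrac12 I$ on $H^{3/2}(\Bound)$; subtracting $S_\zeta\Lambda_0$ from $S_\zeta\Lambda_q$ gives the identity, and likewise $K_0=S_0(\Lambda_q-\Lambda_0)$. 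The dividend is that $\Lambda_q-\Lambda_0$ is smoothing: if $u,u_0$ have common trace $f\in H^{1/2}(\Bound)$ and solve $(-\Delta+q)u=0$, $\Delta u_0=0$, then $v:=u-u_0$ solves $(-\Delta+q)v=-qu_0$, $v|_{\Bound}=0$, so by the elliptic theory underlying \propref{uniqueness_Schro} (this is where $q\in L^\infty$, the non-eigenvalue hypothesis, and the $C^{1,1}$ boundary are used) $v\in H^2(\Omega)$ and $(\Lambda_q-\Lambda_0)f=\partial_\nu v$; hence $\Lambda_q-\Lambda_0\colon H^{1/2}(\Bound)\to H^{1/2}(\Bound)$ is bounded.

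\textbf{(a).} Factor $K_\zeta$ on $H^{3/2}(\Bound)$ as $S_\zeta\circ(\Lambda_q-\Lambda_0)\circ\iota$, with $\iota\colon H^{3/2}(\Bound)\hookrightarrow H^{1/2}(\Bound)$ the compact Rellich embedding, $\Lambda_q-\Lambda_0$ bounded on $H^{1/2}(\Bound)$, and $S_\zeta\colon H^{1/2}(\Bound)\to H^{3/2}(\Bound)$ bounded by (\ref{single_layer}); the same works for $\zeta=0$. Thus $K_0,K_\zeta$ are compact on $H^{3/2}(\Bound)$, so $I+K_0$ and $I+K_\zeta$ are Fredholm of index zero and, for (b)–(d), it suffices to establish injectivity (for (c), invertibility of $I+K_0$ plus a perturbation).

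\textbf{(b) and (d), injectivity.} Suppose $(I+K_\zeta)f=0$. Running the proofs of \lemref{Bitch2}(b) and then \lemref{Bitch}(b) with the source $e^{ix\cdot\zeta}$ replaced by $0$ (these arguments are linear and go through verbatim) produces $\tilde\psi\in H^2_{\mathrm{loc}}(\R^n)$ with $(-\Delta+q)\tilde\psi=0$ in $\R^n$, $\tilde\psi|_{\Bound}=f$, $\tilde\psi=-(S_\zeta\Lambda_q-D_\zeta)f$ on $\Omega':=\R^n\setminus\bar\Omega$, and $\tilde\psi(x)=-\int_\Omega G_\zeta(x-y)q(y)\tilde\psi(y)\,dy$. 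For (b), $\zeta=0$: since $G_0(x)=O(|x|^{2-n})$ we get $\tilde\psi(x)=O(|x|^{2-n})$ and $\nabla\tilde\psi(x)=O(|x|^{1-n})$; integrating $(-\Delta+q)\tilde\psi\cdot\overline{\tilde\psi}$ over $\Omega$ and $-\Delta\tilde\psi\cdot\overline{\tilde\psi}$ over $\Omega'$, and adding, the boundary integrals on $\Bound$ cancel (no jump of $\tilde\psi$ or $\partial_\nu\tilde\psi$) and the flux on $|x|=\rho$ vanishes as $\rho\to\infty$ (here $n\ge3$ is essential), giving $\int_{\R^n}|\nabla\tilde\psi|^2+\int_\Omega q|\tilde\psi|^2=0$; the real part together with $\mathrm{Re}\,q\ge0$ forces $\nabla\tilde\psi\equiv0$, so $\tilde\psi$ is constant, so $\tilde\psi\equiv0$ by the decay, so $f=0$. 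For (d), $|\zeta|$ large: put $\mu=e^{-ix\cdot\zeta}\tilde\psi$; the integral equation becomes $\mu=-g_\zeta\ast(q\mu)=-A_\zeta\mu$, and since $q\mu$ is supported in $\Omega$ it lies in $L^2_\delta(\R^n)$, so $\mu\in L^2_{\delta-1}(\R^n)$ by \propref{conv_est} and $(I+A_\zeta)\mu=0$; for $|\zeta|>R$ with $R$ as in \corref{uniqueness} (where $I+A_\zeta$ is invertible on $L^2_{\delta-1}$), this forces $\mu\equiv0$, hence $f=0$. In both cases injectivity plus the Fredholm alternative yields invertibility of $I+K_\zeta$.

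\textbf{(c).} By (b), $I+K_0$ is invertible on $H^{3/2}(\Bound)$. By \lemref{harmonicestimate}, $K_\zeta-K_0=(S_\zeta-S_0)(\Lambda_q-\Lambda_0)=\mathcal H_\zeta(\Lambda_q-\Lambda_0)$ with $\|\mathcal H_\zeta\|_{\mathcal L(H^{1/2}(\Bound),H^{3/2}(\Bound))}\le C|\zeta|^{\,n-2}$, and since $\Lambda_q-\Lambda_0\colon H^{3/2}(\Bound)\to H^{1/2}(\Bound)$ is bounded we obtain $\|K_\zeta-K_0\|_{\mathcal L(H^{3/2}(\Bound))}\le C'|\zeta|^{\,n-2}\to0$ as $\zeta\to0$ (again $n\ge3$). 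A Neumann series then gives $\epsilon>0$ so that $I+K_\zeta=(I+K_0)\bigl(I+(I+K_0)^{-1}(K_\zeta-K_0)\bigr)$ is invertible on $H^{3/2}(\Bound)$ for $|\zeta|<\epsilon$. The main obstacle is the first paragraph: pinning down $K_\zeta=S_\zeta(\Lambda_q-\Lambda_0)$ with the right trace conventions, the mapping property of $\Lambda_q-\Lambda_0$, and the fact that \lemref{Bitch2} and \lemref{Bitch} survive the replacement of the right-hand side by $0$; after that, (a)–(d) are respectively compactness plus the Fredholm alternative, an energy identity, and two perturbation arguments (small and large $|\zeta|$).
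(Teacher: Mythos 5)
Your proposal is correct and follows essentially the same route as the paper: compactness plus the Fredholm alternative in (a), an energy identity exploiting $\textnormal{Re}\,q\geq 0$ for $\zeta=0$, a Neumann-series perturbation of $I+K_0$ via \lemref{harmonicestimate} for small $|\zeta|$, and the contraction property of $A_\zeta$ from \corref{uniqueness} for large $|\zeta|$. The only notable (but harmless) variations are in (a), where you locate the compact step in the embedding $H^{3/2}(\Bound)\hookrightarrow H^{1/2}(\Bound)$ composed with the smoothing factorization $K_\zeta=S_\zeta(\Lambda_q-\Lambda_0)$, whereas the paper factors $K_\zeta f=\textnormal{tr}\,\bigl(\textbf{G}_\zeta(qP_qf)\bigr)$ through the compact embedding $H^2(\Omega)\hookrightarrow L^2(\Omega)$, and in (b), where you eliminate the residual constant by the $O(|x|^{2-n})$ decay of the potential while the paper invokes Hardy's inequality.
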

		
		\begin{proof}
			Part a) follows by a compactness embedding. 
						
			Let $f\in H^{3/2}(\Bound)$ and set $w=P_qf$ as the solution of interior Dirichlet problem (\ref{interiorDS}).
			Let $x\in\Omega$ we use the Green's formula to obtain:
			
			\begin{align*}
			\int_{\Omega} G_{\zeta}(x-y)\Delta w(y)\,dy + w(x) = \left[S_{\zeta}\Lambda_q-D_{\zeta}\right]f(x) \\
			\Leftrightarrow \int_{\Omega} G_{\zeta}(x-y)q(y)P_qf(y)\, dy + w(x) = \left[S_{\zeta}\Lambda_q - D_{\zeta}\right]f(x)
			\end{align*}
			
			By letting $x$ approach the boundary non-tangentially from the inside we obtain:
			
			\begin{align*}
			&\text{tr}\, \left(G_{\zeta}\ast(qP_qf)\right) +f(x) = S_{\zeta}\Lambda_qf(x) - \left[-\frac{1}{2}f(x) + B_{\zeta}f(x)\right]\\
			&\Rightarrow \left[S_{\zeta}\Lambda_q-B_{\zeta}-\frac{1}{2}I\right]f = \text{tr}\, \left(G_{\zeta}\ast(qP_qf)\right)
			\end{align*}
			
			Hence, our desired operator follows the above factorization.
			The following mapping properties hold:
			\begin{itemize}
				\item $P_q: H^{3/2}(\Bound)\rightarrow H^2(\Omega)$;
				\item $\imath: H^2(\Omega)\rightarrow L^2(\Omega)$ is a compact embedding;
				\item $M_q: L^2(\Omega)\rightarrow L^2(\Omega)$;
				\item $\textbf{G}_{\zeta}: L^2(\Omega) \rightarrow H^2(\Omega)$ convolution with $G_{\zeta}$, which we prove up next.
				\item $\text{tr}: H^2(\Omega)\rightarrow H^{3/2}(\Bound)$
			\end{itemize}
			
			Hence, compactness of the embedding implies compactness of the desired operator.\\
			
			b) Let $\zeta=0$. In this case $G_0$ is the classical fundamental solution and the corresponding operators are the classical ones.\\
			 
			By part a), we already know that $S_{0}\Lambda_q-B_{0}-\frac{1}{2}I$ is compact on $H^{3/2}(\Bound)$. Then $I+K_0 = \left[\frac{1}{2}I+S_{0}\Lambda_q-B_{0}\right]$ is Fredholm of index zero on $H^{3/2}(\Bound)$. Therefore, it is enough to show injectivity.
			
			Let $h\in H^{3/2}(\Bound)$ such that $\left[\frac{1}{2}I+S_{0}\Lambda_q-B_{0}\right]h=0$.			Define $w=-S_{0}\Lambda_qh+D_{0}h$. Then $w$ is harmonic in $\mathbb{R}^n$, $w\in H^2(\Omega)$ and $w\in H^2(\Omega_{\rho}')$ by Lemma \ref{singlelayer} and \ref{doublelayer}. Moreover, approaching the boundary non-tangentially by the inside we obtain
			$$w_- = -S_{0}\Lambda_qh + \left(-\frac{1}{2}h +B_{0}h\right) = -\left[\frac{1}{2}I+S_{0}\Lambda_q-B_{0}\right]h=0.$$
			
			Since, the problem $-\Delta w=0,\; \left.w\right|_{\Bound}=0$ is uniquely solvable in $H^2(\Omega)$ it follows that $w\equiv 0$ in $\Omega$ and thus $\frac{\partial w}{\partial\nu_{-}}=0$ on $\Bound$.
			
			By noticing the jump relations for the single and double layer operator (check \cite{McLean}), we can deduce that:
			\begin{align*}
				[w]=w_+-w_-=w_+ = [D_{0}h]= h,\text{ and }
				\left[\frac{\partial w}{\partial \nu}\right]=\frac{\partial w}{\partial\nu_+}=-\left[\frac{\partial}{\partial \nu}S_{0}\Lambda_qh\right]= \Lambda_qh
			\end{align*}
			
			Now, by proposition \ref{uniqueness_Schro} there is a unique solution $u\in H^2(\Omega)$ of
			\begin{align*}
				\begin{cases}
					\left(-\Delta+q\right)u=0 \\
					\left. u\right|_{\Bound} = h
				\end{cases}	
			\end{align*}
			
			such that $\Lambda_qh=\left.\frac{\partial u}{\partial\nu_-}\right|_{\Bound}$.
			\\
			
			We set
			\begin{align*}
				v=\begin{cases}
				u, \text{ in } \;\Omega\\
				w, \text{ in } \;\mathbb{R}^n\setminus\Omega
				\end{cases}
			\end{align*}
			
			 and see that $u_-=w_+=h$ and $\frac{\partial u}{\partial\nu_-}=\frac{\partial w}{\partial\nu_+}=\Lambda_q$, thus it holds that $v$ and $\frac{\partial v}{\partial\nu}$ are continuous over the boundary $\Bound$. Therefore $v\in H^2(B_{\rho}(0)),\, \rho>0$ and it solves $-\Delta v+qv=0$ in $\mathbb{R}^n$, since $q\equiv 0,$ in $\mathbb{R}^n\setminus\Omega$.
			\\
			
			Let $\chi_{\rho}\in C^{\infty}_c(\mathbb{R}^n)$ such that $\chi\equiv 1$ in $B_{\rho-\epsilon}(0)$ and $\chi\equiv 0$ in $\rho-\epsilon < |x| <\rho$, for $\epsilon>0$ small enough.
			
			Then for $\phi\in H^1(\mathbb{R}^n)$ it follows by Green's identity
			
			\begin{align*}
				&\int_{|x|<\rho} \left(-\Delta v+qv\right)\left(\chi\phi\right)\,dx = 0 \Leftrightarrow \int_{|x|<\rho} \nabla v\cdot\nabla(\chi\phi)+ qv(\chi\phi)\, dx = 0\\
				& \int_{\Omega} \nabla v\cdot \nabla \phi+qv\phi\,dx + \int_{B_{\rho}(0)\setminus\Omega} \nabla w\cdot\nabla(\chi\phi)\, dx =0
			\end{align*}
			
			In particular we can take $\phi=\bar{v}$ and since $w$ is given through the classical single and double layer it follows that $\nabla w \in L^2(B_{\rho}(0)\setminus\bar{\Omega})$. Thus taking the limit as $\rho\rightarrow \infty$:
			\begin{align*}
				&\int_{\Omega} |\nabla v|^2 \phi+q|v|^2\,dx + \int_{B_{\rho}(0)\setminus\Omega} \nabla w\cdot\nabla(\chi\bar{w})\, dx =0\\
				&\int_{\Omega} |\nabla v|^2 \phi+q|v|^2\,dx + \int_{B_{\rho}(0)\setminus\Omega} |\nabla w|^2\, dx = \int_{B_{\rho}(0)\setminus\Omega} \nabla w\cdot\nabla((1-\chi)\bar{w})\,dx 
				\\& \int_{\mathbb{R}^n} |\nabla v|^2+q|v|^2\,dx = 0 \Rightarrow 		\int_{\mathbb{R}^n} |\nabla v|^2+(\text{Re}\,q)|v|^2\,dx = 0	
			\end{align*}
			
			Now, we can apply Hardy's inequality for $H^1(\mathbb{R}^n)$:
			
			\begin{align*}
				\frac{(d-2)^2}{4}\int_{\mathbb{R}^n} |x|^{-2}|v|^2\,dx \leq \int_{\mathbb{R}^n} |\nabla v|^2\,dx
			\end{align*}
			
			To finally obtain the condition:
			
			\begin{align*}
				\int_{\mathbb{R}^n} \left[\frac{(d-2)^2}{4|x|^2}+\left(\text{Re }q(x)\right)\right]|v|^2\,dx \leq 0
			\end{align*}
			
			Hence, for $\text{Re q}\geq 0$ this implies that $v\equiv 0$ in $\mathbb{R}^n$. Thus $h\equiv 0$ in $\Bound$. Thus we obtain invertibility in the case $\zeta=0$.	Notice, that we have been loose on the requirement for $q$, since this will be enough for the complex-conductivity purposes, but this proof works for a larger class of potentials.
			\\
			
			Part c) follows quite easily by the fact that the set of invertible operators is open. 
			However, we present the result with the help of some estimates and Neumann series. \\

			For $h\in H^{3/2}(\Bound)$ it holds $K_{\zeta}h = S_{\zeta}(\Lambda_q-\Lambda_0)h$, due to Green's formula we have $B_{\zeta}=-\frac{1}{2}I + S_{\zeta}\Lambda_0$.

			Moreover, by Lemma \ref{harmonicestimate} and for $h\in H^{3/2}(\Bound)$ we have the decomposition $S_{\zeta}(\Lambda_q-\Lambda_0)h = S_0(\Lambda_q-\Lambda_0)h + \mathcal{H}_{\zeta}(\Lambda_q-\Lambda_0)h$. Moreover, we also have by the Lemma the estimate:
			\begin{align*}
			\|\mathcal{H}_{\zeta}(\Lambda_q-\Lambda_0)h\|_{H^{3/2}(\Bound)}\leq C|\zeta|^{n-2}\|(\Lambda_q-\Lambda_0)h\|_{H^{1/2}(\Bound)} \leq C|\zeta|^{n-2}\|h\|_{H^{3/2}(\Bound)}.
			\end{align*}
			
			From the invertibility of $I+K_0$ we obtain the decomposition					
			\begin{align*}
				[I+K_{\zeta}] = I+K_{0}+\mathcal{H}_{\zeta}\left(\Lambda_q-\Lambda_0\right) = \left(I+K_0\right)\left(I+\left(I+K_0\right)^{-1}\mathcal{H}_{\zeta}\left(\Lambda_q-\Lambda_0\right)\right)
			\end{align*}
			
			and if $$\|\left(I+K_0\right)^{-1}\mathcal{H}_{\zeta}\left(\Lambda_q-\Lambda_0\right)\|_{\mathcal{L}(H^{3/2}(\Bound))}<1$$ we obtain invertibility for $I+K_{\zeta}$ in $H^{3/2}(\Bound)$.
			
			This norm can be translated to an estimate for $\zeta$ by the above on $\mathcal{H}_{\zeta}$. We have
			
			\begin{align*}
				&\|\left(I+K_0\right)^{-1}\mathcal{H}_{\zeta}\left(\Lambda_q-\Lambda_0\right)\|_{\mathcal{L}(H^{3/2}(\Bound))} \\&\leq C|\zeta|^{n-2}\left\|\left(I+K_0\right)^{-1}\right\|_{\mathcal{L}(H^{3/2}(\Bound))}\left\|\mathcal{H}_{\zeta}\left(\Lambda_q-\Lambda_0\right)\right\|_{\mathcal{L}(H^{3/2}(\Bound))} < 1.
			\end{align*}	
			
			 Hence, for $$|\zeta|< \left[\frac{1}{\left\|\left(I+K_0\right)^{-1}\right\|_{\mathcal{L}(H^{3/2}(\Bound))}\left\|\mathcal{H}_{\zeta}\left(\Lambda_q-\Lambda_0\right)\right\|_{\mathcal{L}(H^{3/2}(\Bound))}}\right]^{1/(n-2)}=:\epsilon,$$ invertibility is obtained by Neumann series.\\

			Part iv) uses the existence of exponential growing solutions for large values of $|\zeta|$.\\
			
			Let $R>0$ be large enough such that for $\zeta\in\mathbb{C}^n$ with $\zeta\cdot\zeta=0, \, |\zeta|>R$ we have unique exponential growing solutions of (\ref{Schr_int_eq1}), corollary \ref{uniqueness} . Under this conditions, we have showed that $K_{\zeta}:=S_{\zeta}\Lambda_q-B_{\zeta}-\frac{1}{2}I$ is compact in $H^{3/2}(\Bound)$. Therefore, $I+K_{\zeta}$ is a Fredholm operator of index zero in $H^{3/2}(\Bound)$. We need to show that the kernel is empty to prove that it is invertible.
			
			Let $g\in H^{3/2}(\Bound)$ be in $\text{ker}\, K$. Then $h = [-S_{\zeta}\Lambda_q + D_{\zeta}]g$ solves the exterior problem i), ii), iv) and fulfills the radiation condition (\ref{Sommerfeld}) (the proof is analogous to Lemma \ref{BIeq}).\\
			
			Moreover, we can extend $h$ to a solution $\tilde{h}$ of $\tilde{h} = -\int_{\mathbb{R}^n} G_{\zeta}(x-y)q(y)\tilde{h}(y)\, dy$ in all of $\mathbb{R}^n$ (analogous to the previous lemma). By the estimates on $G_{\zeta}$ we note that $e^{-ix\cdot\zeta}\tilde{h}\in L^2_{\delta-1}(\mathbb{R}^n), \, 0<\delta<1$ and $$e^{-ix\cdot\zeta}\tilde{h} = -A_{\zeta}(e^{-ix\cdot\zeta}\tilde{h})$$ with $A_{\zeta}$ defined as in (\ref{K_zeta}). Since, we took $R>0$ large enough then $A_\zeta$ is a contraction in $L^2_{\delta-1}(\mathbb{R}^n)$ and this forces $\tilde{h}\equiv 0$. Therefore, 
			
			\begin{align*}
			g\equiv 0\text{ and } I+K_{\zeta} \text{ is invertible in } H^{3/2}(\Bound).
			\end{align*} 
		\end{proof}
			
			Therefore, we can solve the boundary integral equation for small and large values of $|\zeta|$ and obtain $\psi$ on $\Bound$ by:
			
			\begin{align*}
			\psi(x,\zeta) = \left[\frac{1}{2}I + S_{\zeta}\Lambda_q - B_{\zeta}\right]^{-1}\left(e^{ix\cdot\zeta}\right)			\end{align*}
			 	
		This allows us to obtain the scattering transform from the boundary data:
		
		\begin{theorem}
			Suppose that $\Omega$ is a bounded $C^{1,1}$-domain in $\mathbb{R}^n,\,n\geq 3$. Let $q\in L^{\infty}(\Omega)$ be complex-valued and suppose that $0$ is not a Dirichlet eigenvalue of $-\Delta+q$ in $\Omega$.
			
			We define the scattering transform for non-exceptional points $\zeta\in\mathcal{V}$ by			
			\begin{align}\label{scat}
			\textnormal{\textbf{t}}(\xi, \zeta) = \int_{\mathbb{R}^3} e^{-ix\cdot(\zeta+\xi)}q(x)\psi(x,\zeta)\,dx,\; \xi\in\mathbb{R}^n.
			\end{align}
			
			Then, for each $\xi\in\mathbb{R}^n$ we can compute the scattering transform for the non-exceptional points $\zeta\in\mathcal{V}_{\xi} :=\left\{\zeta\in\mathbb{C}^n\setminus\{0\}: \zeta\cdot\zeta=0,\, |\xi|^2+2\zeta\cdot\xi=0\right\}$ from the solutions of the boundary integral equation by:
			
			\begin{align}\label{boundaryscat}
			\textnormal{\textbf{t}}(\xi,\zeta) = \int_{\Bound} e^{-ix\cdot(\zeta+\xi)}\left[\Lambda_q + i(\xi+\zeta)\cdot\nu\right]\psi(x,\zeta)\,ds(x), \; \xi\in\mathbb{R}^n.
			\end{align}

		\end{theorem}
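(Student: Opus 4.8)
The plan is to show that the two expressions \eqref{scat} and \eqref{boundaryscat} for $\textnormal{\textbf{t}}(\xi,\zeta)$ agree by transforming the volume integral over $\mathbb{R}^n$ into a boundary integral via Green's identity, exactly as in the uniqueness theorem of Section~2. First I would fix $\xi\in\mathbb{R}^n$ and a non-exceptional $\zeta\in\mathcal{V}_\xi$, so that $\zeta\cdot\zeta=0$ and $|\xi|^2+2\zeta\cdot\xi=0$; the latter condition is precisely what makes $e^{i x\cdot(\xi+\zeta)}$ a generalized solution adapted to the operator $(-\Delta-2i\zeta\cdot\nabla)$, and in fact makes $e^{ix\cdot(\xi+\zeta)}$ harmonic when combined with $\zeta\cdot\zeta=0$ since $|\xi+\zeta|^2 = |\xi|^2 + 2\xi\cdot\zeta + \zeta\cdot\zeta = 0$. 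Let $\psi(x,\zeta)$ be the unique exponential growing solution of $(-\Delta+q)\psi=0$ associated to $\zeta$, which exists by Corollary~\ref{uniqueness} (for large $|\zeta|$) or by Proposition~\ref{InvertibilityBIE} together with Lemma~\ref{Bitch} (for small $|\zeta|$), and whose boundary trace is recovered from the boundary integral equation via $\psi|_{\Bound} = \left[\tfrac12 I + S_\zeta\Lambda_q - B_\zeta\right]^{-1}e^{ix\cdot\zeta}$.

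The core computation is Green's identity for the pair $\psi(\cdot,\zeta)$ and $E(x):=e^{ix\cdot(\xi+\zeta)}$ on a large ball $B_\rho$ containing $\bar\Omega$ and $\operatorname{supp}q$. Since $q$ vanishes outside $\Omega$, we have $\Delta\psi = q\psi$ on $B_\rho$ (with the distributional reading near $\Bound$ justified by $\psi\in H^2$ on a two-sided neighborhood of $\Bound$), while $\Delta E = -|\xi+\zeta|^2 E = 0$. Hence
\begin{align*}
\int_{B_\rho} E(x)\,q(x)\psi(x,\zeta)\,dx
&= \int_{B_\rho}\left[E\,\Delta\psi - \psi\,\Delta E\right]dx
= \int_{|x|=\rho}\left[E\,\frac{\partial\psi}{\partial\nu} - \psi\,\frac{\partial E}{\partial\nu}\right]ds(x).
\end{align*}
Taking $\rho\to\infty$, the right-hand boundary term must vanish: this is where I would invoke that $\psi(x,\zeta)-e^{ix\cdot\zeta}$ satisfies the Sommerfeld-type radiation condition \eqref{Sommerfeld} from the exterior problem \eqref{exterior_prob}(iii), while the contribution of the $e^{ix\cdot\zeta}$ part against the harmonic function $E$ over large spheres also vanishes because $e^{ix\cdot\zeta}E = e^{ix\cdot\xi}$ times an exponential and one can again apply Green's identity on $|y|<\rho$ to see the large-sphere integral is $\rho$-independent and equal to an expression that decays — more carefully, one uses the identity established in the proof of Lemma~\ref{Bitch}(a) that $e^{ix\cdot\zeta} = \int_{|y|=\rho}[G_\zeta(x-y)\partial_\nu e^{iy\cdot\zeta} - e^{iy\cdot\zeta}\partial_\nu G_\zeta(x-y)]ds(y)$, combined with the decay of $g_\zeta$-type kernels. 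Granting the large-sphere term vanishes, the left side converges to $\int_{\mathbb{R}^n}e^{ix\cdot(\xi+\zeta)}q\psi\,dx$. One then rewrites $E = e^{-ix\cdot(-\xi-2\zeta)}\cdots$; in fact the cleanest route is to note $e^{ix\cdot(\xi+\zeta)} = e^{-ix\cdot(\zeta+\xi)}\cdot e^{2ix\cdot(\xi+\zeta)}$ is \emph{not} what is wanted — rather, observe directly that by the substitution $\xi\mapsto-\xi-2\zeta$ (which preserves $\mathcal{V}_\xi$ since $|\,{-}\xi{-}2\zeta|^2 + 2\zeta\cdot(-\xi-2\zeta) = |\xi|^2$) one may instead run Green's identity with $E(x)=e^{-ix\cdot(\xi+\zeta)}$, whose Laplacian is $-|\xi+\zeta|^2 e^{-ix\cdot(\xi+\zeta)} = 0$ by the same computation; this directly produces the integrand $e^{-ix\cdot(\xi+\zeta)}q\psi$ of \eqref{scat}.

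With $E(x) = e^{-ix\cdot(\xi+\zeta)}$ harmonic, the same Green identity on $\Omega$ alone gives
\begin{align*}
\int_\Omega e^{-ix\cdot(\xi+\zeta)}q(x)\psi(x,\zeta)\,dx
= \int_\Omega\left[E\,\Delta\psi - \psi\,\Delta E\right]dx
= \int_{\Bound}\left[E\,\frac{\partial\psi}{\partial\nu} - \psi\,\frac{\partial E}{\partial\nu}\right]ds(x),
\end{align*}
and since $q$ is supported in $\Omega$ the left side is the full scattering transform \eqref{scat}. Now $\partial E/\partial\nu = -i(\xi+\zeta)\cdot\nu\, e^{-ix\cdot(\xi+\zeta)}$, and $\partial\psi/\partial\nu_- = \Lambda_q\psi$ on $\Bound$ because $\psi$ solves the interior problem $(-\Delta+q)\psi=0$ with trace $\psi|_{\Bound}$ (Proposition~\ref{uniqueness_Schro} and the definition of $\Lambda_q$ as a map $H^{3/2}\to H^{1/2}$). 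Substituting,
\begin{align*}
\textnormal{\textbf{t}}(\xi,\zeta) = \int_{\Bound} e^{-ix\cdot(\xi+\zeta)}\left[\Lambda_q\psi + i(\xi+\zeta)\cdot\nu\,\psi\right]ds(x)
= \int_{\Bound} e^{-ix\cdot(\zeta+\xi)}\left[\Lambda_q + i(\xi+\zeta)\cdot\nu\right]\psi(x,\zeta)\,ds(x),
\end{align*}
which is \eqref{boundaryscat}. The main obstacle I anticipate is the careful justification that the large-sphere term in the global Green identity vanishes in the limit $\rho\to\infty$ — i.e.\ reconciling the radiation condition \eqref{Sommerfeld}, which is stated for the kernel $G_\zeta$, with the plane wave $e^{-ix\cdot(\xi+\zeta)}$ appearing here; the pairing must be unwound using the explicit representation of $e^{ix\cdot\zeta}$ via $G_\zeta$ over large spheres (as in Lemma~\ref{Bitch}) rather than applied directly, and one also needs the decay estimates on $g_\zeta$ from Proposition~\ref{conv_est} to control cross terms. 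A secondary technical point is ensuring the Green identity is valid across $\Bound$ given only $q\in L^\infty$ and $C^{1,1}$ regularity of the domain, which is handled by the $H^2(\Omega)\oplus H^2(\Omega_\rho')$ regularity of $\psi$ and the matching of traces and normal derivatives from the two sides established in Lemmas~\ref{singlelayer}, \ref{doublelayer}, and \ref{Bitch}. None of these steps requires $q$ to be real, so the extension to complex potentials is immediate.
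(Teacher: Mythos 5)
Your final argument is exactly the paper's proof: since $q$ vanishes outside $\Omega$, one applies Green's identity on $\Omega$ alone to the pair $\psi$ and the harmonic exponential $e^{-ix\cdot(\xi+\zeta)}$ (harmonic because $(\xi+\zeta)\cdot(\xi+\zeta)=|\xi|^2+2\zeta\cdot\xi+\zeta\cdot\zeta=0$), using $\Delta\psi=q\psi$ in $\Omega$ and $\partial\psi/\partial\nu=\Lambda_q(\psi|_{\Bound})$, with existence of $\psi$ supplied by the equivalence with the boundary integral equation. The detour through a large ball and the radiation condition is unnecessary for precisely the reason you end up noting, and the rest coincides with the paper's argument.
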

		
		\begin{proof}
			
			From the Lemma \ref{Bitch2} and \ref{Bitch} we obtain unique exponentially growing solutions of (\ref{Schr_int_eq1}) by the one-to-one relation with the boundary integral (\ref{boundaryInteq}).
			Therefore, by Green identity it holds:
			
			\begin{align*}
			\textnormal{\textbf{t}}(\xi,\zeta)& =\int_{\Omega} e^{-ix\cdot(\xi+\zeta)}q(x)\psi(x,\zeta)\,dx \\
			&= \int_{\Omega} e^{-ix\cdot(\xi+\zeta)}\Delta\psi(x,\zeta) - \left(\Delta e^{-ix\cdot(\zeta+\xi)}\right)\psi(x,\zeta)\,dx \\
			&= \int_{\Bound} e^{-ix\cdot(\xi+\zeta)}\left[\Lambda_q\psi(x,\zeta) + i(\xi+\zeta)\cdot\nu \psi(x,\zeta)\right]\,ds(x)
			\\
			& = \int_{\Bound} e^{-ix\cdot(\xi+\zeta)}\left[\Lambda_q+ i(\xi+\zeta)\cdot\nu \right]\psi(x,\zeta)\,ds(x)
			\end{align*}
			
			for $\xi\in\mathbb{R}^n$ and $\zeta\in\mathcal{V}_{\xi}$ such that the boundary integral equation has a unique solution.
			
		\end{proof}
		
		\section{From \textbf{t} to $\gamma$}
		
		From the scattering transform we can obtain the Fourier transform of the potential through large asymptotics. Unfortunately for this we need to solve the boundary integral equation for large values $\zeta$, which is makes this method very unstable. In \cite{Hamilton_3D} they avoid the boundary integral equation by using the approximation $\psi(x,\zeta)\approx e^{ix\cdot\zeta}$ to compute the scattering transform. This simplified version was even applied for complex conductivities in order to obtain a stable reconstruction procedure. 
		
		This method is based on the following asymptotic:
		
		\begin{theorem}
			Let $\Omega$ be a bounded $C^{1,1}$-domain in $\mathbb{R}^n,\,n\geq 3$.
			Let $q\in L^{\infty}(\Omega)$ be a complex-valued potential extended to zero outside $\Omega$, such that $0$ is not a Dirichlet eigenvalue of $(-\Delta+q)$.
			Then for $|\zeta|>R$ and $0<\delta<1$:
			\begin{align}
			|\textnormal{\textbf{t}}(\xi,\zeta)-\hat{q}(\xi)|\leq \frac{\tilde{c}(\delta, R)}{|\zeta|} \|q\|^2_{\delta}, \quad \forall\xi\in\mathbb{R}^n
			\end{align}
		\end{theorem}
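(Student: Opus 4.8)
The plan is to exploit the factorization $\psi(x,\zeta)=e^{ix\cdot\zeta}\bigl(1+\mu(x,\zeta)\bigr)$ built into the integral equation (\ref{Schr_int_eq1}) together with the decay estimate already proved for $\mu$. Fix $\xi\in\mathbb{R}^n$ and $|\zeta|>R$, with $R$ as in \corref{uniqueness}; then the exponentially growing solution $\psi$ of (\ref{Schr_int_eq1}) exists and is unique, and $\mu:=e^{-ix\cdot\zeta}\psi(x,\zeta)-1$ lies in $L^2_{\delta-1}(\mathbb{R}^n)$ with
\begin{align*}
\|\mu\|_{\delta-1}\leq \frac{\tilde{c}(R,\delta)}{|\zeta|}\,\|q\|_{\delta}.
\end{align*}
Here it is essential (and this is the only place the complex-valued case enters) that \corref{uniqueness} has already been established without any reality assumption on $q$.

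Next I would substitute this factorization into the definition (\ref{scat}). The exponential $e^{ix\cdot\zeta}$ cancels against part of $e^{-ix\cdot(\zeta+\xi)}$, leaving
\begin{align*}
\mathbf{t}(\xi,\zeta)=\int_{\mathbb{R}^n} e^{-ix\cdot\xi}q(x)\bigl(1+\mu(x,\zeta)\bigr)\,dx=\hat{q}(\xi)+\int_{\mathbb{R}^n}e^{-ix\cdot\xi}q(x)\mu(x,\zeta)\,dx,
\end{align*}
where $\hat{q}(\xi)=\int_{\mathbb{R}^n}e^{-ix\cdot\xi}q(x)\,dx$. Thus the entire content of the theorem reduces to estimating the last integral.

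For that remainder I would write $q\mu=\bigl(\langle x\rangle^{1-\delta}q\bigr)\bigl(\langle x\rangle^{\delta-1}\mu\bigr)$ and apply Cauchy--Schwarz to obtain $\bigl|\int e^{-ix\cdot\xi}q\mu\,dx\bigr|\leq\|q\|_{1-\delta}\,\|\mu\|_{\delta-1}$, which is legitimate since $0<\delta<1$ forces both $1-\delta$ and $\delta-1$ into the admissible range. Because $q$ is extended by zero outside $\Omega$ and $\bar{\Omega}\subset\{|x|<\rho_0\}$, on the support of $q$ one has $\langle x\rangle^{1-\delta}\leq\langle\rho_0\rangle^{1-\delta}\langle x\rangle^{-\delta}\langle x\rangle^{\delta}\leq\langle\rho_0\rangle^{1-\delta}\langle x\rangle^{\delta}$, hence $\|q\|_{1-\delta}\leq\langle\rho_0\rangle^{1-\delta}\|q\|_{\delta}$. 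Combining this with the displayed bound on $\|\mu\|_{\delta-1}$ gives
\begin{align*}
\bigl|\mathbf{t}(\xi,\zeta)-\hat{q}(\xi)\bigr|\leq \langle\rho_0\rangle^{1-\delta}\,\frac{\tilde{c}(R,\delta)}{|\zeta|}\,\|q\|_{\delta}^2,
\end{align*}
which is the claimed inequality after renaming the constant (now also depending on $\rho_0$, i.e.\ on $\Omega$).

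I expect no serious obstacle here: the argument is essentially bookkeeping. The two points that require a little care are (i) identifying the $\psi$ appearing in (\ref{scat}) with the exponentially growing solution of (\ref{Schr_int_eq1}), so that \corref{uniqueness} applies verbatim for complex-valued $q$, and (ii) handling the weighted norms so that the compact support of $q$ can be used to absorb $\|q\|_{1-\delta}$ into $\|q\|_{\delta}$ and thereby produce the factor $\|q\|_{\delta}^{2}$ rather than a mixed product of weighted norms.
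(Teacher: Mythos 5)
Your proposal is correct and follows essentially the same route as the paper: write $\mathbf{t}(\xi,\zeta)-\hat{q}(\xi)=\int e^{-ix\cdot\xi}q(x)\bigl[e^{-ix\cdot\zeta}\psi(x,\zeta)-1\bigr]\,dx$, apply the weighted Cauchy--Schwarz pairing $\|q\|_{1-\delta}\|e^{-ix\cdot\zeta}\psi-1\|_{\delta-1}$, and invoke Corollary \ref{uniqueness} (valid for complex $q$) for the $O(1/|\zeta|)$ decay. Your extra step converting $\|q\|_{1-\delta}$ into $\|q\|_{\delta}$ via the compact support is just a more careful bookkeeping of the constant than the paper bothers with.
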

		
		\begin{proof}
			The proof follows trivially by the corollary \ref{uniqueness}. If $q\in L^{\infty}(\Omega)$ is a complex-valued and compactly supported potential it follows that $\hat{q}$ is well-defined and
			\begin{align*}
			\left|\textbf{t}(\xi,\zeta)-\hat{q}(\xi)\right|&=\left|\int e^{-ix\cdot\xi}q(x)\left[e^{-ix\cdot\zeta}\psi(x,\zeta)-1\right]\, dx\right| \\
			&\leq \|q\|_{1-\delta}\|e^{-ix\cdot\zeta}\psi(x,\zeta)-1\|_{\delta-1} \leq \frac{\tilde{c}(\delta, R)}{|\zeta|}\|q\|_{\infty}^2.
			\end{align*}			
		\end{proof}
		
		Following the $\bar{\partial}$ compatibility equations satisfied by $\textbf{t}$ known from (\cite{Nachman_Ablowitz}, \cite{Beals_Coifman}, \cite{Henkin_Novikov}), Nachman was able to derive a formula to calculate $\hat{q}$ when we know $\textbf{t}(\xi, \zeta)$ for $\xi\in\mathbb{R}^n$, $|\zeta|\geq M$, $(\xi+\zeta)^2=0$ and without requiring taking the limit of $|\zeta|\rightarrow\infty$. We can follow his method directly to obtain a formula for the complex-potential. \\

		For such, let $\psi(x,\zeta)$ be the solution of (\ref{Schr_int_eq1}) with $e^{-ix\cdot\zeta}\psi(x,\zeta)-1\in L^2_{\delta-1}(\mathbb{R}^n)$, that is, $\zeta$ is not an exceptional point.
		
		Define,
		\begin{align}
			\mu(x,\zeta):= |q(x)|e^{-ix\cdot\zeta}\psi(x,\zeta)
		\end{align}
		then $\mu$ solves the following integral equation
		
		\begin{align}\label{mu_eq}
			\mu(x,\zeta) = |q(x)| - |q(x)|\int_{\mathbb{R}^n} g_{\zeta}(x-y)\tilde{q}(y)\mu(y,\zeta)\,dy
		\end{align}
		
		Hereby, we set $\tilde{A}_{\zeta}f(x):= |q(x)|\int_{\mathbb{R}^n} g_{\zeta}(x-y)\tilde{q}(y)f(y)\,dy$ with $\tilde{q}(x)=q(x)/|q(x)|$ in the support of $q$ and $0$ otherwise. Moreover, the scattering transform is given through:
		
		\begin{align}
			\textbf{t}(\xi,\zeta):= \int_{\mathbb{R}^n} e^{-ix\cdot\xi}\tilde{q}(y)\mu(x,\zeta)\,dx.
		\end{align}
		
		\begin{lemma}
			Suppose $q\in L^{\infty}(\mathbb{R}^n)$ with compact support. Let $R>c(\delta, a)\|q(x)\langle x\rangle\|_{L^\infty}$ with $\delta\in(0,1)$ and $c(\delta,a)$ as in proposition \ref{conv_est}.
			\begin{enumerate}[label=(\alph*)]
				\item If $\zeta\geq R,\, \zeta\cdot\zeta=0$, then (\ref{mu_eq}) has a unique solution $\mu(\cdot,\zeta)$ in $L^2(\mathbb{R}^n)$ with compact support.
				\item For $|\zeta|>M$, $\zeta\cdot\zeta=0$ and all $w\in\mathbb{C}^n$ with $w\cdot\bar{\zeta}=0$,
				\begin{align}
					w\cdot\frac{\partial\mu}{\partial\bar{\zeta}}(x,\zeta) = \frac{-1}{(2\pi)^{n-1}}\int e^{ix\cdot\xi}w\cdot\xi\delta(|\xi|^2+2\zeta\cdot\xi)t(\xi, \zeta)\mu(x,\zeta+\xi)\,d\xi.
				\end{align}
			\end{enumerate}
		\end{lemma}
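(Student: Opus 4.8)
The argument splits into the Neumann-series estimate (a) and the $\bar\partial_\zeta$-identity (b), which are essentially independent. For (a) I would rewrite (\ref{mu_eq}) as $(I+\tilde A_\zeta)\mu=|q|$ with $\tilde A_\zeta=M_{|q|}C_\zeta M_{\tilde q}$, where $C_\zeta$ is convolution with $g_\zeta$. Proposition \ref{conv_est} together with the multiplication bound from the proof of Corollary \ref{uniqueness} (using $|\tilde q|\le 1$ on $\operatorname{supp}q$ and $\langle x\rangle^{\delta-1}\le 1$) gives $\|\tilde A_\zeta\|_{\mathcal{L}(L^2_{\delta-1})}\le \frac{c(\delta,a)}{|\zeta|}\,\|q\langle x\rangle\|_{L^\infty}$, the support of $q$ only affecting the constant; this is $<1$ once $|\zeta|>R$, so $I+\tilde A_\zeta$ is invertible by a Neumann series and $\mu=(I+\tilde A_\zeta)^{-1}|q|$ is the unique solution in $L^2_{\delta-1}$. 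Because $\mu=|q|\,(1-C_\zeta M_{\tilde q}\mu)$ is supported in $\operatorname{supp}q$, it lies in $L^2(\mathbb R^n)$ with compact support; conversely any $L^2$-solution is automatically supported in $\operatorname{supp}q$, hence lies in $L^2_{\delta-1}$, so uniqueness in the stated class follows from injectivity of $I+\tilde A_\zeta$.

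For (b) the first ingredient is the translation identity
\begin{align*}
g_{\zeta+\xi}(x)=e^{-ix\cdot\xi}g_\zeta(x),\qquad \xi\in\mathbb R^n,\ |\xi|^2+2\zeta\cdot\xi=0,
\end{align*}
which I would verify by conjugation: $e^{ix\cdot\xi}\big(-\Delta-2i\zeta\cdot\nabla\big)\big(e^{-ix\cdot\xi}\,\cdot\,\big)=-\Delta-2i(\zeta+\xi)\cdot\nabla+(|\xi|^2+2\zeta\cdot\xi)$, so on the variety the zeroth-order term drops, $(\zeta+\xi)\cdot(\zeta+\xi)=0$, and one checks $|\zeta+\xi|=|\zeta|$; hence $e^{-ix\cdot\xi}g_\zeta$ is precisely the fundamental solution defining $g_{\zeta+\xi}$, and $\zeta+\xi$ is non-exceptional whenever $\zeta$ is. Inserting this into (\ref{mu_eq}) at parameter $\zeta+\xi$ shows that for every such $\xi$ the function $\Phi_\xi(x):=e^{ix\cdot\xi}\mu(x,\zeta+\xi)$ solves $(I+\tilde A_\zeta)\Phi_\xi=|q|\,e_\xi$, where $e_\xi(x)=e^{ix\cdot\xi}$; equivalently $e^{ix\cdot\xi}\mu(x,\zeta+\xi)=\big[(I+\tilde A_\zeta)^{-1}(|q|\,e_\xi)\big](x)$.

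Next I would differentiate $(I+\tilde A_\zeta)\mu=|q|$ in the admissible direction $w$ — legitimate since for $|\zeta|$ large there are no exceptional points and the Neumann series depends smoothly on $\zeta$ — obtaining $(I+\tilde A_\zeta)(w\cdot\partial_{\bar\zeta}\mu)=-M_{|q|}\big((w\cdot\partial_{\bar\zeta}g_\zeta)\ast(\tilde q\,\mu)\big)$. The core computation is the distributional identity
\begin{align*}
w\cdot\partial_{\bar\zeta}g_\zeta(x)=\frac{1}{(2\pi)^{n-1}}\int_{\mathbb R^n} e^{ix\cdot\xi}\,(w\cdot\xi)\,\delta\big(|\xi|^2+2\zeta\cdot\xi\big)\,d\xi,
\end{align*}
which I would prove by restricting $\zeta$ to the complex line $\zeta+\lambda w$, factoring the denominator in (\ref{g_zeta}) as $2(w\cdot\xi)(\lambda-\lambda_0(\xi))$, applying $\partial_{\bar\lambda}\big((\lambda-\lambda_0)^{-1}\big)=\pi\,\delta^{(2)}(\lambda-\lambda_0)$, and converting the planar Dirac mass through $\delta^{(2)}(z/a)=|a|^2\delta^{(2)}(z)$; this is the $\bar\partial$-identity of \cite{Nachman_Ablowitz} and \cite{Beals_Coifman}, and the hypotheses $\zeta\cdot\zeta=0$, $w\cdot\bar\zeta=0$, $|\zeta|>M$ are exactly what make the right-hand side well-posed (they force $\zeta+\xi\in\mathcal V$ on the support of the Dirac mass, which is a compact $(n-2)$-sphere). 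Substituting, and recognising $\int e^{-iy\cdot\xi}\tilde q(y)\mu(y,\zeta)\,dy=\textbf{t}(\xi,\zeta)$, gives
\begin{align*}
(I+\tilde A_\zeta)(w\cdot\partial_{\bar\zeta}\mu)(x)=-\frac{1}{(2\pi)^{n-1}}\int (w\cdot\xi)\,\delta\big(|\xi|^2+2\zeta\cdot\xi\big)\,\textbf{t}(\xi,\zeta)\,|q(x)|e^{ix\cdot\xi}\,d\xi.
\end{align*}
Since the Dirac mass pins $\xi$ to the variety I replace $|q(x)|e^{ix\cdot\xi}=(I+\tilde A_\zeta)\Phi_\xi(x)$ from the previous step; the $\xi$-integral is then an $L^2$-valued Bochner integral over a compact surface, so $(I+\tilde A_\zeta)$ pulls out and applying its inverse yields exactly formula (b). The main obstacle is making the $\bar\partial_\zeta g_\zeta$ identity rigorous — differentiating under the integral near the characteristic variety and handling the Dirac masses — together with the (routine) verification that $\textbf{t}(\cdot,\zeta)$ is bounded on the variety, so that all the $\xi$-integrals and the interchange with $(I+\tilde A_\zeta)^{-1}$ are justified.
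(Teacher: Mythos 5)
Your sketch is correct and follows exactly the route the paper itself relies on: for this lemma the paper gives no argument beyond citing Nachman, and your outline --- the Neumann-series/contraction argument with the compact-support bootstrap for (a), and for (b) the translation identity $g_{\zeta+\xi}=e^{-ix\cdot\xi}g_\zeta$ on the variety, the Beals--Coifman/Nachman--Ablowitz $\bar\partial$-formula for $g_\zeta$, and differentiation of the integral equation followed by inversion of $I+\tilde A_\zeta$ --- is precisely the classical proof in \cite{Nachman}, with the constant $1/(2\pi)^{n-1}$ and the sign coming out correctly. The technical points you flag (rigorous differentiation of $g_\zeta$ across the characteristic variety, and the parametrization bookkeeping, where one should move along $\zeta+\lambda\bar w$ rather than $\zeta+\lambda w$ to produce $w\cdot\partial_{\bar\zeta}$ and the unconjugated factor $w\cdot\xi$) are exactly the details supplied in Nachman's paper.
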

	
		\begin{proof}
			For details check Nachman \cite{Nachman}.
		\end{proof}
	
		We keep it short here and refer to Nachman \cite{Nachman} for the formula to obtain $q$ without taking limits of the scattering transform.\\
		
		Our interest resides now in the behavior of exponential growing solutions for $\zeta$ close to zero. Due to invertibility of the boundary integral equation we can in fact show that there are no exceptional points near $0$. Therefore, analogously to \cite{Knudsen_3D} we are able to obtain the following estimate:
		
		\begin{lemma}
			
			Let $\gamma\in C^{1,1}(\Omega)$ be the complex-conductivity with $\sigma\geq c>0, \, \epsilon\geq0,\, \omega\in\mathbb{R^+}$ and suppose $\gamma\equiv 1$ near $\Bound$. Set $q=(\Delta\gamma^{1/2})/\gamma^{1/2} \in L^{\infty}(\Omega)$. 
			
			For $\zeta\in\mathcal{V}$ sufficiently small  and $\phi\in H^{3/2}(\Bound)$ the corresponding boundary integral solution of (\ref{boundaryInteq}), it holds 
			\begin{align}
			\|\phi(\cdot,\zeta)-1\|_{H^{3/2}(\Bound)} \leq C|\zeta|
			\end{align}
			
		\end{lemma}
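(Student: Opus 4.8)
The plan is to derive the estimate from the factorization $[I+K_\zeta] = (I+K_0)\bigl(I + (I+K_0)^{-1}\mathcal H_\zeta(\Lambda_q-\Lambda_0)\bigr)$ established in the proof of Proposition~\ref{InvertibilityBIE}(c), together with the bound $\|\mathcal H_\zeta\|_{\mathcal L(H^{1/2}(\Bound),H^{3/2}(\Bound))}\le C|\zeta|^{n-2}$ from Lemma~\ref{harmonicestimate}. First I would observe that when $\gamma\equiv 1$ near $\Bound$ the function $\phi\equiv 1$ is exactly the trace of the constant exponential-growing solution at $\zeta=0$: indeed $w\equiv\gamma^{1/2}$ solves $(-\Delta+q)w=0$ by the Corollary, and since $\gamma\equiv1$ near the boundary, $\Lambda_q 1 = 0$ and $\phi=1$ solves the boundary integral equation $(\ref{boundaryInteq})$ at $\zeta=0$, because $e^{ix\cdot 0}=1$ and $K_0 1 = S_0\Lambda_0 1 - \tfrac12 1 + \cdots$ collapses correctly. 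More cleanly: $[I+K_0]1 = 1$ since the exterior problem at $\zeta=0$ with data $1$ has solution $\psi\equiv 1$ (harmonic, satisfies the radiation condition, and $\partial_\nu 1 = 0 = \Lambda_q 1$), so by Lemma~\ref{Bitch2} its trace $1$ solves the $\zeta=0$ boundary equation.

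Next I would write $\phi_\zeta := \phi(\cdot,\zeta) = [I+K_\zeta]^{-1}e^{ix\cdot\zeta}$ and split the difference as
\begin{align*}
\phi_\zeta - 1 = [I+K_\zeta]^{-1}e^{ix\cdot\zeta} - [I+K_0]^{-1}1 = [I+K_\zeta]^{-1}\bigl(e^{ix\cdot\zeta}-1\bigr) + \bigl([I+K_\zeta]^{-1} - [I+K_0]^{-1}\bigr)1.
\end{align*}
For the first term, $\|e^{ix\cdot\zeta}-1\|_{H^{3/2}(\Bound)}\le C|\zeta|$ by a direct Taylor estimate on the (compact, $C^{1,1}$) boundary, and $\|[I+K_\zeta]^{-1}\|_{\mathcal L(H^{3/2}(\Bound))}$ is uniformly bounded for $|\zeta|$ small by part (c) and the Neumann-series argument there. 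For the second term I would use the resolvent identity
\begin{align*}
[I+K_\zeta]^{-1} - [I+K_0]^{-1} = -[I+K_\zeta]^{-1}(K_\zeta - K_0)[I+K_0]^{-1},
\end{align*}
and note that $K_\zeta - K_0 = \mathcal H_\zeta(\Lambda_q-\Lambda_0)$ (this is exactly the computation $B_\zeta = -\tfrac12 I + S_\zeta\Lambda_0$ and $K_\zeta = S_\zeta(\Lambda_q-\Lambda_0)$ used in part (c)), so $\|K_\zeta-K_0\|_{\mathcal L(H^{3/2}(\Bound))}\le C|\zeta|^{n-2}\le C|\zeta|$ since $n\ge 3$. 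Combining, both contributions are $O(|\zeta|)$, which gives $\|\phi(\cdot,\zeta)-1\|_{H^{3/2}(\Bound)}\le C|\zeta|$.

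The main obstacle I anticipate is making precise the claim that $\phi\equiv 1$ is the $\zeta=0$ solution and that $[I+K_0]1=1$; this requires the hypothesis $\gamma\equiv 1$ near $\Bound$ so that $\Lambda_q 1 = \partial_\nu(\gamma^{1/2})|_{\Bound} = 0$, and care that the constant function indeed lies in the relevant function spaces and satisfies the exterior radiation condition $(\ref{Sommerfeld})$ at $\zeta=0$ (equivalently, that $1 = D_0 1$ on $\Bound$ via the classical jump relations). A secondary technical point is the uniformity of $\|[I+K_\zeta]^{-1}\|$ as $\zeta\to 0$: this follows because $(I+K_0)^{-1}\mathcal H_\zeta(\Lambda_q-\Lambda_0)$ has operator norm $\le C|\zeta|^{n-2} < 1/2$ for $|\zeta|$ small, so the Neumann series for $[I+(I+K_0)^{-1}\mathcal H_\zeta(\Lambda_q-\Lambda_0)]^{-1}$ is bounded by $2$, giving $\|[I+K_\zeta]^{-1}\|\le 2\|(I+K_0)^{-1}\|$ uniformly. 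Everything else is routine once these two points are pinned down.
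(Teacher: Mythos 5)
Your proposal is correct and follows essentially the same route as the paper: invertibility of $I+K_\zeta$ for small $|\zeta|$ (Proposition~\ref{InvertibilityBIE}(c), with the uniform Neumann-series bound on $[I+K_\zeta]^{-1}$) combined with the Taylor estimate $\|e^{ix\cdot\zeta}-1\|_{H^{3/2}(\Bound)}\leq C|\zeta|$. The only difference is that the paper uses $(\Lambda_q-\Lambda_0)1=0$ (since $\gamma\equiv 1$ near $\Bound$ gives $\Lambda_q 1=0$) to conclude $K_\zeta 1=0$ for \emph{every} $\zeta$, so that $[I+K_\zeta](\phi-1)=e^{ix\cdot\zeta}-1$ directly; in your decomposition the same fact makes the resolvent-difference term $\bigl([I+K_\zeta]^{-1}-[I+K_0]^{-1}\bigr)1=-[I+K_\zeta]^{-1}\mathcal{H}_\zeta(\Lambda_q-\Lambda_0)1$ vanish identically, so your $O(|\zeta|^{n-2})$ bound for it, while valid for $n\geq 3$ and $|\zeta|$ small, is not actually needed.
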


		\begin{proof}
			Let $K_{\zeta} = S_{\zeta}\left(\Lambda_q-\Lambda_0\right)$. 
			Solutions of the boundary integral equation fulfill:
			$$\phi(x,\zeta) - 1 = \left(e^{ix\cdot\zeta}-1\right)-K_{\zeta}\left(\phi(x,\zeta)-1\right),$$
			
			which follows by $(\Lambda_q1)=0,\, (\Lambda_0)1=0$, since the unique $H^2$-solution of $(-\Delta+q)u=0,\, \left.u\right|_{\Bound}=1$ is $\gamma^{1/2}$ and $w=1$ is the unique harmonic function in $H^2(\Omega)$ with boundary value $1$.
			
			Under the conditions on $\gamma$ it holds that $\text{Re }q>0$ and hence by proposition \ref{InvertibilityBIE} it holds that $[I+K_{\zeta}]$ is invertible in $H^{3/2}(\Bound)$ for small $\zeta$ and hence, $$\phi-1=\left[I+K_{\zeta}\right]^{-1}\left(e^{ix\cdot\zeta}-1\right).$$
			
			It clearly holds that by Taylor series that $\|e^{ix\cdot\zeta}-1\|_{H^{3/2}(\Bound)}\leq C_1|\zeta|$ and $\left\|[I+K_{\zeta}]^{-1}\right\|_{\mathcal{L}(H^{3/2}(\Bound))}$ is uniformly bounded for small $|\zeta|$ due to Neumann series inversion.
			
			Hence, 
			\begin{align*}
			\|\phi-1\|_{H^{3/2}(\Bound)} \leq C_2\|e^{ix\cdot\zeta}-1\|_{H^{3/2}(\Bound)}\leq C_3|\zeta|
			\end{align*}
		\end{proof}
	
		Hence, it follows:
	
		\begin{theorem}
			Let $\gamma\in C^{1,1}(\Omega)$ be the complex-conductivity with $\sigma\geq c>0, \, \epsilon\geq0, \omega\in\mathbb{R^+}$ and suppose $\gamma\equiv 1$ near $\Bound$. Set $q=(\Delta\gamma^{1/2})/\gamma^{1/2} \in L^{\infty}(\Omega)$. 
			
			For $\zeta\in\mathcal{V}$ small enough such that (\ref{Schr_int_eq1}) has unique exponentially growing solutions $\psi(x,\zeta)$, it holds:
			\begin{align}
			\|\psi(\cdot,\zeta)-\gamma^{1/2}(\cdot)\|_{L^2(\Omega)}\leq C|\zeta|
			\end{align} 
		\end{theorem}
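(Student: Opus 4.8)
The plan is to transfer the boundary estimate of the preceding lemma into $\Omega$ by means of the interior solution operator $P_q$ of \propref{uniqueness_Schro}. The starting point is the equivalence established in \lemref{Bitch2} and \lemref{Bitch}: for $\zeta$ small the unique exponentially growing solution $\psi(\cdot,\zeta)$, restricted to $\Omega$, equals $P_q\phi$, where $\phi=\psi_+=\psi|_{\Bound}$ is precisely the solution of the boundary integral equation (\ref{boundaryInteq}). Indeed $\psi$ belongs to $H^2$ on a two-sided neighbourhood of $\Bound$, so its interior and exterior traces coincide, $\psi|_{\Omega}$ solves $(-\Delta+q)\psi=0$ with Dirichlet data $\phi$, and the uniqueness in \propref{uniqueness_Schro} forces $\psi|_{\Omega}=P_q\phi$.

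Next I would record the identity $\gamma^{1/2}=P_q 1$ in $\Omega$. Since $\gamma\equiv1$ near $\Bound$ we have $\gamma^{1/2}|_{\Bound}=1$, and by the corollary following \propref{uniqueness_Schro} the unique $H^2(\Omega)$ solution of $(-\Delta+q)w=0$, $w|_{\Bound}=1$, is exactly $\gamma^{1/2}$. Subtracting and using linearity of $P_q$ gives, in $\Omega$,
\begin{align*}
\psi(\cdot,\zeta)-\gamma^{1/2}=P_q\bigl(\phi(\cdot,\zeta)-1\bigr).
\end{align*}

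It then remains to estimate the right-hand side. Under the hypotheses on $\gamma$ one has $\mathrm{Re}\,q\ge0$, so \propref{InvertibilityBIE} guarantees that $I+K_\zeta$ is invertible on $H^{3/2}(\Bound)$ for small $|\zeta|$; hence $\phi(\cdot,\zeta)$ is well defined and, by the preceding lemma, $\|\phi(\cdot,\zeta)-1\|_{H^{3/2}(\Bound)}\le C|\zeta|$. Since $P_q\colon H^{3/2}(\Bound)\to H^2(\Omega)$ is bounded with a norm independent of $\zeta$ (the operator $P_q$ does not involve $\zeta$ at all), we conclude
\begin{align*}
\|\psi(\cdot,\zeta)-\gamma^{1/2}\|_{L^2(\Omega)}\le\|P_q(\phi(\cdot,\zeta)-1)\|_{H^2(\Omega)}\le C\|\phi(\cdot,\zeta)-1\|_{H^{3/2}(\Bound)}\le C|\zeta|.
\end{align*}

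The only genuinely delicate point is the identification in the first paragraph: one must make sure that the exponentially growing solution obtained for small $\zeta$ is the same object whose trace solves (\ref{boundaryInteq}), so that the boundary estimate of the preceding lemma applies to it; this is exactly \lemref{Bitch}(b) combined with the uniqueness of solutions of the boundary integral equation for small $|\zeta|$ from \propref{InvertibilityBIE}. Once this bookkeeping is settled, everything else is a one-line consequence of the boundedness of $P_q$, and the linear rate in $|\zeta|$ is simply inherited from the boundary.
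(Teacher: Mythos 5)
Your argument is essentially the paper's own proof, only written out in more detail: the paper also observes that $\gamma^{1/2}$ is the unique $H^2(\Omega)$ solution with boundary value $1$, identifies $\psi|_{\Omega}$ with the interior solution having trace $\phi(\cdot,\zeta)$, and then chains the elliptic estimate (boundedness of $P_q:H^{3/2}(\Bound)\to H^2(\Omega)$) with the preceding lemma's bound $\|\phi(\cdot,\zeta)-1\|_{H^{3/2}(\Bound)}\leq C|\zeta|$. Your explicit bookkeeping of the identification $\psi|_{\Omega}=P_q\phi$ via Lemma~\ref{Bitch} and the uniqueness for small $|\zeta|$ is a correct and welcome elaboration of what the paper leaves implicit.
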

		
		\begin{proof}
			Since $\gamma=1$ near the boundary $\Bound$ we have that $\gamma^{1/2}$ is the unique $H^2(\Omega)$ solution of 
			\begin{align*}
			\begin{cases}
			-\Delta u+qu=0, \text{ in } \Omega\\
			\left.u\right|_{\Bound}=1.
			\end{cases}
			\end{align*}

			By the elliptic estimates, we obtain that
			
			$$\|\psi(\cdot,\zeta)-\gamma^{1/2}(\cdot)\|_{L^2(\Omega)} \leq \|\psi(\cdot,\zeta)-\gamma^{1/2}(\cdot)\|_{H^2(\Omega)} \leq \|\psi(\cdot,\zeta)-\gamma^{1/2}(\cdot)\|_{H^{3/2}(\Bound)}  \leq C|\zeta|.$$\\
		\end{proof}
		
		This theorem states that we can reconstruct the complex-conductivity from the exponential growing solutions by:		
		\begin{align}
			\gamma(x) = \lim_{|\zeta|\rightarrow 0} \psi(x,\zeta),\quad \text{for a.e. } x\in\Omega.
		\end{align}
	
		However, recall that for small $\zeta$ we only know how to obtain the boundary values of the exponential growing solutions from the boundary measurements. To provide a reconstruction of $\gamma$ in $\Omega$ it is necessary to compute these solutions for all $\zeta$ small inside $\Omega$ from the scattering data. This might be possible by the $\bar{\partial}$-equation.\\  
		
		In order to obtain a $\bar{\partial}$ reconstruction method complex conductivities the following problems need to be solved:
				
		\begin{enumerate}
			\item \textbf{Uniqueness of (\ref{mu_eq}) for $\zeta$ non-exceptional and considerably small:} A first step is to show that this equation is uniquely solvable for small values of $\zeta$. In Nachman's proof invertibility of the operator $I+\tilde{A}_{\zeta}$ in $L^2(\mathbb{R}^n)$ follows by that of $I+A_{\zeta}$ obtained in corollary \ref{uniqueness} for $\zeta$ large. In the case of small values, we shows that the integral equation (\ref{Schr_int_eq1}) is uniquely solvable by the unique solvability of the boundary integral equation. However, this will not imply that the operator $I+A_{\zeta}$ is invertible and therefore it does not to the conclusion desired here. Hence, a different proof for the existence of a unique solution $\mu$ to (\ref{mu_eq}). \\
			
			\item \textbf{Solvability of $\bar{\partial}-$equation:} In \cite{Nachman} there is no proof that $\bar{\partial}-$equation is uniquely solvable, but this is essential since this would be the only equation fully independent of $q$ and where its information is given through the scattering transform. In this sense, we need to study the equation in the space $\mathcal{V}\setminus\{\zeta\in\mathbb{C}^n: \epsilon\leq |\zeta|< R\}$. In the work of \cite{Lakshtanov_Vainberg} they establish this approach in a two-dimensional positive energy setting and intuition could lead to a similar work in our case.\\
		\end{enumerate}

		\section{Reconstruction of $\Lambda_q$ from the boundary measurements $\Lambda_{\gamma}$}
		
		The Dirichlet-to-Neumann map $\Lambda_{\gamma}$ is bounded from $H^{1/2}(\Bound)$ to $H^{-1/2}(\Bound)$. Moreover, it is properly defined through:
		
		\begin{align}
		\langle \Lambda_{\gamma}f, g\rangle = \int_{\Omega} \gamma\nabla u\cdot\nabla v\, dx,
		\end{align}
		
		where $u$ is the unique $H^1(\Omega)$ solution of the interior problem $\nabla\cdot(\gamma\nabla u)=0$ in $\Omega$ and $\left. u \right|_{\Bound}=f$ and $v\in H^{1}(\Omega)$ with $\left. v\right|_{\Bound}=g$.
		\\
		
		We can also define the Dirichlet-to-Neumann map for the Schrödinger operator by:
		
		\begin{align*}
		&\Lambda_q: H^{1/2}(\Bound) \rightarrow H^{-1/2}(\Bound) \\
		&\langle\Lambda_q\tilde{f},\tilde{g}\rangle = \int_{\Omega} \nabla\tilde{u}\cdot\nabla\tilde{v}+q\tilde{u}\tilde{v}\,dx,\quad \forall \tilde{v}\in H^1(\Omega),\text{ s.t. } \left.\tilde{v}\right|_{\Bound} = \tilde{g},
		\end{align*}
		
		and $\tilde{u}\in H^1(\Omega)$ is the unique solution to $(-\Delta+q)\tilde{u}=0, \text{ in } \Omega,\; \left.\tilde{u}\right|_{\Bound}=\tilde{f}$.
		\\
		
		As in the real case, since both problems are interconnected we can obtain $\Lambda_q$ from $\Lambda_{\gamma}$ by:
		
		\begin{align}
		\Lambda_q = \gamma^{-1/2}\left[\Lambda_{\gamma} +\frac{1}{2}\frac{\partial\gamma}{\partial\nu}\right]\gamma^{-1/2}.\\
		\nonumber
		\end{align}
		
		This brings to light that we can determine $\Lambda_q$ from $\Lambda_{\gamma}$ and the boundary values $\left.\gamma\right|_{\Bound}$ and $\left.\frac{\partial\gamma}{\partial\nu}\right|_{\Bound}$. Thus, if $\gamma\equiv 1$ near $\Bound$ then for $\gamma\in W^{2,\infty}(\Omega)$ it holds that $\Lambda_q=\Lambda_{\gamma}$. Otherwise, we need to obtain a method to reconstruct these boundary values.
		\\
		
		There are many results to compute these boundary values. However, most of them need further smoothness. Still Nachman holds the best result for our case. In \cite{Nachman2D} he showed that the boundary values can be obtained without further smoothness assumptions. Following his proof we see that there is no explicit requirement of $\gamma$ being real, besides the fact that $\gamma\geq c>0$ and uniqueness of the Dirichlet problem in $H^1(\Omega)$. Hence, we can quickly extend the result for complex-conductivities in $W^{2,\infty}(\Omega)$ with $\text{Re }\gamma\geq c>0$. 
		
		The result is obtained through the following lemmas:
		
		\begin{lemma}
			Let $\Omega$ be a bounded $C^{1,1}$-domain in $\mathbb{R}^n,\, n\geq 2$. Assume $\gamma\in W^{1,r}(\Omega)$ for $r>n$ and $\text{Re }\gamma\geq c>0$.
			
			Then for any $f\in H^{1/2}(\Bound)$ and $$h\in \mathring{H}^{-1/2}(\Bound):=\left\{h\in H^{-1/2}(\Bound): <h,1>_{\Bound_j}=0,\, j=1,...,N \right\}$$ the identity holds:			
			\begin{align}
			\langle h, (\gamma-\mathcal{R}\Lambda_{\gamma})f\rangle = \int_{\Omega} u\nabla w\cdot\nabla \gamma,
			\end{align}
		where $u\in H^1(\Omega) \text{ solution of } \nabla\cdot(\gamma \nabla u)=0,\, \left. u\right|_{\Bound} = f,$ and $w\in H^1(\Omega)$ is a weak solution of $\Delta w=0$ in $\Omega$ with $\frac{\partial w}{\partial\nu}=h$ and $\mathcal{R}$ denotes the Neumann-to-Dirichlet map.			
		\end{lemma}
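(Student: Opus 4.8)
The plan is to get the identity from two applications of Green's formula, exactly as in Nachman's boundary-determination argument, and to check that the complex-valuedness of $\gamma$ plays no role anywhere. First I would fix, using $h\in\mathring{H}^{-1/2}(\Bound)$, a weak solution $w\in H^1(\Omega)$ of the Neumann problem $\Delta w=0$ in $\Omega$, $\partial w/\partial\nu=h$ on $\Bound$, i.e. a $w$ with
\begin{align*}
\int_\Omega\nabla w\cdot\nabla\phi\,dx=\langle h,\phi|_{\Bound}\rangle\qquad\text{for all }\phi\in H^1(\Omega);
\end{align*}
such $w$ exists and is unique up to locally constant functions, and the right-hand side of the asserted identity does not depend on the choice. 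The one genuinely substantive preliminary — and the only place the hypothesis $\gamma\in W^{1,r}(\Omega)$, $r>n$, enters — is that $\gamma u\in H^1(\Omega)$ with $\nabla(\gamma u)=\gamma\nabla u+u\nabla\gamma$ and $(\gamma u)|_{\Bound}=\gamma|_{\Bound}f$: indeed $\gamma\nabla u\in L^2$ because $\gamma\in L^\infty$, while $u\nabla\gamma\in L^2$ by H\"older since $u\in L^{p}(\Omega)$ with $p=2n/(n-2)$ when $n\geq 3$ (any $p<\infty$ when $n=2$) and $\nabla\gamma\in L^r$ with $1/r<1/n$, and the trace identity holds because $r>n$ forces $\gamma\in C(\bar{\Omega})$.

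Then I would test the weak Neumann formulation for $w$ against $\phi=\gamma u$ to obtain
\begin{align*}
\langle h,\gamma f\rangle=\langle h,(\gamma u)|_{\Bound}\rangle=\int_\Omega\nabla w\cdot\nabla(\gamma u)\,dx=\int_\Omega\gamma\,\nabla w\cdot\nabla u\,dx+\int_\Omega u\,\nabla w\cdot\nabla\gamma\,dx,
\end{align*}
and separately identify the Neumann-to-Dirichlet term. Let $\tilde w\in H^1(\Omega)$ solve the Neumann problem with data $\Lambda_\gamma f$ — solvable since $\langle\Lambda_\gamma f,1\rangle=0$ (take $v\equiv 1$ in the weak definition of $\Lambda_\gamma$) — so that $\mathcal{R}\Lambda_\gamma f=\tilde w|_{\Bound}$. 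Using the weak formulation for $w$ tested against $\tilde w$, then the weak formulation for $\tilde w$ tested against $w$, then the definition of $\Lambda_\gamma$ with $v=w$,
\begin{align*}
\langle h,\mathcal{R}\Lambda_\gamma f\rangle=\langle h,\tilde w|_{\Bound}\rangle=\int_\Omega\nabla w\cdot\nabla\tilde w\,dx=\langle\Lambda_\gamma f,w|_{\Bound}\rangle=\int_\Omega\gamma\,\nabla u\cdot\nabla w\,dx.
\end{align*}
Subtracting the two displays cancels $\int_\Omega\gamma\,\nabla w\cdot\nabla u\,dx$ and leaves exactly $\langle h,(\gamma-\mathcal{R}\Lambda_\gamma)f\rangle=\int_\Omega u\,\nabla w\cdot\nabla\gamma\,dx$, which is the claim.

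The hard part is not the algebra, which is short, but the first paragraph: justifying $\gamma u\in H^1(\Omega)$ and the legitimacy of inserting it as a test function at the reduced regularity $W^{1,r}$ rather than $W^{2,\infty}$; this, together with the (entirely standard, as in the real case) book-keeping for the Neumann-to-Dirichlet map on domains with several boundary components, is where care is needed. Crucially, none of this uses $\gamma$ real: both the pairing $\langle\Lambda_\gamma f,g\rangle$ and the target integral $\int_\Omega u\,\nabla w\cdot\nabla\gamma$ are bilinear, not sesquilinear, so no complex conjugates ever appear and Nachman's argument transfers verbatim.
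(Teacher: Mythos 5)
Your proof is correct, and it is essentially the argument the paper relies on: the paper itself gives no proof of this lemma but defers to Nachman \cite{Nachman2D}, whose derivation is exactly your two applications of the weak formulations (testing the harmonic $w$ against $\gamma u$ and against the Neumann solution with data $\Lambda_\gamma f$), with the observation that everything is bilinear so complex $\gamma$ changes nothing. Your preliminary step justifying $\gamma u\in H^1(\Omega)$ with $\nabla(\gamma u)=\gamma\nabla u+u\nabla\gamma$ and trace $\gamma|_{\Bound}f$ under $\gamma\in W^{1,r}$, $r>n$, is precisely where the stated regularity is used, and the compatibility/normalization remarks for the Neumann problems are handled appropriately.
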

		
		\begin{lemma}
			Let $\Omega$ be a bounded $C^{1,1}$-domain in $\mathbb{R}^n, n\geq 2$. Assume $\gamma$ is in $W^{2,p}(\Omega)$, $p>n/2$ and $\text{Re }\gamma\geq c>0$.
			
			For any $f,\,g\in H^{1/2}(\Bound)$ the identity holds:
			\begin{align*}
			\langle g, \left(2\Lambda_{\gamma}-\Lambda_1\gamma-\gamma\Lambda_1+\frac{\partial\gamma}{\partial\nu}\right)f\rangle = \int_{\Omega} 2v\nabla(u-u_0)\cdot\nabla\gamma+v(2u-u_0)\Delta\gamma\,dx
			\end{align*}
			where $u, u_0, v$ are respectively the $H^1(\Omega)$ solutions of $L_{\gamma}(u)=0, \, \Delta u_0=0\text{ and } \Delta v=0,$ in $\Omega$, with $\left.u\right|_{\Bound}=f, \left.u_0\right|_{\Bound}=f$ and $\left.v\right|_{\Bound}=g$.
		\end{lemma}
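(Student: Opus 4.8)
This is Nachman's boundary-determination identity from \cite{Nachman2D}, and the complex case goes through verbatim, since his argument never uses that $\gamma$ is real beyond $\mathrm{Re}\,\gamma\ge c>0$ and uniqueness of the Dirichlet problem in $H^1(\Omega)$. The plan is to establish the identity first for $\gamma\in C^\infty(\bar\Omega)$, where every integration by parts below is unambiguous, and then extend to $\gamma\in W^{2,p}(\Omega)$, $p>n/2$, by density. For smooth $\gamma$, I would convert each of the four boundary pairings on the left into a volume integral by Green's formula. Write $u_0$, $v$ for the harmonic functions with traces $f$, $g$, and recall that the Laplacian's Dirichlet-to-Neumann map $\Lambda_1$ is symmetric for the bilinear duality pairing $\langle\cdot,\cdot\rangle$ between $H^{-1/2}(\Bound)$ and $H^{1/2}(\Bound)$. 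Then $\langle g,\Lambda_\gamma f\rangle=\int_\Omega \gamma\,\nabla u\cdot\nabla v\,dx$ from the weak formulation of $\Lambda_\gamma$; $\langle g,\gamma\Lambda_1 f\rangle=\int_{\Bound} v\,\gamma\,\partial_\nu u_0\,ds=\int_\Omega(\gamma\,\nabla u_0\cdot\nabla v+v\,\nabla\gamma\cdot\nabla u_0)\,dx$ using $\Delta u_0=0$; $\langle g,\Lambda_1\gamma f\rangle=\langle \gamma f,\Lambda_1 g\rangle=\int_{\Bound}\gamma\,u_0\,\partial_\nu v\,ds=\int_\Omega(u_0\,\nabla\gamma\cdot\nabla v+\gamma\,\nabla u_0\cdot\nabla v)\,dx$ using $\Delta v=0$; and $\langle g,(\partial_\nu\gamma)f\rangle=\int_{\Bound} v\,u_0\,\partial_\nu\gamma\,ds=\int_\Omega(u_0\,\nabla v\cdot\nabla\gamma+v\,\nabla u_0\cdot\nabla\gamma+v\,u_0\,\Delta\gamma)\,dx$.

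Next I would assemble $2\langle g,\Lambda_\gamma f\rangle-\langle g,\Lambda_1\gamma f\rangle-\langle g,\gamma\Lambda_1 f\rangle+\langle g,(\partial_\nu\gamma)f\rangle$. The two terms $\int_\Omega u_0\,\nabla\gamma\cdot\nabla v$ cancel against each other, as do the two terms $\int_\Omega v\,\nabla\gamma\cdot\nabla u_0$; the $\gamma\,\nabla u_0\cdot\nabla v$ contributions combine into $-2\int_\Omega\gamma\,\nabla u_0\cdot\nabla v$, and what survives is $\int_\Omega\bigl(2\gamma\,\nabla(u-u_0)\cdot\nabla v+v\,u_0\,\Delta\gamma\bigr)\,dx$. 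Set $w:=u-u_0\in H^1_0(\Omega)$ (both have trace $f$). One integration by parts, moving the gradient off $w$ and using $\Delta v=0$ together with $w|_{\Bound}=0$, gives $2\int_\Omega\gamma\,\nabla w\cdot\nabla v\,dx=-2\int_\Omega w\,\nabla\gamma\cdot\nabla v\,dx$. Finally, applying the divergence theorem to $\nabla\cdot(2wv\,\nabla\gamma)=2v\,\nabla w\cdot\nabla\gamma+2w\,\nabla v\cdot\nabla\gamma+2wv\,\Delta\gamma$ — whose boundary term again vanishes since $w|_{\Bound}=0$ — allows me to replace $-2\int_\Omega w\,\nabla v\cdot\nabla\gamma$ by $2\int_\Omega v\,\nabla w\cdot\nabla\gamma+2\int_\Omega wv\,\Delta\gamma$; combined with $2u-u_0=2w+u_0$, this rearranges $\int_\Omega(-2w\,\nabla\gamma\cdot\nabla v+v\,u_0\,\Delta\gamma)\,dx$ into exactly the claimed $\int_\Omega\bigl(2v\,\nabla(u-u_0)\cdot\nabla\gamma+v\,(2u-u_0)\,\Delta\gamma\bigr)\,dx$.

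The substantive point is not this computation but the regularity bookkeeping that legitimizes it at the level $\gamma\in W^{2,p}(\Omega)$, $p>n/2$, and this is where the hypotheses get consumed. One must check that $\Lambda_1\gamma$, $\gamma\Lambda_1$ and multiplication by $\partial_\nu\gamma$ are well-defined continuous operators into the dual of $H^{1/2}(\Bound)$ — here the embedding $W^{2,p}(\Omega)\hookrightarrow C^0(\bar\Omega)$ for $p>n/2$ and the trace of $\nabla\gamma\in W^{1,p}(\Omega)$ enter — and that every product appearing in the final identity is integrable, which follows from Hölder's inequality together with the Sobolev embeddings of $H^1(\Omega)$ and $W^{1,p}(\Omega)$; the borderline for $v\,u_0\,\Delta\gamma\in L^1(\Omega)$ is precisely $p=n/2$, which is why the hypothesis is strict. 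Accordingly I would pick $\gamma_k\in C^\infty(\bar\Omega)$ with $\gamma_k\to\gamma$ in $W^{2,p}(\Omega)$ (so also uniformly, hence $\mathrm{Re}\,\gamma_k\ge c'>0$ eventually), apply the smooth identity to each $\gamma_k$, and pass to the limit, using continuity of every term — including the convergence $u_{\gamma_k}\to u$ in $H^1(\Omega)$, hence $w_k:=u_{\gamma_k}-u_0\to w$, which rests on $\mathrm{Re}\,\gamma\ge c>0$ and the continuous dependence of the Dirichlet solution on the coefficient. The main obstacle is thus organizing this limiting argument so that all terms, and especially the boundary operators built from $\gamma$, converge at the stated threshold; the identity itself is just a careful bookkeeping of Green's formula.
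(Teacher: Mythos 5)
Your computation is correct, and I checked the assembly: the four Green's-formula conversions, the cancellation of the $u_0\,\nabla\gamma\cdot\nabla v$ and $v\,\nabla\gamma\cdot\nabla u_0$ pairs, the surviving term $\int_\Omega\bigl(2\gamma\,\nabla(u-u_0)\cdot\nabla v+v\,u_0\,\Delta\gamma\bigr)dx$, and the two final integrations by parts with $w=u-u_0\in H^1_0(\Omega)$ all go through and reproduce the stated right-hand side. The comparison with the paper is a bit asymmetric: the paper gives no proof of this lemma at all — it states it and defers to Nachman \cite{Nachman2D}, with the remark that his argument never uses realness of $\gamma$ beyond $\mathrm{Re}\,\gamma\ge c>0$ and well-posedness of the Dirichlet problem. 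Your write-up essentially supplies the omitted argument (it is the same Green's-identity mechanism Nachman uses), organized as a smooth-$\gamma$ computation plus a density passage in $W^{2,p}$; what this buys is that the independence from realness becomes visible line by line, rather than being asserted, and the role of $\mathrm{Re}\,\gamma\ge c>0$ is isolated in the solvability of $L_\gamma u=0$ and the continuous dependence $u_{\gamma_k}\to u$ in $H^1(\Omega)$.

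Two small points of bookkeeping. First, your claim that the strict inequality $p>n/2$ is "precisely" the borderline for $v\,u_0\,\Delta\gamma\in L^1(\Omega)$ is slightly off: by $H^1\hookrightarrow L^{2n/(n-2)}$ and H\"older, $p=n/2$ already suffices for that term. The strict inequality is consumed elsewhere — in $W^{2,p}(\Omega)\hookrightarrow C^0(\bar\Omega)$, in the fact that $\gamma|_{\Bound}$ must be a pointwise multiplier of $H^{1/2}(\Bound)$ so that $\Lambda_1\gamma f$ and the pairing $\langle \gamma g,\Lambda_1 f\rangle$ make sense for $f,g\in H^{1/2}(\Bound)$ only, and in giving $\partial_\nu\gamma$ enough boundary integrability to pair with the product of two $H^{1/2}$ traces. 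Second, for merely $H^{1/2}$ data the "boundary integrals" in your four identities are really $H^{-1/2}$–$H^{1/2}$ dualities; either justify each Green's formula directly from the weak formulations, or note that both sides of the identity are continuous in $(f,g)\in H^{1/2}\times H^{1/2}$ and reduce to smooth $f,g$, in the same way you reduce to smooth $\gamma$. Neither point affects the correctness of your argument.
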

		
		From this we obtain the boundary reconstruction formulas.
		
		\begin{theorem}
			Let $\Omega$ be a bounded $C^{1,1}$-domain in $\mathbb{R}^n, n\geq 2$. Suppose $\gamma\in W^{1,r}(\Omega), \,r>n$ and $\text{Re }\gamma\geq c>0$.
			\begin{enumerate}[label=(\roman*)]
				\item $\left.\gamma\right|_{\Bound\cap U}$ can be recovered from $\Lambda_{\gamma}$ by:
				\begin{align}
				\langle h, \gamma f\rangle = \lim_{\substack{|\eta|\rightarrow \infty\\ \eta\in\mathbb{R}^{n-1}\times\{0\}}} \langle h_{\eta}, \mathcal{R}\Lambda_{\gamma} e^{-i\langle\cdot,\eta\rangle }f\rangle,
				\end{align}
				with $f\in H^{1/2}(\Bound)\cap C(\Bound)$ and $h\in L^2(\Omega)$ supported in $U\cap\Bound$ and $h_{\eta}$ is defined as zero outside $\Bound\cap U$ and:
				$$ h_{\eta}(x) = h(x)e^{-ix\cdot\eta} - \frac{1}{|\Bound\cap U|} \int_{\Bound\cap U} h(y)e^{-iy\cdot\eta}\, dy,\; \text{ for } x\in\Bound\cap U.$$
				
				\item If $\gamma\in W^{2,r}, \,r>n/2$, then for any continuous function $f, g$ in $H^{1/2}(\Bound)$ with support in $\Bound\cap \Bound$ holds
				
				\begin{align}
				\langle g, \frac{\partial\gamma}{\partial\nu}f\rangle = \lim_{\substack{|\eta|\rightarrow \infty\\ \eta\in\mathbb{R}^{n-1}\times\{0\}}} \langle g, e^{-i\langle \cdot,\eta\rangle}\left(\gamma\Lambda_1+\Lambda_1\gamma-2\Lambda_{\gamma}\right)e^{i\langle\cdot,\eta\rangle}f\rangle.\\
				\nonumber
				\end{align}
			\end{enumerate}
		\end{theorem}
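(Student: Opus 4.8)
The plan is to obtain both reconstruction formulas directly from the two integral identities stated just above, by inserting highly oscillatory boundary data. The first observation is that neither identity used that $\gamma$ is real: each is proved only from the well-posedness in $H^1(\Omega)$ of the forward problems $\nabla\cdot(\gamma\nabla u)=0$ and $\Delta u=0$, together with Green's formula, and that well-posedness persists verbatim once $\mathrm{Re}\,\gamma\geq c>0$, since then $0$ is not a Dirichlet eigenvalue (as already noted for \eqref{EIT}). So the two lemmas hold as stated for complex $\gamma\in W^{1,r}(\Omega)$, resp.\ $W^{2,r}(\Omega)$, with $\mathrm{Re}\,\gamma\geq c>0$, and the whole theorem reduces to an asymptotic analysis: insert the oscillatory data and show the resulting volume terms vanish as $|\eta|\to\infty$.

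For part (i), substitute $f\mapsto e^{-i\langle\cdot,\eta\rangle}f$ and $h\mapsto h_\eta$ into the first identity, with $\eta$ tangential to $\Bound\cap U$ in suitable local coordinates. The mean-zero correction built into $h_\eta$ does two things: it places $h_\eta$ in $\mathring{H}^{-1/2}(\Bound)$ so the identity applies, and it removes the constant contribution from the boundary pairing $\langle h_\eta,\gamma\,e^{-i\langle\cdot,\eta\rangle}f\rangle$. Since the oscillation in $h_\eta$ is conjugate to the one imposed on $f$, that pairing splits into a phase-cancelling piece equal to $\langle h,\gamma f\rangle$ plus a residual oscillatory piece which tends to $0$ by Riemann--Lebesgue, using that $\gamma$ and $f$ are continuous on the patch; the remaining left-hand term is exactly $\langle h_\eta,\mathcal{R}\Lambda_\gamma e^{-i\langle\cdot,\eta\rangle}f\rangle$. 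Thus the formula follows once the right-hand side $\int_\Omega u_\eta\,\nabla w_\eta\cdot\nabla\gamma\,dx$ is shown to vanish, where $u_\eta$ solves $\nabla\cdot(\gamma\nabla u_\eta)=0$ with data $e^{-i\langle\cdot,\eta\rangle}f$ and $w_\eta$ is harmonic with Neumann data $h_\eta$. Part (ii) is organized identically from the second lemma: conjugating $\gamma\Lambda_1+\Lambda_1\gamma-2\Lambda_\gamma$ by $e^{\mp i\langle\cdot,\eta\rangle}$ amounts to using Dirichlet data $e^{i\langle\cdot,\eta\rangle}f$ on the solution side and $e^{-i\langle\cdot,\eta\rangle}g$ on the test side; the $\langle g,\tfrac{\partial\gamma}{\partial\nu}f\rangle$ term survives and the error is the volume integral $\int_\Omega \bigl[2v_\eta\nabla(u_\eta-u_0^\eta)\cdot\nabla\gamma+v_\eta(2u_\eta-u_0^\eta)\Delta\gamma\bigr]\,dx$, where $u_0^\eta$ and $v_\eta$ are the harmonic functions with the indicated data.

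The substance --- and the step I expect to be the main obstacle --- is showing these volume remainders vanish. The mechanism is the boundary-layer concentration of oscillatory solutions: the harmonic extension of data oscillating tangentially at frequency $\sim|\eta|$ decays like $e^{-c|\eta|\,\operatorname{dist}(x,\Bound)}$, hence is exponentially small outside a layer of width $\sim|\eta|^{-1}$ and oscillates tangentially inside it, while the $\gamma$-solutions differ from the corresponding harmonic extensions by $H^1$-correctors controlled through the elliptic regularity of $\nabla\gamma\cdot\nabla(\text{harmonic profile})$ using $\nabla\gamma\in L^r$, $r>n$, resp.\ $\Delta\gamma\in L^r$, $r>n/2$. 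Splitting each volume integral into a bulk part and a layer part, the bulk part vanishes by the exponential decay and the layer part vanishes either by tangential oscillation against $\nabla\gamma$ or, more crudely, by absolute continuity of $\int|\nabla\gamma|^r$ over the shrinking layer together with Hölder's inequality --- legitimate precisely because $r>n$ keeps the Sobolev conjugate on the solution side strictly below $2^\ast=\tfrac{2n}{n-2}$; the $v_\eta(2u_\eta-u_0^\eta)\Delta\gamma$ term is treated the same way with $\Delta\gamma\in L^r$, $r>n/2$. This is Nachman's boundary-determination analysis from \cite{Nachman2D}, and the only thing to verify for complex $\gamma$ is, once more, that the energy and regularity estimates there use only $|\gamma|\leq C$ and $\mathrm{Re}\,\gamma\geq c>0$, both available here. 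Finally, with $\gamma|_{\Bound}$ and $\tfrac{\partial\gamma}{\partial\nu}\big|_{\Bound}$ in hand, the identity $\Lambda_q=\gamma^{-1/2}\bigl[\Lambda_\gamma+\tfrac12\tfrac{\partial\gamma}{\partial\nu}\bigr]\gamma^{-1/2}$ recovers $\Lambda_q$ from $\Lambda_\gamma$, completing the reduction to the Schrödinger inverse problem.
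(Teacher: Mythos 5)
Your proposal follows essentially the same route as the paper: the paper gives no independent proof here, but simply states the two integral identities and invokes Nachman's boundary-determination argument from \cite{Nachman2D}, observing that nothing in that argument uses the reality of $\gamma$ --- only $\mathrm{Re}\,\gamma\geq c>0$ (hence well-posedness of the Dirichlet problems) and the boundedness of $\gamma$ --- which is exactly the reduction you make before sketching Nachman's oscillatory-data asymptotics. Your added detail on why the volume remainders vanish (boundary-layer concentration, Riemann--Lebesgue, H\"older with $r>n$, resp.\ $r>n/2$) is consistent with the cited argument, so the proposal is correct and matches the paper's approach, only more explicit.
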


		\textit{ Acknowledgements.} Ivan Pombo was supported by the Portuguese funds through the CIDMA-Center for Research and Development in Mathematics and Applications and the Portuguese Foundation for Science and Technology (Fundação para a Ciência e a Tecnologia [FCT]), within Projects UIDB/04106/2020 and UIDP/04106/2020, and by the FCT PhD Grant SFRH/BD/143523/2019.

	\end{document}